\newtheorem{theorem}{Theorem}[section]
\newtheorem{lemma}[theorem]{Lemma}
\newtheorem{definition}[theorem]{Definition}
\newtheorem{proposition}[theorem]{Proposition}
\newtheorem{remark}[theorem]{Remark}
\newtheorem{example}[theorem]{Example}
\newtheorem{question}[theorem]{Question}
\newtheorem*{proposition*}{Proposition}
\numberwithin{equation}{section}
\numberwithin{figure}{section}
\DeclareMathOperator{\sech}{sech}
\def\intave#1{\int_{#1}\hbox{\llap{$\raise2.3pt\hbox{\vrule
height.9pt width7pt}\phantom{\scriptstyle{#1}}\mkern-2mu$}}}
\begin{document}
\title{Some geometric inequalities related to Liouville equation}

\author{Changfeng Gui}
\address{Changfeng Gui, The University of Texas at San Antonio, TX, USA.
}
\email{changfeng.gui@utsa.edu}
\author{Qinfeng Li}
\address{Qinfeng Li, Hunan University, Changsha, Hunan, China.
}
\email{liqinfeng1989@gmail.com}

\begin{abstract}
In this paper, we prove that if $u$ is a solution to the Liouville equation \begin{align}
\label{scalliouville}
\Delta u+e^{2u} =0 \quad \mbox{in $\mathbb{R}^2$,}
\end{align}then the diameter of $\mathbb{R}^2$ under the conformal metric $g=e^{2u}\delta$ is bounded below by $\pi$. Here $\delta$ is the Euclidean metric in $\mathbb{R}^2$. Moreover, we explicitly construct a family of solutions to \eqref{scalliouville} such that the corresponding diameters of $\mathbb{R}^2$ range over $[\pi,2\pi)$.  

We also discuss supersolutions to \eqref{scalliouville}. We show that if $u$ is a supersolution to \eqref{scalliouville} and $\int_{\mathbb{R}^2} e^{2u} dx<\infty$, then the diameter of $\mathbb{R}^2$ under the metric $e^{2u}\delta$ is less than or equal to $2\pi$.

For radial supersolutions to \eqref{scalliouville}, we use both analytical and geometric approaches to prove some inequalities involving conformal lengths and areas of disks in $\mathbb{R}^2$. We also discuss the connection of the above results with the sphere covering inequality in the case of Gaussian curvature bounded below by $1$.

Higher dimensional generalizations are also discussed.
\end{abstract}
\maketitle

\vskip0.2in

{\bf Key words}: Isoperimetric inequality, Liouville equation, Gaussian curvature, Conformal metrics 

{\bf 2010 AMS subject classification}: 35J15, 35J60, 53A05, 30C55

\section{Introduction}

In this paper, we study some geometric properties of the conformally flat Riemannian manifold $(M,g)=(\mathbb{R}^2, e^{2u}\delta)$, where $u$ being considered is either a solution or a supersolution to \eqref{scalliouville} and $\delta$ is denoted as the Euclidean metric. By $u$ being a supersolution to \eqref{scalliouville}, we mean that 
\begin{align}
    \label{ricci1}
\Delta u+e^{2u} \le 0   \quad \mbox{in $\mathbb{R}^2$.}
\end{align} 
The research in this paper consists of the following parts.

\subsection{Diameter estimates and examples for solutions to \eqref{scalliouville}}
It is well known that if $u$ is a solution to \eqref{scalliouville}, then either $vol_g(\mathbb{R}^2)=4\pi$ or $vol_g(\mathbb{R}^2)=\infty$, where $g=e^{2u}\delta$ and $vol_g$ denotes the volume function under the meric $g$. In the former case, $u$ must be a radial solution up to translations. This was first proved in the seminal paper by Chen-Li\cite{CL91} using the  moving plane method, and later by Chou-Wan\cite{CW94} using Liouville's formula (see \cite{Liouville}): any entire solution $u$ to \eqref{scalliouville} must be of the form
\begin{align}
    \label{complexrepresentation}
u(x,y)=\ln \frac{2|f'(z)|}{1+|f(z)|^2},  
\end{align}
where $f$ is a meromorphic function in the complex plane, $f$ has only simple poles and $f'$ is nonvanishing. Such $f$ is called the \textit{developing function} of solution $u$ to \eqref{scalliouville}.   In particular,  if $f(z)=\lambda z$ for some  positive constant $\lambda$, i.e., 
\begin{align}
\label{bubble}
u(x,y)=\ln \frac{2 \lambda}{1+\lambda^2 |z|^2}, 
\end{align}
then we call them the \textit{standard bubble solutions}, for which  $g= e^{2u} \delta$ represent the scaled metrics of the standard metric on the unit sphere after identifying the sphere with the complex plane
compactified at infinity.

Motivated by the above results, we are interested in studying the diameter of $\mathbb{R}^2$ under the conformal metric $g=e^{2u}\delta$ for $u$ being a solution to \eqref{scalliouville}. Throughout this paper we use $diam_g(\mathbb{R}^2)$ to denote the diameter of $\mathbb{R}^2$ under metric $g$.

We first prove the following diameter lower bound estimate in Section 2.
\begin{theorem}
\label{lowerbounddiameterfor2D}
Let $u$ be a solution to \eqref{scalliouville} in $\mathbb{R}^2$, then $diam(\mathbb{R}^2) \ge \pi$ under the metric $g=e^{2u}\delta$.
\end{theorem}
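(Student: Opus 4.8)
The starting point is that the conformal metric $g=e^{2u}\delta$ has constant Gaussian curvature $K_g\equiv 1$. Indeed $K_g=-e^{-2u}\Delta u$, and substituting $\Delta u=-e^{2u}$ from \eqref{scalliouville} gives $K_g=1$. Thus $(\mathbb{R}^2,g)$ is a (generally incomplete) surface of constant curvature $1$, to be compared with the round unit sphere $(\mathbb{S}^2,g_0)$, on which the distance between antipodal points equals exactly $\pi$. The plan is to make this comparison explicit through the developing function supplied by Liouville's formula and then to transport an antipodal distance from $\mathbb{S}^2$ back down to $\mathbb{R}^2$.

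Concretely, I would write the round metric in stereographic coordinates as $g_0=\frac{4}{(1+|w|^2)^2}|dw|^2$ and verify by direct computation that $g=f^{*}g_0$, where $f$ is the meromorphic developing function of \eqref{complexrepresentation}. Because $f'$ is nonvanishing and all poles of $f$ are simple, $e^u$ stays finite and nonzero at the poles, so $g$ extends smoothly across them and $f$ extends to a holomorphic local diffeomorphism $f:\mathbb{R}^2\to\mathbb{S}^2=\mathbb{C}\cup\{\infty\}$ (a pole being mapped to $\infty$). Consequently $f$ is a \emph{local isometry} from $(\mathbb{R}^2,g)$ into $(\mathbb{S}^2,g_0)$. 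The elementary but crucial consequence is that a local isometry does not increase distance downstairs: for any $p,q\in\mathbb{R}^2$ each $g$-path joining them maps to a $g_0$-path of equal length joining $f(p)$ and $f(q)$, whence $d_{\mathbb{S}^2}(f(p),f(q))\le d_g(p,q)$.

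It then suffices to exhibit two points of $\mathbb{R}^2$ whose $f$-images are antipodal on $\mathbb{S}^2$. Here I invoke the little Picard theorem: being a non-constant meromorphic function on $\mathbb{C}$, $f$ omits at most two values $\{a,b\}\subset\mathbb{S}^2$, so its image contains $\mathbb{S}^2\setminus\{a,b\}$. Choosing any $p\in\mathbb{S}^2\setminus\{a,b,-a,-b\}$---a nonempty set, since we discard at most four points from the sphere---both $p$ and its antipode $-p$ lie in the image of $f$. Picking preimages $P,Q\in\mathbb{R}^2$ we obtain $d_g(P,Q)\ge d_{\mathbb{S}^2}(p,-p)=\pi$, so that $diam_g(\mathbb{R}^2)\ge\pi$.

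The substantive step is the identification $g=f^{*}g_0$ together with the fact that $f$ is a genuine local isometry onto an almost-full subset of the sphere; once this is in place the lower bound is forced by the distance-nonincrease property and Picard's omitted-value count. The only point demanding care is the behaviour at the poles of $f$, where one must check that $g$ extends smoothly and that $f$ remains a local diffeomorphism taking the value $\infty$; this is immediate from the simple-pole hypothesis. Finally, for the standard bubbles $f(z)=\lambda z$ the image omits only $\infty$, and the value $\pi$ is realized (e.g.\ by the antipodal pair $\pm 1/\lambda$), confirming that the estimate is sharp.
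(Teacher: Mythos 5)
Your proof is correct, but it takes a somewhat different route from the paper's own proof of this theorem. The paper splits into two cases via the developing function: for rational (hence M\"obius) $f$ it identifies $g$ with the round metric directly, and for transcendental $f$ it picks, for each $\epsilon>0$, points $z_1,z_2$ with $|f(z_1)|<\epsilon$ and $|f(z_2)|>1/\epsilon$ and estimates the length integral of any connecting curve from below by $2\bigl(\tan^{-1}(1/\epsilon)-\tan^{-1}(\epsilon)\bigr)$, obtaining $\pi$ only in the limit $\epsilon\to 0$; so that argument exhibits pairs of points at distance $\ge \pi-o(1)$ rather than an actual pair at distance $\ge\pi$. You instead observe once and for all that $g=f^{*}g_0$ makes $f$ a distance-nonincreasing local isometry into $(\mathbb{S}^2,g_0)$, and then use the omitted-value count (at most two omitted values, by Picard for transcendental $f$, and one for M\"obius $f$) to find a point $p$ with both $p$ and its antipode in the image, yielding an explicit pair $P,Q$ with $d_g(P,Q)\ge\pi$. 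This is essentially the mechanism of the paper's own strengthening, Proposition \ref{manypoints}, which likewise pulls back antipodal pairs through the stereographic projection; your version handles the M\"obius and transcendental cases uniformly and produces the sharp bound at genuine points rather than as a supremum, while the paper's $\epsilon$-argument for Theorem \ref{lowerbounddiameterfor2D} itself avoids having to locate exact antipodal preimages. The only points needing care --- smooth extension of $g$ and local invertibility of $f$ across the simple poles, and the nonemptiness of $\mathbb{S}^2$ minus at most four points --- are all handled correctly in your write-up.
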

We also prove a stronger version, see Proposition \ref{manypoints} in the section below, which indicates that there are uncountably many pairs of points such that the conformal distances between any of the pairs are bigger than or equal to $\pi$. 

Next, we  construct a family of solutions to \eqref{scalliouville} such that the corresponding diameters can be attained at any value in $[\pi,2\pi)$.. 

More precisely, we show:
\begin{proposition}
\label{utdiamter}
Let $u_t$ be a family of functions given as  \begin{align}
\label{u_t}
    u_t(x,y)=\frac{2e^x}{1+t^2+2te^x \cos y+e^{2x}},
\end{align}
then for each $t \ge 0$, $u_t(x,y)$ solves \eqref{scalliouville}. Moreover, $diam_g(\mathbb{R}^2) = \pi+2\tan^{-1}(t)$, where $g=e^{2u_t}\delta$. 
\end{proposition}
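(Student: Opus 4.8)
The plan is to read off the developing function, recognise the metric as a pullback of the round sphere, and then turn the diameter into a problem about a twice-punctured sphere. Take $f(z)=e^{z}+t$ with $z=x+iy$; then $f'(z)=e^{z}$ is nonvanishing and $f$ has no poles, so by Liouville's formula \eqref{complexrepresentation} the function defined by $e^{u_t}=\tfrac{2|f'(z)|}{1+|f(z)|^{2}}$ solves \eqref{scalliouville}. Since $|f'(z)|=e^{x}$ and $1+|f(z)|^{2}=1+t^{2}+2te^{x}\cos y+e^{2x}$, this conformal factor is exactly the displayed expression, settling the first assertion. Geometrically, $g=e^{2u_t}\delta$ is the pullback under $f$ of the round curvature-$1$ metric $\tfrac{4|dw|^{2}}{(1+|w|^{2})^{2}}$ on $\mathbb{S}^{2}=\mathbb{C}\cup\{\infty\}$, and because $f$ is the universal covering of $\mathbb{C}\setminus\{t\}=\mathbb{S}^{2}\setminus\{P_{1},P_{2}\}$ with $P_{1}=t,\ P_{2}=\infty$, the map $f$ is a Riemannian covering onto the twice-punctured round sphere. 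A stereographic computation gives $\rho:=d_{\mathbb{S}^{2}}(P_{1},P_{2})=\arccos\tfrac{t^{2}-1}{t^{2}+1}=\pi-2\tan^{-1}t$, so it suffices to show that this cover has diameter $2\pi-\rho$. The value is anchored by an explicit geodesic: the reflections $y\mapsto -y$ and $x\mapsto \ln(1+t^{2})-x$ are isometries of $g$, whence $\{y=\pi\}$ is a geodesic running between the two ends, and along it the conformal factor is $\tfrac{2e^{x}}{(e^{x}-t)^{2}+1}$, so with $s=e^{x}$ its length is $\int_{0}^{\infty}\tfrac{2\,ds}{(s-t)^{2}+1}=\pi+2\tan^{-1}t=2\pi-\rho$.

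For the upper bound I would use only that $f$ is distance nonincreasing together with the fact that near a puncture one may wind at arbitrarily small cost, so that each end $e_{i}$ (the single ideal point over $P_{i}$) satisfies $d_{g}(p,e_{i})=d_{\mathbb{S}^{2}}(f(p),P_{i})$. Pushing a connecting path over $P_{i}$ then yields, for all $p,q$,
\begin{equation*}
d_{g}(p,q)\le \min_{i=1,2}\bigl(d_{\mathbb{S}^{2}}(f(p),P_{i})+d_{\mathbb{S}^{2}}(f(q),P_{i})\bigr).
\end{equation*}
Writing $P_{2}^{*}$ for the antipode of $P_{2}$ and using $d_{\mathbb{S}^{2}}(X,P_{2})=\pi-d_{\mathbb{S}^{2}}(X,P_{2}^{*})$, the triangle inequality gives $d_{\mathbb{S}^{2}}(X,P_{1})+d_{\mathbb{S}^{2}}(X,P_{2})\le \pi+d_{\mathbb{S}^{2}}(P_{1},P_{2}^{*})=2\pi-\rho$ for every $X$; averaging the two terms in the minimum then bounds the right-hand side by $2\pi-\rho$, so $diam_{g}(\mathbb{R}^{2})\le 2\pi-\rho$.

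For the matching lower bound I would exhibit a realising pair. Let $X_{0}=f(\tfrac12\ln(1+t^{2})+i\pi)=t-\sqrt{1+t^{2}}$, the midpoint of the long geodesic arc $A_{\pi}$ joining $P_{1}$ and $P_{2}$; it lies on the great circle through the punctures with $d_{\mathbb{S}^{2}}(X_{0},P_{1})=d_{\mathbb{S}^{2}}(X_{0},P_{2})=\pi-\tfrac{\rho}{2}$, and since the nearest points of the minimizing arc $A_{0}$ to $X_{0}$ are its endpoints one gets $d_{\mathbb{S}^{2}}(X_{0},A_{0})=\pi-\tfrac{\rho}{2}$. Choose a lift $p^{*}$ of $X_{0}$ and let $q^{*}=p^{*}+(0,2\pi)$ be the adjacent deck translate. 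Any path from $p^{*}$ to $q^{*}$ projects to a loop based at $X_{0}$ separating $P_{1}$ from $P_{2}$, hence meeting $A_{0}$ at some $Z$; splitting the loop at $X_{0}$ and $Z$ gives two arcs, each of length at least $d_{\mathbb{S}^{2}}(X_{0},Z)\ge d_{\mathbb{S}^{2}}(X_{0},A_{0})=\pi-\tfrac{\rho}{2}$, so the loop has length at least $2\pi-\rho$. Thus $d_{g}(p^{*},q^{*})\ge 2\pi-\rho$, and with the upper bound $diam_{g}(\mathbb{R}^{2})=2\pi-\rho=\pi+2\tan^{-1}t$.

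The main obstacle is the lower bound: showing rigorously that the shortest essential loop based at the ``far'' point $X_{0}$ has length exactly $2\pi-\rho$. The crossing argument through the separating arc $A_{0}$ is the crux, and one must also justify carefully the free-winding inequality used for the upper bound, namely that winding tightly around a puncture contributes negligible length and that each end is a single ideal point at finite distance. Both points rest on analysing paths in the noncompact universal cover near the removed punctures, rather than on any compactness or attainment argument, and this is where the care is needed.
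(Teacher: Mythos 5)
Your proposal is correct, and its backbone --- reading off the developing function $f(z)=e^{z}+t$ and treating $f$ as a local isometry onto the round sphere punctured at $\Pi^{-1}(t)$ and the north pole --- is exactly the paper's strategy (note that both you and the paper read \eqref{u_t} as defining $e^{u_t}$, i.e.\ $u_t$ is the logarithm of the displayed expression). Your lower bound is essentially the paper's argument in topological clothing: the paper also takes the base point with $f$-image $-\tan(\tfrac{\pi}{4}-\tfrac12\tan^{-1}t)=t-\sqrt{1+t^{2}}$ and its deck translate $(a,\pm\pi)$, and forces the projected loop to meet the short arc $A_{0}$ (the image of the $x$-axis under $\Pi^{-1}\circ f$) by the intermediate value theorem on the $y$-coordinate rather than by your $\pi_{1}$/separation argument; the estimate $d_{S^{2}}(X_{0},A_{0})=\pi-\rho/2$ and the doubling are then identical. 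Where you genuinely diverge is the upper bound: the paper computes $\sup_{y}\int_{-\infty}^{\infty}e^{u_t(x,y)}\,dx=\pi+2\tan^{-1}t$ and closes a rectangle with vertical segments of vanishing conformal length at $x=\pm\infty$, whereas you show each end is a single ideal point with $d_{g}(p,e_{i})=d_{S^{2}}(f(p),P_{i})$ and then apply the spherical identity $d(X,P_{2})=\pi-d(X,P_{2}^{*})$ plus the triangle inequality to get $\min_{i}\bigl(d(f(p),P_{i})+d(f(q),P_{i})\bigr)\le 2\pi-\rho$. Your route is more conceptual and explains \emph{why} the answer is $2\pi-d_{S^{2}}(P_{1},P_{2})$ (it generalizes to any Riemannian covering of a twice-punctured sphere), at the cost of having to justify carefully the ideal-point/free-winding facts you flag; the paper's rectangle construction is more computational but needs nothing beyond the explicit horizontal integral and the decay of $e^{u_t}$ on vertical lines. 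Both the delicate points you identify are real but handleable exactly as you indicate, so I see no gap.
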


One can see from the above proposition that if $t$ ranges over $[0,\infty)$, then the diameters of $\mathbb{R}^2$ corresponding to $u_t$ ranges over $[\pi,2\pi)$. This is somehow interesting, since unlike that the range of conformal volume is discrete regarding solutions to \eqref{scalliouville}, it turns out that the range of conformal diameter contain an interval.


\begin{remark}
Actually in our forthcoming paper \cite{EGLX}, we can show if a solution $u$ is bounded from above, then up to translation, rotation and scaling, either $u$ is radial, or $u$ is given by \eqref{u_t}. Hence we essentially proved when $u$ has an upper bound, the range of diameter of $\mathbb{R}^2$ under $e^{2u}\delta$ is $[\pi,2\pi)$.
\end{remark}

We also construct a solution in Example \ref{2pi}, where we show that the corresponding conformal diameter can be greater than or equal to $2\pi$. 

The difficulty of exactly computing the conformal diameters lies in two aspects. First, generally given two points in $\mathbb{R}^2$ and a solution $u$, there is no standard way to compute the conformal distance between the two points under the metric $g=e^{2u}\delta$, since there are infinitely many paths connecting them. Second, generally it is not clear how to choose pairs of points such that their conformal distances are approximating the conformal diameter of $\mathbb{R}^2$. The method we use to prove Proposition \ref{utdiamter} and illustrate Example \ref{2pi} is by finding the link between the conformal metrics with respect to solutions to \eqref{scalliouville} and the standard metric on sphere, and we also employ some complex analysis ideas to carefully proceed the argument.
\hfill\\
\subsection{Diameter estimates for general supersolutions}
All the above results are obtained using complex function theory. However, from a geometric point of view, \eqref{scalliouville} is equivalent to $K =1$, where $K$ is the Gaussian curvature of $(\mathbb{R}^2,g)$, where $g=e^{2u}\delta$. Similarly, when considering supersolutions, \eqref{ricci1} is equivalent to $K \ge 1$. Note that \eqref{ricci1} is also equivalent to $Ric_g \ge g$, where $Ric_g$ is the Ricci curvature of $(\mathbb{R}^2, g)$. This naturally reminds us of Myer's Theorem in Riemannian geometry.

Recall that Meyer's Theorem says that if $(M,g)$ is a complete $n$-dimensional manifold such that $Ric_g \ge (n-1)g$, then $diam_g(M) \le \pi$. This is not true for incomplete manifold, and Proposition \ref{utdiamter} actually serves as a counterexample since in such case $Ric_g=(2-1)g$ while $diam_g(\mathbb{R}^2)>\pi$ if $t>0$.

Even though nothing can be said in general on diameter bound for incomplete Riemannian manifolds, surprisingly, we can prove that if $g$ is a globally conformally flat metric in $\mathbb{R}^2$ with $Ric_g \ge g$ and $vol_g(\mathbb{R}^2)<\infty$, then $diam_g(\mathbb{R}^2) \le 2\pi$. We state this result in the following theorem in PDE language:
\begin{theorem}
\label{weakbounddiamter}
Let $u$ satisfy \eqref{ricci1}. If we also assume that $\int_{\mathbb{R}^2}e^{2u} dx<\infty$, then $diam_g(\mathbb{R}^2) \le 2\pi$, where $g=e^{2u}\delta$.
\end{theorem}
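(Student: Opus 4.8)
The plan is to pass to the geometric formulation and argue by a Bonnet--Myers type comparison adapted to the incomplete end. Since \eqref{ricci1} is equivalent to $K=-e^{-2u}\Delta u\ge 1$ for the Gaussian curvature of $g=e^{2u}\delta$, and $\int_{\mathbb{R}^2}e^{2u}\,dx=\mathrm{vol}_g(\mathbb{R}^2)<\infty$, we are dealing with a smooth, finite-area surface with $K\ge 1$ that fails to be complete only at its single end (Euclidean infinity). Myers' theorem would give the bound $\pi$ were the surface complete; the extra factor of two reflects that a length-minimizing path between two points may be forced to exit through the end rather than stay inside $\mathbb{R}^2$. Accordingly, I would prove two facts: (a) from every point $p$ the end lies at $g$-distance at most $\pi$, and (b) the end is a single point of the metric completion; then for any $p,q$ the triangle inequality through that point yields $d_g(p,q)\le \pi+\pi=2\pi$.

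For (a) I would run an ODE comparison for the geodesic balls centered at $p$. Write $A(s)=\mathrm{area}_g\big(B_g(p,s)\big)$ and $\ell(s)=\mathrm{length}_g\big(\partial B_g(p,s)\big)$, so that $A'(s)=\ell(s)$ by the coarea formula. Computing the first variation of length and applying Gauss--Bonnet on the topological disk $B_g(p,s)$ gives $\ell'(s)=2\pi-\int_{B_g(p,s)}K\,dA_g\le 2\pi-A(s)$, where the inequality uses $K\ge 1$. Hence $A''+A\le 2\pi$ with $A(0)=A'(0)=0$. Comparing with the solution $2\pi(1-\cos s)$ of the corresponding equality problem — concretely, the function $h(s)=A(s)-2\pi(1-\cos s)$ satisfies $\big(h'\sin s-h\cos s\big)'=(h''+h)\sin s\le 0$ on $(0,\pi)$, so $h/\sin s$ is nonincreasing and tends to $0$ as $s\to 0^+$ — yields $A(s)\le 2\pi(1-\cos s)$ and therefore $\ell(s)\le 2\pi\sin s\to 0$ as $s\to\pi^-$. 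Thus the geodesic circles about $p$ collapse by radius $\pi$ at the latest, which is exactly the statement that the end sits at distance $\le \pi$ from $p$. The same collapse shows that any two sequences running off to infinity can be joined by geodesic circles of vanishing length, so they are Cauchy-equivalent and the completion $\widehat{\mathbb{R}^2}$ adds a single boundary point $\xi$; since $d_g(p,\xi)\le\pi$ for every $p$, we conclude $d_g(p,q)\le d_g(p,\xi)+d_g(\xi,q)\le 2\pi$.

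The main obstacle is making the comparison in step (a) legitimate in the presence of a cut locus and of the incomplete end: the identity $A'=\ell$ and the Gauss--Bonnet computation of $\ell'$ must be justified only up to the cut distance in each direction, one must check that the cut locus enters with the favorable sign (so that $\ell'\le 2\pi-A$ survives), and that the geodesic balls remain topological disks until they collapse. This is the standard but delicate point in Bonnet--Myers type arguments, and I would handle it by working within the star-shaped domain $\exp_p\{0\le s<\rho(\theta)\}$ up to the cut/conjugate radius $\rho(\theta)$, using Jacobi-field comparison ($K\ge 1$ forces a conjugate point by parameter $\pi$ whenever the radial geodesic remains in $\mathbb{R}^2$ that long) to see directly that $\rho(\theta)\le\pi$ in every direction. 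The finiteness of the area is what guarantees that this collapse is the closing-up of the genuine end at infinity rather than an interior degeneration, and that the boundary point $\xi$ of the completion is attained at finite distance.
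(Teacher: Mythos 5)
Your high-level architecture --- show that the end lies within $g$-distance $\pi$ of every point, then bridge any two escape routes near the end --- is exactly the paper's (Proposition \ref{diametercontrolinfty} plus the bridging step in the proof of Theorem \ref{weakbounddiamter}), but both halves of your execution have genuine gaps. For step (a), the identity $\ell'(s)=2\pi-\int_{B_g(p,s)}K\,dA_g$ and its weak version past the cut locus are statements about \emph{relatively compact} geodesic balls; here $B_g(p,s)$ fails to be relatively compact precisely for the radii $s$ you care about (those approaching the distance to the end), radial geodesics can escape through the end before time $s$, and $\int K\,dA_g$ need not be finite even with $K\ge 1$ and finite area (Example \ref{noncomplete}), so the Fiala--Hartman bookkeeping is not available. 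Your proposed repair via Jacobi fields only shows that a \emph{minimizing} geodesic has length $\le\pi$; on an incomplete surface two points need not be joined by one, and overcoming exactly this is the content of the paper's Hopf--Rinow-style argument (build a minimizing geodesic from $p$ to the \emph{nearest} point of a large Euclidean circle $\partial B_r(p)$, apply Myers' second-variation argument to it, and let $r\to\infty$). Moreover, even granting $\ell(s)\to 0$ as $s\to\pi^-$, the inference to ``the end sits at distance $\le\pi$ from $p$'' is not actually drawn: smallness of the boundary length of a metric ball does not by itself produce a single path of length $\le\pi$ escaping to Euclidean infinity, which is what $d_g(p,\infty)\le\pi$ means by the paper's definition.

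Step (b) is where the finite-volume hypothesis must do its work, and your version of it does not hold up: geodesic circles $\partial B_g(p,s)$ may be disconnected, so small total length does not let you join two points on them by a short arc, and two sequences escaping to infinity need not eventually lie on the same small component; hence ``Cauchy-equivalent, so the completion adds a single point $\xi$'' is unjustified. (Note also that your comparison in (a) nowhere uses $\int_{\mathbb{R}^2}e^{2u}\,dx<\infty$, yet the bridging conclusion genuinely requires it.) The paper's bridge is different and elementary: by H\"older's inequality and finiteness of the area there is a sequence $r_k\to\infty$ with $\int_{\partial B_{r_k}}e^{u}\,ds\to 0$, where $\partial B_{r_k}$ is the \emph{Euclidean} circle --- connected by fiat --- which every escaping curve must cross; joining the two crossing points along $\partial B_{r_k}$ gives $d_g(x,y)\le 2\pi+o(1)$. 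I would keep your overall plan but replace (a) by the argument of Proposition \ref{diametercontrolinfty} and (b) by the Euclidean-circle bridge.
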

This theorem will be proved in section 4, and the argument of our proof does apply Myer's Theorem in some situation while borrows the idea of proof of Hopf-Rinow Theorem. A key observation is Proposition \ref{diametercontrolinfty}, where no finiteness of volume assumption is needed.

We note that given the assumptions in Theorem \ref{weakbounddiamter}, we do not know whether or not the $2\pi$ upper bound for the conformal diameter is sharp. It is interesting to find a supersolution $u$ satisfying all assumptions in Theorem \ref{weakbounddiamter} such that the conformal diameter is strictly between $\pi$ and $2\pi$, or to prove that the upper bound should be $\pi$. So far we haven't had an answer yet.
\hfill\\
\subsection{Geometric inequalities related to radial supersolutions.}
We also study radial supersolutions to \eqref{scalliouville}. This is essentially an ordinary differential inequality problem. Let us define the conformal perimeter and area of balls in $\mathbb{R}^2$ by $l(r)=\int_{\partial B_r}e^{u}ds$ and $A(r)=\int_{B_r}e^{2u} dx$, where $B_r$ is the ball of radius $r$ in $\mathbb{R}^2$ centered at the origin and $ds$ is the length element. It turns out that many inequalities involving $l(r)$ and $A(r)$ can be derived. We prove:

\begin{theorem}
\label{maintheoremonradialcase}
Let $u$ be a radially symmetric function satisfying \eqref{ricci1}, and $g=e^{2u}\delta$ where $\delta$ is the Euclidean metric, then we have 
\begin{align}
    \label{volumeradial}
vol_g(\mathbb{R}^2) \le 4\pi    
\end{align} and \begin{align}
    \label{diamradial}
diam_g(\mathbb{R}^2) \le \pi.
\end{align} Furthermore, there exists $r_0>0$ such that $l$ is increasing when $r<r_0$ and $l$ is decreasing when $r>r_0$. Moreover, 
\begin{align}
    \label{yanxing3}
\max_{r>0}l(r)=l(r_0)\le 2\pi,    
\end{align}
and 
\begin{align}
\label{jiandanyouyong}
    \lim_{r\rightarrow \infty} l(r)=0.
\end{align}
Also, for any $r>0$, \begin{align}
\label{leibi}
A(A_{\infty}-A) \le l^2 \le 4\pi A -A^2,
\end{align}where $A_{\infty}=\int_{\mathbb{R}^2} e^{2u} dx$.
In addition, let $R=R(r)=\int_0^re^{u(\rho)}d\rho$, then 
\begin{align}
    \label{boundsforr0}
R(r_0) \le \frac{\pi}{2},  
\end{align}
\begin{align}
    \label{busanhuang}
A_{\infty}(1-\cos R) \le 2A,
\end{align}
and
\begin{align}
\label{hmm}
    \frac{A}{l}\ge \frac{1-\cos R}{\sin R}.
\end{align}Moreover, if $r \le r_0$, we also have 
\begin{align}
\label{jiefaqian}
    \frac{A}{l} \le \sin R.
\end{align}
In particular, if $r \le r_0$, then \begin{align}
\label{jiefa}
    A(r) \le l(r),
\end{align}and if $R \ge \frac{\pi}{2}$, then 
\begin{align}
\label{bushanhuangtuichu}
    A \ge \frac{A_{\infty}}{2}.
\end{align}
\end{theorem}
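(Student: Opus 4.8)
\textit{Proof proposal.} Since $u$ is radial I would write $u=u(r)$ and reparametrize everything by the conformal radius $R=R(r)=\int_0^r e^{u(\rho)}\,d\rho$, which is strictly increasing with $R(0)=0$ and limit $R_\infty:=\lim_{r\to\infty}R(r)$. A direct computation gives $l=2\pi r e^{u}$ and, with $'=\tfrac{d}{dR}$, the clean relations $\tfrac{dA}{dR}=l$ and $R'(r)=e^{u}$. Writing $L(R):=l$ and using \eqref{ricci1} in the radial form $ru''+u'\le -re^{2u}$, one finds after the change of variables that $L$ satisfies the differential inequality $L''+L\le 0$ on $(0,R_\infty)$, together with $L(0)=0$, $L'(0)=2\pi$ and $L>0$. (Here $L'(0)=2\pi$ uses only that $ru'\to0$ as $r\to0$.) This is the ODE incarnation of $K\ge 1$: the function $\psi:=L/2\pi$ is a subsolution of the Jacobi equation of the unit sphere. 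Every conclusion of the theorem will be read off from this single inequality by comparison with the model solution $2\pi\sin R$.

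The basic comparison facts come from the Wronskian $W:=L'\sin R-L\cos R$, which satisfies $W(0)=0$ and $W'=(L''+L)\sin R\le 0$ on $[0,\pi]$, hence $W\le 0$. This gives that $L/\sin R$ is non-increasing, so $L\le 2\pi\sin R$, which in turn forces $R_\infty\le\pi$ because $L>0$. Integrating $L\le 2\pi\sin R$ over $(0,R_\infty)\subseteq(0,\pi)$ yields \eqref{volumeradial}. Since $L''\le -L<0$, $L'$ strictly decreases from $2\pi$, so $L$ is concave and increases to a unique maximum at some $R_0$ and then decreases (this gives the monotonicity of $l$ and the existence of $r_0$); evaluating $W(R_0)\le 0$ with $L'(R_0)=0$ gives $\cos R_0\ge 0$, i.e. \eqref{boundsforr0}, while $L(R_0)\le 2\pi\sin R_0\le 2\pi$ is \eqref{yanxing3}. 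For \eqref{jiandanyouyong} I note that $L\to c>0$ would force $e^{u}\sim c/(2\pi r)$ and hence $R\to\infty$, contradicting $R_\infty\le\pi$. Finally, for \eqref{diamradial}, in geodesic polar coordinates the metric reads $dR^2+(L/2\pi)^2\,d\theta^2$ with $L/2\pi\le\sin R$ and $R_\infty\le\pi$, exhibiting $(\mathbb{R}^2,g)$ as a $1$-Lipschitz image of a cap of the unit round sphere, so its diameter is at most $\mathrm{diam}(\mathbb{S}^2)=\pi$; this is the radial Bonnet--Myers comparison, sharper than Theorem \ref{weakbounddiamter}.

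The isoperimetric-type bounds in \eqref{leibi} I would obtain from two auxiliary quantities. The energy $L^2+(A-2\pi)^2$ has derivative $2L\,(L'+A-2\pi)$, and $G:=L'+A-2\pi$ obeys $G(0)=0$ and $G'=L''+L\le 0$, so $G\le 0$; hence this energy is non-increasing and bounded by its initial value $4\pi^2$, which rearranges to $l^2\le 4\pi A-A^2$. For the left inequality I would use $\Phi:=L^2+A^2-A_\infty A$, which vanishes at both endpoints $R=0$ and $R=R_\infty$ (using \eqref{jiandanyouyong} and $A(R_\infty)=A_\infty$); its derivative is $L\,H$ with $H:=2L'+2A-A_\infty$ non-increasing and $H(0)=4\pi-A_\infty\ge 0$, so $\Phi$ first increases and then decreases and therefore stays $\ge 0$, giving $A(A_\infty-A)\le l^2$. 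From this left inequality one has $\tfrac{dA}{dR}=L\ge\sqrt{A(A_\infty-A)}$; separating variables and integrating from $R=0$ yields $\arccos(1-2A/A_\infty)\ge R$, which is exactly \eqref{busanhuang}, and then \eqref{bushanhuangtuichu} is immediate since $R\ge\pi/2$ makes $1-\cos R\ge 1$.

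For the ratio estimates, \eqref{hmm} follows directly from monotonicity of $L/\sin R$: for $s\le R$ one has $L(s)\sin R\ge L(R)\sin s$, and integrating in $s$ over $[0,R]$ gives $A\sin R\ge L(1-\cos R)$. The upper bound \eqref{jiefaqian} is the most delicate point, and it is where the restriction $r\le r_0$ is essential. I would prove it by testing $L''+L\le 0$ against the non-negative weight $\phi(s)=1-\cos s$: since $\phi''+\phi=1$, two integrations by parts give $A=\int_0^R L\le L(R)\sin R-L'(R)(1-\cos R)$, and for $R\le R_0$ we have $L'(R)\ge 0$, so the last term may be discarded to obtain $A\le L\sin R$; then \eqref{jiefa} follows because $\sin R\le 1$. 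The main obstacle throughout is the one-sided nature of \eqref{ricci1}: it controls $L''$ only from above, so \emph{lower} bounds on $l$ and, correspondingly, \emph{upper} bounds on the ratio $A/l$ cannot be reached by naive Sturm comparison. This is precisely why \eqref{jiefaqian} requires both the special test function $1-\cos s$ and the sign $L'\ge 0$, and why the left inequality of \eqref{leibi} must be handled by the two-endpoint ($\Phi(0)=\Phi(R_\infty)=0$) monotone-$H$ argument rather than by a one-sided comparison.
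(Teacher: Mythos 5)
Your reparametrization $R=\int_0^r e^{u}$, $L(R)=l$, $dA/dR=L$, reducing \eqref{ricci1} to the single Jacobi-type inequality $L''+L\le 0$ with $L(0)=0$, $L'(0)=2\pi$, is a genuinely different route from the paper's. The paper proves the second inequality of \eqref{leibi} analytically but obtains \eqref{boundsforr0}, \eqref{jiefaqian} and the first inequality of \eqref{leibi} from the Heintze--Karcher inequality, \eqref{hmm} from Bishop--Gromov, and the bound $R_\infty\le\pi$ (Proposition \ref{aiya}) by a geodesic/Myers argument, explicitly remarking that it has no analytic proof of the latter. Your Wronskian $W=L'\sin R-L\cos R$ delivers $L\le 2\pi\sin R$, hence $R_\infty\le\pi$, \eqref{volumeradial}, \eqref{yanxing3}, \eqref{boundsforr0} and \eqref{hmm} in a few lines; the monotone quantities $G=L'+A-2\pi$ and $H=2L'+2A-A_\infty$ (with $\Phi(0)=\Phi(R_\infty)=0$) give both halves of \eqref{leibi}; separating variables in $dA/dR\ge\sqrt{A(A_\infty-A)}$ gives \eqref{busanhuang} directly rather than by combining \eqref{hmm} with \eqref{leibi}; and testing $L''+L\le0$ against $1-\cos s$ reproduces exactly the paper's intermediate inequality $A\le L\sin R-L'(R)(1-\cos R)$ without Heintze--Karcher. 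I checked these computations and they are correct (including $L'(0)=2\pi$ from $ru'\to0$, and the ruling-out of $L'>0$ everywhere via $R_\infty\le\pi$, which also yields \eqref{jiandanyouyong}). This is in effect an answer to the paper's own open question asking for a purely analytic proof of these radial inequalities, and it buys uniformity: every estimate comes from one ODE comparison.

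The one step that does not hold up as written is \eqref{diamradial}. You exhibit $(\mathbb{R}^2,g)$ as a $1$-Lipschitz image of the round cap $\{0\le R<R_\infty\}$ and conclude its diameter is at most $\mathrm{diam}(S^2)=\pi$. But a $1$-Lipschitz surjection bounds the diameter of the image by the \emph{intrinsic} diameter of the domain, not by the diameter of an ambient space containing the domain; a cap of angular radius $R_\infty\in(\pi/2,\pi]$ is not geodesically convex in $S^2$, so its intrinsic diameter being $\le\pi$ is a nontrivial assertion that your one-line appeal to $\mathrm{diam}(S^2)$ does not establish. The repair is immediate from ingredients you already have, and is the paper's own argument: for points at conformal radii $R_1,R_2$, the path through the origin costs $R_1+R_2$, while the path going radially outward to $\partial B_{r}$ with $r$ large and then around costs $(R_\infty-R_1)+(R_\infty-R_2)+l(r)$ with $l(r)\to0$ by \eqref{jiandanyouyong}; the minimum of $R_1+R_2$ and $2R_\infty-R_1-R_2$ is at most $R_\infty\le\pi$. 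Replace the cap comparison by this two-path estimate and the proof of \eqref{diamradial} is complete. A smaller presentational point: the existence of $R_0$ (i.e.\ that $L'$ actually vanishes) is used before it is justified; the justification is your later observation that $L'>0$ throughout would force $L\to c>0$ and hence $R_\infty=\infty$, so state that first.
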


Note that when $u$ is a radial solution for the equality case \eqref{scalliouville}, then $(\mathbb{R}^2, e^{2u}\delta)$ becomes the standard unit sphere minus a point, and $l(r)$ then corresponds to the length of the latitudes and $A(r)$ corresponds to the area of the spherical caps. Then one can see geometrically that all the inequalities \eqref{volumeradial}-\eqref{hmm} become equalities.

Let us make some comments of the proofs of Theorem \ref{maintheoremonradialcase}. \eqref{volumeradial} can be derived from either of the two inequalities in \eqref{leibi}. \eqref{diamradial} is proved using the idea of proof of Myer's Theorem and Proposition \ref{weakbounddiamter}. \eqref{yanxing3}-\eqref{jiandanyouyong}, together with the second inequality in \eqref{leibi} are proved analytically, while the rest of the inequalites are proved using geometric argument. In particular, the first inequality in \eqref{leibi}, \eqref{boundsforr0} and \eqref{jiefaqian} are all obtained by exploiting the Heintze-Karcher inequality (see \cite{HK78}), which gives a control of the Jocobian of the exponential map starting from the boundary of domain in Riemannian manifold with strictly positive Ricci curvature lower bound. \eqref{hmm} is derived from Bishop-Gromov inequality, while \eqref{busanhuang} is a consequence of \eqref{leibi} and \eqref{hmm}. \eqref{jiefa} is a consequence of \eqref{jiefaqian}, and it can also be derived from Alexandrov inequality. All of these are presented in section 5.

We also discuss some higher dimensional generalizations for radial cases in section 6. 

\hfill\\
\subsection{Connection with sphere covering inequality}
Our study of supersolutions to the Liouville equation \eqref{scalliouville} is also motivated by the famous sphere covering inequality recently discovered by Gui-Moradifam\cite{GM}, which is a very powerful inequality that has been successfully applied to solve various problems on symmetry and uniqueness properties of solutions of semilinear elliptic equations with exponential nonlinearity in $\mathbb{R}^2$. In particular,
it was applied to prove a longstanding conjecture of Chang-Yang\cite{CY87} concerning
the best constant in Moser-Trudinger-type inequalities, see \cite{GM}, and has led to several symmetry and uniqueness results for mean field equations, Onsager vortices, SinhGordon equation, cosmic string equation, Toda systems, and rigidity of Hawking mass in general relativity. References include but not limited to \cite{Du}, \cite{25}, \cite{26}, \cite{27}, \cite{GM}, \cite{29}, \cite{30}, \cite{31}, \cite{39}, \cite{41}, etc.

A simple version of the sphere covering inequality states as follows:
\begin{theorem}(\cite[Theorem 1.1]{GHM})
\label{spherecovering}
Let $u_1$ be a smooth function defined in a simply connected domain $\Omega_0$ such that 
\begin{align}
    \label{subsolution}
\Delta u_1+e^{2u_1} \ge 0\, \mbox{in $\Omega_0$},\quad \int_{\Omega_0}e^{2u_1} \le 4\pi. 
\end{align}Let $u_2$ be another smooth function defined in $\Omega_0$ and suppose that in a subdomain $\Omega\subset \Omega_0$ such that
\begin{align}
    \Delta u_2+e^{2u_2}\ge \Delta u_1+e^{2u_1} \, \mbox{in $\Omega$}, \, u_2>u_1 \, \mbox{in $\Omega$ and} \, u_2=u_1 \, \mbox{on $\partial \Omega$.}
\end{align}Then 
\begin{align}
    \int_{\Omega}e^{2u_1}dx +\int_{\Omega}e^{2u_2}dx \ge 4\pi.
\end{align}
\end{theorem}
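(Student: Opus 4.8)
The plan is to run a level-set (co-area) argument comparing the two conformal metrics $g_i=e^{2u_i}\delta$ against the round sphere, where equality is attained by the standard bubbles. First I would record two reductions. Since $\Delta u_2+e^{2u_2}\ge \Delta u_1+e^{2u_1}\ge 0$, both $u_1$ and $u_2$ are subsolutions, so the Gaussian curvatures satisfy $K_i=-e^{-2u_i}\Delta u_i\le 1$. Writing $A_i=\int_\Omega e^{2u_i}\,dx$, I may assume $A_2<4\pi$, since otherwise $A_1+A_2\ge A_2\ge 4\pi$ and there is nothing to prove; together with $A_1\le\int_{\Omega_0}e^{2u_1}\le 4\pi$ this keeps both areas in $[0,4\pi]$, which is exactly the range in which the isoperimetric inequality below is useful.

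Next I would foliate $\Omega$ by the level sets of $w:=u_2-u_1$, which satisfies $w>0$ in $\Omega$ and $w=0$ on $\partial\Omega$. For $t\ge 0$ set $\Sigma_t=\{w>t\}$ and $a_i(t)=\int_{\Sigma_t}e^{2u_i}\,dx$, so that $a_i(0)=A_i$ and $a_i(t)\to 0$ as $t\to\max w$. On $\{w=t\}$ one has $u_2=u_1+t$, so the co-area formula yields the exact relations $a_2'(t)=e^{2t}a_1'(t)$ and $L_2(t)=e^{t}L_1(t)$, where $L_i(t)=\int_{\{w=t\}}e^{u_i}\,ds$ is the $g_i$-length of the level curve. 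Two further inequalities feed the argument. Integrating $\Delta w\ge e^{2u_1}-e^{2u_2}$ over $\Sigma_t$ and applying the divergence theorem (the outer normal derivative of $w$ is $-|\nabla w|$) gives $\int_{\{w=t\}}|\nabla w|\,ds\le a_2(t)-a_1(t)$; combined with Cauchy--Schwarz and $\int_{\{w=t\}}e^{2u_i}/|\nabla w|\,ds=-a_i'(t)$ this produces $L_i(t)^2\le -a_i'(t)\,(a_2(t)-a_1(t))$. On the other hand, because $K_i\le 1$ on the (for a.e.\ $t$, smooth) domain $\Sigma_t$, the Bol--Fiala isoperimetric inequality gives $L_i(t)^2\ge a_i(t)\,(4\pi-a_i(t))$, with equality precisely for spherical caps.

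To exploit these I would substitute $a_i=2\pi(1-\cos\theta_i)$ with $\theta_i\in[0,\pi]$, so that $a_i(4\pi-a_i)=4\pi^2\sin^2\theta_i$ and $a_2-a_1=2\pi(\cos\theta_1-\cos\theta_2)$ with $\theta_2>\theta_1$ (as $u_2>u_1$). Dividing the two chains of inequalities by $4\pi^2\sin\theta_i>0$ collapses everything to the coupled system
\begin{align}
-\theta_i'(t)\ \ge\ \frac{\sin\theta_i(t)}{\cos\theta_1(t)-\cos\theta_2(t)},\qquad i=1,2,
\end{align}
supplemented by the exact coupling $a_2'=e^{2t}a_1'$ and the boundary condition $\theta_1,\theta_2\to 0$ as $t\to\max w$, i.e.\ $\cos\theta_1+\cos\theta_2\to 2$ there. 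Since the target $A_1+A_2\ge 4\pi$ is equivalent to $\cos\theta_1(0)+\cos\theta_2(0)\le 0$, the whole theorem reduces to an ODE comparison: the true profile $(\theta_1,\theta_2)$ must be shown to dominate the spherical-cap extremizer, along which all of the above inequalities are equalities and for which $\cos\theta_1(0)+\cos\theta_2(0)=0$.

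I expect this final step to be the genuine obstacle. Merely summing the two inequalities after multiplying by $\sin\theta_i$ shows that $\cos\theta_1+\cos\theta_2$ is monotone in $t$, but that alone only yields $\cos\theta_1(0)+\cos\theta_2(0)\le 2$; extracting the full bound requires using the factor $e^{2t}$ in $a_2'=e^{2t}a_1'$ to pin the trajectory against the extremal cap profile, rather than integrating a single scalar quantity. The remaining difficulties are technical but must be handled with care: invoking Sard's theorem so that $\{w=t\}$ is a regular level curve for a.e.\ $t$, justifying the differentiation of $a_i(t)$, and verifying the hypotheses of the Bol--Fiala inequality, in particular treating level sets $\Sigma_t$ that are disconnected or not simply connected by arguing componentwise and exploiting the concavity of $a\mapsto a(4\pi-a)$.
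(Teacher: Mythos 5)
Your setup is exactly the level-set machinery behind the cited proof (the paper itself does not reprove this theorem, but it records the mechanism in Theorem \ref{ghm1.4} and reuses it verbatim in the proofs of Theorems \ref{genghm} and \ref{radialspherecovering}): pass to $w=u_2-u_1$, integrate the differential inequality over $\{w>t\}$, combine the divergence theorem with Cauchy--Schwarz and the Alexandrov--Bol inequality, and exploit $a_2'=e^{2t}a_1'$. However, the proposal does not prove the theorem, and you have correctly located where it fails: the concluding ODE comparison is left open, and the scalar quantity you propose to track cannot work. Indeed, $\cos\theta_1+\cos\theta_2$ is monotone with limit $2$ at $t=\max w$, so it can only give $\cos\theta_1(0)+\cos\theta_2(0)\le 2$, which is vacuous; no amount of care with Sard's theorem or disconnected level sets repairs this, because the information in $a_2'=e^{2t}a_1'$ has been discarded at that point.

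The missing step is an explicit total-derivative manipulation, not a trajectory-comparison argument. Keep only the inequality for $i=1$, namely $-a_1'(t)\bigl(a_2(t)-a_1(t)\bigr)\ge 4\pi a_1(t)-a_1(t)^2$ (Bol applied to the metric $e^{2u_1}\delta$, which is the only one whose ambient area is controlled by hypothesis \eqref{subsolution}; applying Bol to $e^{2u_2}\delta$ as you suggest is not justified, since $\int_{\Omega_0}e^{2u_2}$ is not assumed bounded by $4\pi$). Multiply by $e^{2t}$ and use $a_2'=e^{2t}a_1'$ to rewrite both sides as exact derivatives plus terms of a fixed sign: one finds $-\tfrac12 (a_2^2)'\ge \bigl(2\pi e^{2t}a_1-\tfrac12 e^{2t}a_1^2\bigr)'-2\pi a_2'$. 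Integrating from $t=0$ to $t=\sup w$, and using $e^{2t}a_1(t)\le a_2(t)\to 0$ at the top, yields $4\pi a_2(0)-a_2(0)^2\le 4\pi a_1(0)-a_1(0)^2$. Since $w>0$ in $\Omega$ forces $a_2(0)>a_1(0)$, factoring the difference of these quadratics gives $a_1(0)+a_2(0)\ge 4\pi$, which is the claim. This algebraic identity (the analogue of the chain of displays culminating in \eqref{jianhua} in the proof of Theorem \ref{genghm}) is the actual content of the proof; without it your argument establishes only the trivial bound.
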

This Theorem  was first proved by Gui-Morodifam\cite{GM}, and later proved by Gui-Hang-Morodifam\cite{GHM} in a simpler but more intrinsic way. However, both proofs use the crucial assumption \eqref{subsolution}, since \eqref{subsolution} means that $(\Omega, e^{2u_1}\delta)$ is a Riemannain surface with $K\le 1$, where $K$ is the Gaussian curvature. Only with this assumption, the following Alexander-Bol inequality can be applied: 
\begin{align}
    \label{bol}
l^2(\partial \Omega) \ge 4\pi A(\Omega)-A^2(\Omega),    
\end{align}where $l(\partial \Omega)$ is the conformal length of $\partial \Omega$ under the metric $e^{2u_1}\delta$, and $A(\Omega)$ is the conformal area. The inequality \eqref{bol} is essential in both proofs of Theorem \ref{spherecovering} in \cite{GM} and \cite{GHM}. 

The main step in proving Theorem \ref{spherecovering} in \cite{GHM} is the following theorem:

\begin{theorem} (\cite[Theorem 1.4]{GHM})
\label{ghm1.4}
Let $(M,g)$ be a simply connected Riemannian surface with $\mu(M) \le 4\pi$ and $K \le 1$, where $\mu$ is the measure of $(M,g)$ and $K$ is the Gaussian curvature. Let $\Omega$ be a domain with compact closure and nonempty boundary, and $\lambda$ is a constant. Then if $u \in C^2(\overline{\Omega})$ satisfying
\begin{align}
\label{1.4}
  \begin{cases}
-\Delta_g u+1 \le \lambda e^{2u}, \quad  u>0 \quad &\mbox{in $\Omega$}\\
u=0 \quad &\mbox{on $\partial \Omega$}
\end{cases}  
\end{align}
Then \begin{align}
\label{xinren1}
    4\pi \int_{\Omega} e^{2u} d\mu -\lambda \left(\int_{\Omega} e^{2u} d\mu \right)^2 \le 4 \pi \mu(\Omega)-\mu^2(\Omega).
\end{align}
In particular, if $\lambda \in (0,1]$, then
\begin{align}
\label{linshiruji}
    \int_{\Omega} e^{2u} d\mu+\mu(\Omega) \ge \frac{4\pi}{\lambda}.
\end{align}
\end{theorem}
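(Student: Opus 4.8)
The plan is to run the classical level-set (symmetrization) argument that underlies the Alexandrov--Bol inequality \eqref{bol}, upgraded by the differential inequality \eqref{1.4}. Let $t_{\max}=\max_{\overline\Omega}u$, and for $t\in[0,t_{\max}]$ set $\Omega_t=\{x\in\Omega:u(x)>t\}$, together with the two area functions
\begin{equation}
a(t)=\mu(\Omega_t),\qquad b(t)=\int_{\Omega_t}e^{2u}\,d\mu,
\end{equation}
and let $\ell(t)$ denote the $g$-length of the level curve $\{u=t\}$. Since $u>0$ in $\Omega$ and $u=0$ on $\partial\Omega$, we have $a(0)=\mu(\Omega)$ and $b(0)=\int_\Omega e^{2u}\,d\mu$, while $a(t_{\max})=b(t_{\max})=0$. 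Writing $P=b(0)$ and $Q=a(0)$, the target \eqref{xinren1} is exactly $4\pi P-\lambda P^2\le 4\pi Q-Q^2$, i.e. $F(0)\le G(0)$ for $F(t)=4\pi b-\lambda b^2$ and $G(t)=4\pi a-a^2$. So the theorem reduces to comparing these two functions at $t=0$, and I would produce a monotone quantity interpolating between them that vanishes at $t=t_{\max}$.

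First I would record the kinematics. By the coarea formula, for a.e. $t$,
\begin{equation}
-a'(t)=\int_{\{u=t\}}\frac{d\sigma}{|\nabla u|},\qquad b'(t)=e^{2t}a'(t),
\end{equation}
the second identity because $e^{2u}\equiv e^{2t}$ on $\{u=t\}$. Cauchy--Schwarz on $\{u=t\}$ gives $\ell(t)^2\le\big(\int_{\{u=t\}}|\nabla u|\,d\sigma\big)(-a'(t))$, while the divergence theorem together with the hypothesis $-\Delta_g u+1\le\lambda e^{2u}$ yields
\begin{equation}
\int_{\{u=t\}}|\nabla u|\,d\sigma=-\int_{\Omega_t}\Delta_g u\,d\mu\le\int_{\Omega_t}\big(\lambda e^{2u}-1\big)\,d\mu=\lambda b(t)-a(t)
\end{equation}
(in particular $\lambda b-a\ge0$). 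Since $K\le1$, the Alexandrov--Bol inequality \eqref{bol} applied to $\Omega_t$ gives $\ell(t)^2\ge a(t)\big(4\pi-a(t)\big)$. Chaining these facts produces the single pointwise differential inequality
\begin{equation}
\label{keyDI}
-a'(t)\,\big(\lambda b(t)-a(t)\big)\ \ge\ a(t)\big(4\pi-a(t)\big)=G(t).
\end{equation}
Geometrically this reflects that the conformal metric $h=e^{2u}g$ has Gaussian curvature $K_h=e^{-2u}(K-\Delta_g u)\le\lambda$, which is the meaning of \eqref{1.4}.

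Next I would integrate \eqref{keyDI} with an integrating factor. Consider $\Phi(t)=F(t)-e^{2t}G(t)$. Using $b'=e^{2t}a'$ and $G'=a'(4\pi-2a)$, a direct computation gives
\begin{equation}
\Phi'(t)=2e^{2t}\Big[-a'(t)\big(\lambda b(t)-a(t)\big)-G(t)\Big]\ge0
\end{equation}
by \eqref{keyDI}, so $\Phi$ is nondecreasing on $[0,t_{\max}]$. Since $F(t_{\max})=G(t_{\max})=0$ we have $\Phi(t_{\max})=0$, whence $\Phi(0)\le0$, that is $F(0)\le G(0)$, which is \eqref{xinren1}. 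For the ``in particular'' claim with $\lambda\in(0,1]$: because $u>0$ in $\Omega$ we have $e^{2u}>1$, hence $P=\int_\Omega e^{2u}\,d\mu>\mu(\Omega)=Q$. Rewriting \eqref{xinren1} as $4\pi(P-Q)\le\lambda P^2-Q^2$ and using $\lambda\le1$ (so $\lambda P^2-Q^2\le\lambda(P^2-Q^2)$) gives $4\pi(P-Q)\le\lambda(P-Q)(P+Q)$; dividing by $P-Q>0$ yields $\lambda(P+Q)\ge4\pi$, i.e. \eqref{linshiruji}.

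The main obstacle is not the ODE but the rigorous use of \eqref{bol} on the super-level sets. A generic $\Omega_t$ need be neither connected nor simply connected, so one must apply Bol componentwise and control the ``holes'': for finitely many simply connected pieces the inequality survives because $(\sum\ell_i)^2\ge\sum\ell_i^2\ge\sum a_i(4\pi-a_i)\ge a(4\pi-a)$, whereas holes are filled using that the ambient $M$ is simply connected with $\mu(M)\le4\pi$ (this is precisely where that hypothesis enters). The remaining points are standard but must be handled carefully: Sard's theorem to restrict \eqref{keyDI} to regular values $t$, and the coarea formula to guarantee that $a$ and $b$ are absolutely continuous, so that the differential inequality may legitimately be integrated up to $t=t_{\max}$.
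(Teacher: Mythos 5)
The paper does not actually prove Theorem \ref{ghm1.4} --- it is quoted from \cite{GHM} --- but your argument is a correct reconstruction of the standard level-set proof, and it is exactly the machinery (coarea formula, divergence theorem over super-level sets, Cauchy--Schwarz on $\{u=t\}$, an isoperimetric inequality, then integration of the resulting differential inequality) that the paper itself deploys to prove its analogue Theorem \ref{genghm}, with L\'evy--Gromov in place of Alexandrov--Bol there. Your integrating factor $\Phi=F-e^{2t}G$ is a clean repackaging of the paper's step of rewriting the inequality as a total derivative plus a signed remainder, and you correctly identify the one genuinely delicate point --- justifying Bol's inequality on the possibly disconnected, non--simply-connected sets $\Omega_t$ --- which is precisely where the hypotheses that $M$ is simply connected and $\mu(M)\le 4\pi$ are consumed.
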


For the case when Gaussian curvature $K \ge 1$, the above result is no longer true. Actually the Alexander-Bol inequality goes in the complete opposite direction, because of the second inequality in \eqref{leibi}. However, we can still say something. The following are the results we proved:

First, we prove the following counterpart of Theorem \ref{ghm1.4} in the case when $M$ is a closed surface and $K\ge 1$. In fact, we prove a slightly more general version.
\begin{theorem}
\label{genghm}
Let $(M,g)$ be a closed Riemannian surface with $K \ge 1$, where $K$ is the Gaussian curvature. Let $\mu$ be the volume measure of $(M,g)$ and $\Omega$ be a domain with nonempty boundary. If $u$ satisfies 
\begin{align}
\label{rubin2''}
  \begin{cases}
-\Delta_g u+1 \le h(u), \quad  u>0 \quad &\mbox{in $\Omega$}\\
u=0 \quad &\mbox{on $\partial \Omega$}
\end{cases} 
\end{align}where $h(t)$ is a nonnnegative function such that \begin{align}
\label{mc1''}
   0 \le  h'(t) \le 2 h(t),
\end{align}
then \begin{align}
\label{guijia1'}
    \mu(M) \int_{\Omega}h(u) d\mu -\left(\int_{\Omega} h(u) d\mu\right)^2 \le h(0) \mu(\Omega) \mu (M\setminus \Omega).
\end{align}
Moreover, if $h_0 \in (0,1]$, then 
\begin{align}
\label{guijia2'}
    \int_{\Omega}h(u) d\mu +h(0)\mu(\Omega) \ge \mu(M).
\end{align}
\end{theorem}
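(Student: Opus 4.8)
The plan is to run a level-set (distribution-function) argument that converts the differential inequality \eqref{rubin2''} into a single ordinary differential inequality, and then to integrate a cleverly chosen monotone quantity. Write $V=\mu(M)$ and $h_0=h(0)$, and for $t\in[0,T^*]$ with $T^*=\max_{\overline{\Omega}}u$ set $\Omega_t=\{u>t\}$, $\mu(t)=\mu(\Omega_t)$, $H(t)=\int_{\Omega_t}h(u)\,d\mu$ and $l(t)=\Haus{1}(\{u=t\})$. By the coarea formula, together with Sard's theorem to discard the measure-zero set of critical values, these are absolutely continuous with $-\mu'(t)=\int_{\{u=t\}}|\nabla u|^{-1}\,ds$ and $H'(t)=h(t)\mu'(t)$. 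Integrating $-\Delta_g u+1\le h(u)$ over $\Omega_t$ and using the divergence theorem gives $\int_{\{u=t\}}|\nabla u|\,ds\le H(t)-\mu(t)$; combining this with the Cauchy--Schwarz inequality $l(t)^2\le\bigl(\int_{\{u=t\}}|\nabla u|\,ds\bigr)\bigl(\int_{\{u=t\}}|\nabla u|^{-1}\,ds\bigr)$ yields
\begin{align*}
l(t)^2\le\bigl(H(t)-\mu(t)\bigr)\bigl(-\mu'(t)\bigr).
\end{align*}

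For the matching geometric lower bound on $l(t)$ I would invoke the Lévy--Gromov isoperimetric inequality. Since $\dim M=2$, the curvature hypothesis $K\ge1$ is exactly $Ric_g\ge g$, so the isoperimetric profile of the closed surface $M$ dominates that of the round sphere of the same total area; for a region of measure $\mu(t)$ this reads $l(t)^2\ge\mu(t)\bigl(V-\mu(t)\bigr)$, and crucially it requires no hypothesis on the topology or connectedness of $\Omega_t$. Combining the two estimates for $l(t)^2$ produces the \emph{master differential inequality}
\begin{align*}
\mu(t)\bigl(V-\mu(t)\bigr)\le\bigl(H(t)-\mu(t)\bigr)\bigl(-\mu'(t)\bigr).
\end{align*}

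The heart of the argument is to exhibit a monotone quantity. Set $N(t)=H(t)\bigl(V-H(t)\bigr)$ and $D(t)=h(t)\,\mu(t)\bigl(V-\mu(t)\bigr)$. Using $H'=h\mu'$ one computes $N'=h\mu'(V-2H)$ and $D'=h'\mu(V-\mu)+h\mu'(V-2\mu)$, so that $D'-N'=h'\mu(V-\mu)+2h\mu'(H-\mu)$. Here the hypothesis \eqref{mc1''} enters decisively: since $0\le h'\le 2h$ and $\mu(V-\mu)\ge0$ we have $h'\mu(V-\mu)\le 2h\,\mu(V-\mu)$, and the master inequality bounds $2h\,\mu(V-\mu)\le 2h(H-\mu)(-\mu')=-2h\mu'(H-\mu)$; hence $D'-N'\le0$. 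Thus $D-N$ is nonincreasing on $[0,T^*]$, while $\mu(t),H(t)\to0$ as $t\to T^*$ give $(D-N)(T^*)=0$, forcing $(D-N)(0)\ge0$. Reading off $\mu(0)=\mu(\Omega)$ and $H(0)=\int_\Omega h(u)\,d\mu$ yields precisely \eqref{guijia1'}.

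Finally \eqref{guijia2'} follows from \eqref{guijia1'} by a careful analysis of the downward parabola $f(x)=x(V-x)$. Writing $I=\int_\Omega h(u)\,d\mu$ and $m=\mu(\Omega)$, the bound $h_0\le1$ upgrades \eqref{guijia1'} to $f(I)\le h_0m(V-m)\le(h_0m)\bigl(V-h_0m\bigr)=f(h_0m)$, so $I$ lies outside the open interval with endpoints $h_0m$ and $V-h_0m$. Since $h$ is increasing one always has $I\ge h_0m$, and equality is impossible: $I=h_0m$ would force $h(u)=h_0$ a.e. in $\Omega$, whence $\Delta_g u\ge1-h_0\ge0$, making $u$ subharmonic and hence, by the maximum principle with $u|_{\partial\Omega}=0$, forcing $u\le0$, contradicting $u>0$ in $\Omega$. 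Therefore $I>h_0m$, which places $I$ in the upper branch and gives $I\ge V-h_0m$, i.e. $\int_\Omega h(u)\,d\mu+h_0\mu(\Omega)\ge\mu(M)$. I expect the main obstacle to be the measure-theoretic bookkeeping of this level-set calculus — justifying the absolute continuity and the coarea and differentiation identities at noncritical levels, and handling the critical set — together with applying Lévy--Gromov in the correct normalized form; once the monotone combination $D-N$ is identified, the algebraic engine is short.
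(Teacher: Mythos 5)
Your proposal is correct and follows essentially the same route as the paper's proof: level sets of $u$, the divergence theorem plus Cauchy--Schwarz to bound $l(t)^2$ from above by $(H-\mu)(-\mu')$, the L\'evy--Gromov inequality in the form $l^2\ge \mu(V-\mu)$ to bound it from below, and then the monotonicity of $D-N$, which is exactly the paper's integrated differential inequality \eqref{jianhua} (the paper writes $h=\lambda e^{2t}$ and uses $\lambda'\le 0$, which is the same use of \eqref{mc1''}). Your maximum-principle treatment of the degenerate case $\int_\Omega h(u)\,d\mu=h(0)\mu(\Omega)$ before dividing in the last step is a small refinement that the paper's write-up glosses over.
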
In particular, if $h(u)=\lambda e^{2u}$, then \eqref{guijia1'} and \eqref{guijia2'} become the following forms similar to \eqref{xinren1} and \eqref{linshiruji}:
\begin{align}
\label{guijia1''}
    \mu(M) \int_{\Omega}e^{2u} d\mu -\lambda \left(\int_{\Omega} e^{2u} d\mu\right)^2 \le \mu(M)\mu(\Omega)- \mu^2(\Omega),
\end{align}and
\begin{align}
    \label{guijia2''}
\int_{\Omega}e^{2u} d\mu +\mu(\Omega) \ge \frac{\mu(M)}{\lambda}.    
\end{align}
Note that when $K=1$, then $\mu(M)=4\pi$ and hence \eqref{guijia2''} recovers \eqref{linshiruji}. If $K >1$, then $\int_{\Omega}e^{2u} d\mu +\mu(\Omega)$ has a smaller lower bound, since $\mu(M)<4\pi$.
\hfill\\

The proof of Theorem \ref{genghm} is motivated by that of Theorem \ref{ghm1.4}, and the new ingredient is the application of L\'evy-Gromov inequality instead of Alexander-Bol ineqaulity. Similarly as in \cite{GHM}, we also prove the dual form of Theorem \ref{genghm}, see Theorem \ref{genghm'} in section 7.

The closedness assumption of $M$ is crucial in Theorem \ref{genghm}, since it is crucial in either of the two proofs of L\'evy-Gromov inequality so far we have known, see \cite{Gromov} for the orginal proof by Gromov and \cite{Bayle} by Bayle for a different proof. As one can see from example \ref{noncomplete} and the comment right after, even for radial supersolution $u$, one cannot expect that $(\mathbb{R}^2, e^{2u}\delta)$ can be completed as a closed Riemannian surface. Hence a sphere covering inequality for the case $K\ge 1$ cannot be directly obtained from Theorem \ref{genghm}.

However, by the first inequality of \eqref{leibi} obtained in Theorem \ref{maintheoremonradialcase} and by a similar argument in the proof of Theorem \ref{genghm}, the following counterpart to Theorem \ref{spherecovering} for radial functions is obtained.
\begin{theorem}
\label{radialspherecovering}
Let $u_1$ be a radially symmetric function such that\begin{align*}
    \Delta u_1+e^{2u_1} \le 0\quad \mbox{in $\mathbb{R}^2$.}
\end{align*}Let $u_2$ be another radially symmetric function defined in $\mathbb{R}^2$. If for some disk $B_r$ we have
\begin{align}
    \Delta u_2+e^{2u_2}\ge \Delta u_1+e^{2u_1} \, \mbox{in $B_r$}, \, u_2>u_1 \, \mbox{in $B_r$ and} \, u_2=u_1 \, \mbox{on $\partial B_r$,}
\end{align}then
\begin{align}
    \int_{B_r} e^{2u_1}+\int_{B_r}e^{2u_2} \ge \int_{\mathbb{R}^2} e^{2u_1} dx.
\end{align}The equality holds if and only if $(B_r, e^{2u_1}\delta)$ and $(B_r,e^{2u_2}\delta)$ are two complementary spherical caps on the unit sphere. 
\end{theorem}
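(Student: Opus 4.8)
The plan is to recognize this statement as the exact radial analogue of Theorem \ref{genghm}, transported to the (non-closed) conformal surface determined by $u_1$, with the first inequality in \eqref{leibi} playing the role that the L\'evy--Gromov inequality plays in the closed case. First I would change the base metric: set $g_1 = e^{2u_1}\delta$ and $v = u_2 - u_1$ on $B_r$. The hypotheses say precisely that $v>0$ in $B_r$ and $v=0$ on $\partial B_r$, while $\Delta u_2 + e^{2u_2}\ge \Delta u_1 + e^{2u_1}$ rewrites, using the conformal covariance $\Delta_\delta = e^{2u_1}\Delta_{g_1}$ in dimension two, as $-\Delta_{g_1}v + 1 \le e^{2v}$ in $B_r$. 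Thus $v$ satisfies the system \eqref{rubin2''} on $(\mathbb{R}^2,g_1)$ with $h(s)=e^{2s}$ (so $h(0)=1$ and $h'=2h$, meeting \eqref{mc1''}) and $h_0=\lambda=1$.

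Next I would observe that the target inequality \emph{is} the conclusion \eqref{guijia2'} of Theorem \ref{genghm} for this data. Indeed $\int_{B_r} e^{2v}\,d\mu_{g_1} = \int_{B_r} e^{2u_2}\,dx = A_2(r)$, while $\mu_{g_1}(B_r)=\int_{B_r}e^{2u_1}\,dx = A_1(r)$ and $\mu_{g_1}(\mathbb{R}^2)=\int_{\mathbb{R}^2}e^{2u_1}\,dx = A_\infty$, so \eqref{guijia2'} reads $A_2(r)+A_1(r)\ge A_\infty$. Hence the whole point is to reprove \eqref{guijia2'} when the ambient surface $(\mathbb{R}^2,g_1)$ is \emph{not} closed, which is exactly where the proof of Theorem \ref{genghm} uses its closedness hypothesis.

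The only ingredient lost by dropping closedness is the L\'evy--Gromov bound $L(\partial D)^2\ge \mu(D)\bigl(\mu(M)-\mu(D)\bigr)$. Here I would instead invoke Theorem \ref{maintheoremonradialcase}: since $u_1$ is a radial supersolution, $(\mathbb{R}^2,g_1)$ has finite volume $A_\infty\le 4\pi$ and the first inequality in \eqref{leibi} gives exactly $l^2\ge A(A_\infty-A)$ for every concentric disk, i.e. precisely the L\'evy--Gromov-type isoperimetric bound the argument needs. I would then rerun the level-set computation of Theorem \ref{genghm} using the concentric disks $B_s$ as the exhausting family. The divergence theorem applied to $\Delta v\ge e^{2u_1}(1-e^{2v})$ over $B_s$ yields $2\pi s\,v'(s)\ge A_1(s)-A_2(s)$; the conformal identities $l_2=e^{v}l_1$ and $A_2'=e^{2v}A_1'$ hold pointwise in $s$; and \eqref{leibi} becomes $2\pi s\,A_1'(s)\ge A_1(A_\infty-A_1)$ (the Cauchy--Schwarz step of Theorem \ref{genghm} being an equality on concentric circles). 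Combining these into a differential inequality for $(A_1,A_2)$ and integrating over $[0,r]$, using the terminal condition $v(r)=0$, should produce $A_\infty A_2(r)-A_2(r)^2\le A_\infty A_1(r)-A_1(r)^2$, which rearranges, since $A_2(r)>A_1(r)$, to $A_1(r)+A_2(r)\ge A_\infty$.

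I expect the main obstacle to be exactly the step that closedness would otherwise finesse: making the level-set integration legitimate in the radial, incomplete setting. Since \eqref{leibi} is proved only for \emph{concentric} disks, one must guarantee it is applied to the correct domains (equivalently, that the relevant super-level sets of $v$ may be taken to be disks) and one must control the signs along the integration so that $v(r)=0$ forces the inequality in the right direction. A clean way to organize both the integration and the rigidity is the inversion $s\mapsto r^2/s$, which produces $\tilde u_1(s)=u_1(r^2/s)+2\ln(r/s)$ on $B_r$: a supersolution with $\tilde u_1=u_1$ on $\partial B_r$ and $\int_{B_r}e^{2\tilde u_1}\,dx = A_\infty-A_1(r)$, so that the claim becomes $\int_{B_r}e^{2u_2}\,dx\ge \int_{B_r}e^{2\tilde u_1}\,dx$ for two functions with the same boundary values. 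Finally, for the equality statement I would trace equality back through \eqref{leibi}: equality at every radius forces, via the equality analysis behind Theorem \ref{maintheoremonradialcase}, that $(B_r,g_1)$ is a spherical cap, and the inversion identity $A_2(s)=A_\infty-A_1(r^2/s)$ then identifies $(B_r,e^{2u_2}\delta)$ as the complementary cap.
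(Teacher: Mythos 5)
Your overall strategy coincides with the paper's: set $v=u_2-u_1$, observe that $v$ satisfies \eqref{rubin2''} on $(\mathbb{R}^2,e^{2u_1}\delta)$ with $h(s)=e^{2s}$, and rerun the level-set argument of Theorem \ref{genghm} with the L\'evy--Gromov inequality replaced by the first inequality of \eqref{leibi}. However, there is a genuine gap at exactly the point you flag as ``the main obstacle'' and then leave unresolved. The level-set argument applies the isoperimetric inequality to the boundaries of the super-level sets $\{v>t\}$. Since $v=u_2-u_1$ is radial but in general not monotone in the radial variable, $\{v>t\}$ need not be a concentric disk: it is a finite union of a disk and/or annuli centered at the origin. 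The first inequality of \eqref{leibi} is proved in Theorem \ref{maintheoremonradialcase} only for concentric disks, so it cannot be invoked directly on these level sets. The paper closes this gap with a separate lemma (inequality \eqref{liantongargument} in its proof): the bound $P^2(\omega)\ge A(\omega)\bigl(A_\infty-A(\omega)\bigr)$ is extended from disks to arbitrary radially symmetric domains by an elementary algebraic argument --- first to annuli $\omega_1\setminus\omega_2$, using \eqref{leibi} on each of the two disks together with $P^2(\omega)\ge P^2(\omega_1)+P^2(\omega_2)$, and then to disjoint unions by the superadditivity of $A\mapsto A(A_\infty-A)$ over components. Without this lemma the level-set computation does not go through.

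Your two proposed workarounds do not substitute for it. The concentric-disk differential inequality for $(A_1,A_2)$ is only asserted (``should produce'') and does not obviously close: the quantity $A_\infty A_2-A_2^2-A_\infty A_1+A_1^2$ is \emph{increasing} near $s=0$ (where $e^{2v}>1$ and $A_1,A_2$ are small), so a pointwise combination of the three listed identities cannot yield the desired sign without a more global argument. The inversion $s\mapsto r^2/s$ reformulates the target but likewise does not remove the need for an isoperimetric inequality on non-disk level sets. Once \eqref{liantongargument} is supplied, the remainder of your outline --- including the equality analysis traced back through the rigidity in \eqref{leibi} (Remark \ref{bushang}) --- matches the paper's proof.
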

Similarly, we state the dual form for Theorem \ref{radialspherecovering}.
\begin{theorem}
\label{radialspherecovering'}
Let $u_1$ be a radially symmetric function such that\begin{align*}
    \Delta u_1+e^{2u_1} \le 0\quad \mbox{in $\mathbb{R}^2$.}
\end{align*}Let $u_2$ be another radially symmetric function defined in $\mathbb{R}^2$. If for some disk $B_r$ we have
\begin{align}
    \Delta u_2+e^{2u_2}\le \Delta u_1+e^{2u_1} \, \mbox{in $B_r$}, \, u_2<u_1 \, \mbox{in $B_r$ and} \, u_2=u_1 \, \mbox{on $\partial B_r$,}
\end{align}then
\begin{align}
    \int_{B_r} e^{2u_1}+\int_{B_r}e^{2u_2} \ge \int_{\mathbb{R}^2} e^{2u_1} dx.
\end{align}Moreover, the equality holds if and only if $(B_r, e^{2u_1}\delta)$ and $(B_r,e^{2u_2}\delta)$ are two complementary spherical caps on the unit sphere. 
\end{theorem}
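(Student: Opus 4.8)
The plan is to mirror the proof of Theorem \ref{radialspherecovering}, with the relevant inequalities reversed, and to reduce the statement to a one–dimensional comparison problem governed by the first inequality in \eqref{leibi}. First I would record that in the present (dual) setting \emph{both} $u_1$ and $u_2$ are radial supersolutions on $B_r$: indeed $\Delta u_2+e^{2u_2}\le \Delta u_1+e^{2u_1}\le 0$, so $(B_r,e^{2u_2}\delta)$ also has Gaussian curvature $\ge 1$. Writing $l_i(\rho)=2\pi\rho\,e^{u_i(\rho)}$ and $A_i(\rho)=\int_{B_\rho}e^{2u_i}\,dx$, the desired conclusion becomes $A_1(r)+A_2(r)\ge A_\infty$, where $A_\infty=\int_{\mathbb{R}^2}e^{2u_1}\,dx$ as in Theorem \ref{maintheoremonradialcase}.

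Next I would introduce the auxiliary function $v=u_1-u_2$, which satisfies $v>0$ in $B_r$ and $v=0$ on $\partial B_r$. A conformal computation in the metric $g=e^{2u_1}\delta$ turns the hypothesis $\Delta u_2+e^{2u_2}\le \Delta u_1+e^{2u_1}$ into the pointwise inequality $-\Delta_{g}v\le 1-e^{-2v}$. Integrating this over the superlevel sets $\Omega_t=\{v>t\}$, which are concentric disks $B_{\rho(t)}$ once $v$ is seen to be radially decreasing, and then applying the divergence theorem, the coarea formula, and Cauchy--Schwarz on each level circle, I obtain $P(t)^2\le \bigl(V(t)-G(t)\bigr)\,(-V'(t))$, where $V(t)=A_1(\rho(t))$, $G(t)=A_2(\rho(t))$ and $P(t)=l_1(\rho(t))$. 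Feeding in the first inequality of \eqref{leibi}, namely $P(t)^2\ge V(t)\bigl(A_\infty-V(t)\bigr)$, produces the governing differential inequality
\begin{align*}
V(t)\bigl(A_\infty-V(t)\bigr)\le \bigl(V(t)-G(t)\bigr)\,(-V'(t)),
\end{align*}
together with the elementary relation $G'(t)=e^{-2t}V'(t)$, which comes from the identity $e^{2u_2}=e^{-2t}e^{2u_1}$ holding on $\{v=t\}$.

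Finally I would close this differential inequality. Parametrising by $V$ and using $dG/dV=e^{-2t}\le 1$, the inequality above bounds $t(V)$ from above, hence $e^{-2t}$ from below, which is exactly the direction needed to bound $G(0)=A_2(r)$ from below; eliminating $t$ recasts it as the second–order inequality $G''\le \tfrac{2(V-G)}{V(A_\infty-V)}\,G'$ with $G=0$ at $V=0$. The conclusion $A_1(r)+A_2(r)\ge A_\infty$ then follows by a Gronwall/comparison argument against the explicit equality profile: after a scaling one may take $u_1=\ln\frac{2}{1+|x|^2}$ (so $A_\infty=4\pi$), for which $V=4\pi\rho^2/(1+\rho^2)$ and the complementary cap gives $G=4\pi\rho^2/(r^4+\rho^2)$ with $r>1$, and these solve the corresponding equality ODE with $V_0+G_0=A_\infty$. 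I expect this comparison to be the main obstacle: the inequality produced by \eqref{leibi} runs \emph{opposite} to the direction that a naive monotonicity argument would want (for instance the quadratic $V(A_\infty-V)-G(A_\infty-G)$ is \emph{not} monotone along the flow), so one must genuinely integrate the second–order inequality against the equality solution rather than term by term, and take care with the sign bookkeeping inherited from $u_2<u_1$. For the rigidity statement I would trace the two sources of slack: equality in \eqref{leibi} forces $(\mathbb{R}^2,g)$ to be a round sphere, so $u_1$ is a standard bubble, while equality in the governing ODE forces $u_2$ to generate precisely the complementary cap, yielding the stated characterisation. This parallels, with reversed inequalities, the proof of Theorem \ref{radialspherecovering} and the scheme of Theorem \ref{genghm}.
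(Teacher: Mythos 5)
Your reduction is the same one the paper intends (it proves Theorem \ref{radialspherecovering} in detail and then says the dual follows ``because of Theorem \ref{genghm'}''): pass to the conformal metric $g=e^{2u_1}\delta$, run a level-set argument for the difference of the two functions, and feed in the first inequality of \eqref{leibi}. Your derivation of the pointwise inequality $-\Delta_g v\le 1-e^{-2v}$, of $P^2\le (V-G)(-V')$, and of $G'=e^{-2t}V'$ is correct. But there are two gaps. The minor one: you assert the superlevel sets $\{v>t\}$ are concentric disks ``once $v$ is seen to be radially decreasing'' --- nothing in the hypotheses forces $v=u_1-u_2$ to be radially monotone, so $\{v>t\}$ may be an annulus or a disjoint union of annuli and a disk. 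This is why the paper's proof of Theorem \ref{radialspherecovering} first establishes the claim \eqref{liantongargument}, extending $P^2(\omega)\ge A(\omega)(A_\infty-A(\omega))$ from disks to annuli and to disconnected radially symmetric sets; you need that extension here too, and it is not automatic.

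The serious gap is the closing step, which you yourself flag as ``the main obstacle'' and do not carry out. A Gronwall/comparison argument for the second-order inequality $G''\le \tfrac{2(V-G)}{V(A_\infty-V)}G'$ against the explicit cap profile is not obviously well-posed in the right direction, and you give no mechanism for it. The paper needs none of this: exactly as in the proof of Theorem \ref{genghm'} with $h(s)=e^{2s}$ (so $\lambda\equiv 1$), one multiplies the level-set inequality by the appropriate exponential factor, recognizes that both sides combine into exact derivatives plus a term of fixed sign, and integrates \emph{once} over the full range of $t$, using that $V,G\to 0$ at the top level. This yields the single quadratic inequality
\begin{align*}
A_\infty\,G(0)-G(0)^2 \;\ge\; A_\infty\,V(0)-V(0)^2,
\end{align*}
i.e. $\bigl(V(0)-G(0)\bigr)\bigl(V(0)+G(0)-A_\infty\bigr)\ge 0$; since $u_2<u_1$ gives $G(0)=\int_{B_r}e^{2u_2}\,dx<V(0)=\int_{B_r}e^{2u_1}\,dx$, dividing by the positive factor gives $A_1(r)+A_2(r)\ge A_\infty$ directly. (Note the sign of $V(0)-G(0)$ is precisely where the dual hypothesis $u_2<u_1$ enters, mirroring the role of $h(0)\ge 1$ in Theorem \ref{genghm'}.) Your rigidity discussion is fine in outline --- equality forces equality in \eqref{leibi}, hence by Remark \ref{bushang} the base manifold is a punctured round sphere, and then the equality case of \cite{GM} identifies the two caps --- but as written the core inequality of the theorem is not actually proved.
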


These results are proved in section 7.

We conjecture that the above theorem holds for nonradial solutions and for general smooth domains, but so far we cannot validate this conjecture. In section 8, we list a series of important unsolved problems related to the results in this paper for future research. 

To the end, we remark that one of the essentials in the proof of L\'evy-Gromov isoperimetric inequality requires the diameter bounded above by $\pi$, which is guaranteed by Myer's Theorem on complete Riemannian manifolds with $Ric_g \ge (n-1)g$. For $n=2$, this is equivalent to $K \ge 1$. Hence in order to develop new sphere covering inequality related to incomplete globally conformally flat surface with $K \ge 1$, the first step is to try to prove some diameter bound. This has been another motivation for us to study diameter estimates for solutions and supersolutions to the Liouville equation \eqref{scalliouville}.

\section{Diameter lower bound for solutions to \eqref{scalliouville}}
In this section, we first prove Theorem \ref{lowerbounddiameterfor2D}. Before proving this, we state the following well-known lemma which is essentially proved in \cite{CW94}. Here we include a proof for readers' convenience. 

\begin{lemma}
Let $f(z)$ be the developing function for the solution $u$ to \eqref{scalliouville}, then $f$ is either a Mobi\"us transform or transcendental meromorphic. For the former case, $u$ is radially symmetric up to translations. 
\end{lemma}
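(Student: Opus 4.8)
The plan is to split the two alternatives using the elementary fact that a meromorphic function on $\mathbb{C}$ is either rational or transcendental, and then to show that the representation \eqref{complexrepresentation}, together with the two standing hypotheses on the developing function $f$ (only simple poles, and $f'$ nowhere vanishing), forces a \emph{rational} $f$ to be a Möbius transformation. Thus the real content is the implication ``$f$ rational $\Rightarrow$ $\deg f=1$''. I would prove this by regarding $f$ as a branched cover $f\colon\widehat{\mathbb{C}}\to\widehat{\mathbb{C}}$ of degree $d=\deg f\ge 1$ (the degree is at least $1$ because $f'\not\equiv0$ makes $f$ nonconstant) and invoking the Riemann–Hurwitz formula $\sum_{p}(e_p-1)=2d-2$, where $e_p$ is the local ramification index at $p$.

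The key step is to show that \emph{all} ramification is concentrated at $z=\infty$. At a finite point where $f$ is holomorphic, $e_p>1$ is equivalent to $f'(p)=0$, which is excluded by the hypothesis that $f'$ is nonvanishing; at a finite pole the simple-pole hypothesis gives local degree $e_p=1$, so there is no ramification there either. Hence the Riemann–Hurwitz sum collapses to $e_\infty-1=2d-2$, i.e. $e_\infty=2d-1$. Since a local degree can never exceed the global degree, $e_\infty\le d$, so $2d-1\le d$ and therefore $d\le1$. Thus $d=1$ and $f$ is Möbius; and if $f$ is not rational it is transcendental meromorphic by definition, which establishes the dichotomy.

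For the second assertion, suppose $f$ is Möbius. The metric $g=e^{2u}\delta$ determined by \eqref{complexrepresentation} is exactly the pullback $f^{*}g_{\mathrm{round}}$ of the round metric $\tfrac{4\,|dw|^2}{(1+|w|^2)^2}$ on $\widehat{\mathbb{C}}$, and this pullback is unchanged if $f$ is replaced by $M\circ f$ for any rotation $M\in\mathrm{PSU}(2)$, since such $M$ are isometries of $g_{\mathrm{round}}$. I would exploit this freedom to normalize $f$ to an affine map: writing $w_0=f(\infty)$ and choosing a rotation $M$ with $M(w_0)=\infty$ (possible because $\mathrm{PSU}(2)$ acts transitively on the sphere), the map $\tilde f:=M\circ f$ fixes $\infty$ and is therefore of the form $\tilde f(z)=\alpha z+\beta$ with $\alpha\neq0$. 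Substituting $\tilde f$ into \eqref{complexrepresentation} gives
\begin{align*}
u(z)=\ln\frac{2|\alpha|}{1+|\alpha|^{2}\,|z-z_0|^{2}},\qquad z_0=-\beta/\alpha,
\end{align*}
which is radially symmetric about $z_0$ and is precisely a standard bubble \eqref{bubble} centered at $z_0$. Hence $u$ is radially symmetric up to translation.

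The main obstacle is the first implication: one must verify that the two hypotheses on $f$ genuinely exhaust the possible locations of ramification, so that Riemann–Hurwitz can be made to contradict $e_\infty\le d$ for every $d\ge2$; in particular the simple-pole hypothesis is used crucially to guarantee that poles contribute nothing to the ramification count. An elementary alternative — writing $f=P/Q$ in lowest terms and arguing that the numerator $P'Q-PQ'$ of $f'$ must be a nonzero constant — is tempting, but it runs into delicate bookkeeping in the balanced case $\deg P=\deg Q$, where the leading terms of $P'Q$ and $PQ'$ cancel. The Riemann–Hurwitz argument treats all cases uniformly and avoids this, so it is the route I would follow.
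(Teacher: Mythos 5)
Your proof is correct, but it takes a genuinely different route from the paper's. For the dichotomy, the paper argues entirely with elementary algebra: writing $f=P/Q$ in lowest terms, it uses the nonvanishing of $f'$ to show the Wronskian $P'Q-PQ'$ is a nonzero constant, differentiates to get $P''Q=PQ''$, and then counts roots (using that $P$ and $Q$ each have only simple zeros) to force $P''\equiv Q''\equiv 0$, hence $P,Q$ linear. You instead view $f$ as a degree-$d$ branched cover of $\widehat{\mathbb{C}}$ and apply Riemann--Hurwitz: since the nonvanishing of $f'$ kills ramification at finite regular points and the simple-pole hypothesis kills it at finite poles, all $2d-2$ units of ramification must sit at $\infty$, forcing $2d-1=e_\infty\le d$ and hence $d=1$. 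Both arguments are sound; yours is shorter and more conceptual (it makes transparent exactly how the two hypotheses on $f$ are used), while the paper's is self-contained and needs nothing beyond polynomial arithmetic. I would note that your stated worry about ``delicate bookkeeping in the balanced case $\deg P=\deg Q$'' for the Wronskian approach is not actually an issue --- the paper's argument never needs to compare degrees of $P'Q$ and $PQ'$, only to observe that a nonvanishing polynomial is constant --- but this does not affect the validity of your route. For the second assertion, your normalization of a M\"obius $f$ to an affine map by post-composing with a rotation in $\mathrm{PSU}(2)$ (an isometry of the round metric, hence leaving $e^{2u}\delta=f^*g_{\mathrm{round}}$ unchanged) is a cleaner derivation than the paper's appeal to ``direct computation,'' and it correctly exhibits $u$ as a standard bubble centered at $z_0=-\beta/\alpha$.
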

\begin{proof}
If $f$ is rational, then write $f=\frac{P}{Q}$ where both $P$ and $Q$ are polynomials over $\mathbb{C}$. We assume $P$ and $Q$ have no common factors. Since $f$ has at most simple poles, $Q$ has distinct simple roots. We claim that 
\begin{align}
\label{claim}
    P'Q-PQ'\equiv C \ne 0.
\end{align}
Indeed, $f'=\frac{P'Q-PQ'}{Q^2}$. If $P'Q-PQ'$ and $Q^2$ have common factors, say $z-z_0$, then since $Q(z_0)=0$, $(P'Q-PQ')(z_0)=0$, and thus $P(z_0)Q'(z_0)=0$. Since $Q$ has simple roots, $Q'(z_0)\ne 0$, and thus $P(z_0)=0$. This contradicts our assumption that $P$ and $Q$ have no common factors. Therefore, $P'Q-PQ'$ and $Q^2$ have no common factors, and since $f'$ is nonvanishing, it is necessary that $P'Q-PQ'$ is a constant.

Take one more derivative of \eqref{claim}, we have 
\begin{align}
    \label{claim'}
P^{''}Q=PQ^{''}.    
\end{align}
Note that $f'$ nonvanishing also implies that $P$ has only simple roots. Let $z_0$ be any root of $Q$, then from \eqref{claim'} and since $P(z_0)$ cannot be zero, $Q^{''}(z_0)=0$. Hence the number of roots of $Q^{''}$ is less than or equal to the number of roots of $Q$. Since $Q$ is polynomial, it is necessary that $Q^{''}$ must be zero, and thus $Q$ is linear.

Similarly, $P^{''}=0$ and thus $P$ is linear. Therefore, $f$ is a Mobi\"us transform. In this case, by \eqref{complexrepresentation}, direct computation implies that $u$ is a radial solution up to translation.

If $f$ is not rational, then by Liouville's result, it is transcendental meromorphic. This finishes the proof.
\end{proof}
Now we are ready to prove Theorem \ref{lowerbounddiameterfor2D}.

\begin{proof}[Proof of Theorem \ref{lowerbounddiameterfor2D}]
Any solution $u$ to \eqref{scalliouville} can be written in the form
\begin{align}
    u=\ln \left(\frac{2|f'(z)|}{1+|f(z)|^2}\right),
\end{align}where $f$ is a meromorphic function such that $f'(z) \ne 0$ and $f$ has simple poles. As discussed before, if $f$ is rational, then $f$ must be a Mobi\"us transformation and hence $u$ is a bubble solution, and thus $g$ is the standard sphere metric. Hence $diam(\mathbb{R}^2)=\pi$ under such metric.

If $f$ is transcendental meromorphic, then by value distribution theory, $f$ takes the value in the complex plane infinitely many times except for two points. In particular, for any $\epsilon>0$ we can choose $z_1=(x_1,y_1)$ and $z_2=(x_2,y_2)$ such that $|f(z_1)|<\epsilon$ and $|f(z_2)|>1/\epsilon$. Let $\gamma(t)=z(t);\, a \le t \le b$ be a curve starting at $z_1$ and ending at $z_2$, and hence the length of $\gamma$ under the Euclidean metric is larger than $1/\epsilon-\epsilon$. Let $ \mathscr{H}^0(S)$ denote the
0-Hausdorff measure of a set $S\in \mathbb{R}^2$,  i.e. the number of points in $S$  if it is a finite set. 
Then the length of $\gamma$ under the conformal metric $e^{2u}\delta$ is given by
\begin{align*}
    \int_a^b e^{u(z(t))}|z'(t)|dt =&\int_a^b \frac{2|f'(z(t))|}{1+|f(z(t))|^2}|z'(t)|dt\\
    =& \int_{f(\gamma)}\frac{2}{1+|w|^2}\mathscr{H}^0\left(f^{-1}(w)\cap \gamma\right)|dw|\\
    \ge &\int_0^{1/\epsilon-\epsilon}\frac{2}{1+|w(s)|^2}ds, \quad \mbox{where $s$ is the arc length parameter}\\
    \ge & \int_{0}^{1/\epsilon-\epsilon}\frac{2}{1+(\epsilon+s)^2}ds, \quad \mbox{since $|w(s)|\le |w(0)|+|w(s)-w(0)|\le \epsilon+s$}\\
    =&2\left( \tan^{-1}(1/\epsilon)-\tan^{-1}(\epsilon)\right)\\
    \rightarrow & \pi \quad \mbox{as $\epsilon \rightarrow 0$.}
\end{align*}Hence $diam_g(\mathbb{R}^2) \ge \pi$ under the metric $g=e^{2u}\delta$.
\end{proof}

The proof above does not quite tell whether or not there exist two point $P,Q \in \mathbb{R}^2$ such that their conformal distance under the metric $g=e^{2u}\delta$ is bigger than or equal to $\pi$. Actually we have the following even stronger conclusion.

\begin{proposition}
\label{manypoints}
Let $u$ be a solution to \eqref{scalliouville} and $f(z)$ be its developing function. Then there exists a set $S$ such that $\mathscr{H}^0(\mathbb{C} \setminus S) \le 5$ and that for any point $A \in S$, we can find point $P \in f^{-1}(\{A\})$ and $Q \in \mathbb{C}$ such that $d_g(P,Q) \ge \pi$, where $d_g$ is the distance function under the conformal metric $g=e^{2u}\delta$.
\end{proposition}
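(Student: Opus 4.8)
The plan is to reinterpret the developing map $f$ as a \emph{local isometry} from $(\mathbb{R}^2,g)$ onto the round sphere, and then to read off the distance bound from the antipodal structure of the sphere. Writing $w=f(z)$ and recalling that the curvature-$1$ round metric on $\mathbb{S}^2$ is $g_0=\frac{4}{(1+|w|^2)^2}|dw|^2$ in stereographic coordinates, a direct computation gives
\begin{equation*}
f^*g_0=\frac{4|f'(z)|^2}{(1+|f(z)|^2)^2}\,|dz|^2=e^{2u}\,\delta=g.
\end{equation*}
Since $f'$ is nonvanishing, $f$ is a local isometry from $(\mathbb{R}^2,g)$ to $(\mathbb{S}^2,g_0)$; in particular it preserves the length of every curve, hence is distance non-increasing, so that
\begin{equation*}
d_g(P,Q)\ \ge\ d_{\mathbb{S}^2}\bigl(f(P),f(Q)\bigr)\qquad\text{for all }P,Q\in\mathbb{R}^2.
\end{equation*}
This is exactly the mechanism already at work in the proof of Theorem \ref{lowerbounddiameterfor2D}: the factor $\mathscr{H}^0\!\left(f^{-1}(w)\cap\gamma\right)\ge 1$ there records that $f$ never shortens a curve.

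With this comparison in hand, producing points at conformal distance $\ge\pi$ reduces to producing preimages of antipodal points on $\mathbb{S}^2$, because $d_{\mathbb{S}^2}$ attains its maximal value $\pi$ precisely on antipodal pairs. In the coordinate $w$ the antipodal involution is $\sigma(w)=-1/\bar w$ (with $\sigma(0)=\infty$). Thus, for a target value $A\in\mathbb{C}$, if I can find $P\in f^{-1}(\{A\})$ and $Q\in f^{-1}(\{\sigma(A)\})$, then $f(P)=A$ and $f(Q)=\sigma(A)$ are antipodal and therefore $d_g(P,Q)\ge d_{\mathbb{S}^2}(A,\sigma(A))=\pi$. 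The sole requirement is that both $A$ and its antipode $\sigma(A)$ belong to the range of $f$.

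It then remains to see that this requirement fails for only finitely many $A$. Let $E\subset\widehat{\mathbb{C}}$ be the set of values omitted by $f:\mathbb{R}^2\to\widehat{\mathbb{C}}$. If $f$ is a M\"obius transform it is injective and omits a single value, so $\#E=1$; if $f$ is transcendental meromorphic, Picard's theorem gives $\#E\le 2$. A value $A$ is admissible as soon as $A\notin E$ and $\sigma(A)\notin E$, i.e. $A\notin E\cup\sigma(E)$. Setting $S=\mathbb{C}\setminus\bigl(E\cup\sigma(E)\bigr)$ we obtain $\mathscr{H}^0(\mathbb{C}\setminus S)\le \#E+\#\sigma(E)\le 4\le 5$, which is the claimed bound, and for each $A\in S$ the pair $(P,Q)$ above satisfies $d_g(P,Q)\ge\pi$. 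The value $A=0$ causes no trouble, since its antipode is $\infty$, which is attained at a pole $Q\in\mathbb{R}^2$ of $f$.

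The two places needing care are the following. First, one must verify that $f$ is genuinely a local isometry across its poles, so that the comparison $d_g\ge d_{\mathbb{S}^2}$ holds globally (and not merely on $\mathbb{R}^2$ minus the pole set) and for the incomplete manifold $(\mathbb{R}^2,g)$; this is where one uses that $f$ is meromorphic with $f'\ne 0$ and that $g_0$ extends smoothly through $w=\infty$. Second, the exceptional-set bookkeeping should treat $\widehat{\mathbb{C}}$ uniformly, so that $0$, $\infty$, and the omitted values are all handled on the same footing, and the M\"obius case must be recorded separately since $f$ is then not transcendental. I expect the length-preservation through the poles and the resulting sharp distance comparison in the incomplete setting to be the principal technical obstacle, while the value-distribution count is routine once Picard's theorem is invoked.
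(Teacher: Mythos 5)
Your proposal is correct and follows essentially the same route as the paper: the paper's computation with the counting function $\mathscr{H}^0\left(f^{-1}(\omega)\cap\gamma\right)\ge 1$ is precisely the statement that $\Pi^{-1}\circ f$ is length non-increasing onto the round sphere, and the paper likewise produces $P,Q$ as preimages of an antipodal pair and bounds the exceptional set via value distribution theory (its sets $X_1,X_2$ encode exactly your condition $A\notin E\cup\sigma(E)$, with $f(\mathbb{C})\cap X$ playing the role of your $S$). Your bookkeeping even yields the slightly sharper count $\le 4$, which of course still satisfies the stated bound $\le 5$.
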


\begin{proof}
Let $\Pi$ be the stereographic projection map from the north pole of unit sphere to the extended complex plane $\mathbb{C}\cup \{\infty\}$. Let $$X_1=\{A \in \mathbb{C}:f^{-1}\circ \Pi(\mbox{the antipodal of $\Pi^{-1}(A)) \ne \emptyset$} \},$$
$$X_2=\{A \in \mathbb{C}:\mbox{ the antipodal of $\Pi^{-1}(A)$ is not the north pole } \},$$
and
$$X=X_1\cap X_2.$$
Clearly $\mathscr{H}^0 (\mathbb{C} \setminus X) \le 2$ if $f$ is a Mobi\"us transform, and $\mathscr{H}^0 (\mathbb{C} \setminus X) \le 3$ if $f$ is transcendental meromorphic, by value distribution theory. 
Let $S=f(\mathbb{C}) \cap X$, and thus $\mathscr{H}^0(\mathbb{C} \setminus S) \le 5$.
For any $A \in S$, we can find $P \in f^{-1}(A)$. Moreover, by definition of $S$ we can find 
$$Q \in f^{-1}\circ \Pi(\mbox{the antipodal of $\Pi^{-1}(A)$}).$$
Let $\gamma$ be any curve in $\mathbb{C}$ from $P$ to $Q$, then the length of $\gamma$ is given by
\begin{align*}
    \int_{\gamma} \frac{2|f'(z)|}{1+|f(z)|^2} ds =& \int_{f(\gamma)}\frac{2}{1+|\omega|^2}\mathscr{H}^0\left(f^{-1}(\omega)\cap \gamma\right)|d\omega|\\
    \ge& \int_{f(\gamma)}\frac{2}{1+|\omega|^2}|d\omega|\\
    =&l_{S^2}\left(\Pi^{-1}(f(\gamma))\right)\\
    \ge & d_{S^2}(\Pi^{-1}\circ f(P),\Pi^{-1}\circ f(Q))\\
    =&d_{S^2}(\Pi^{-1}(A), \mbox{the antipodal of $\Pi^{-1}(A)$ })=\pi.
\end{align*}In the above $l_{S^2}$ is the length function on the unit sphere $S^2$ and $d_{S^2}$ is the sphere distance function. From the above estimate we immediately conclude that $d_g(P,Q) \ge \pi$. 
\end{proof}

\begin{remark}
If $u$ is a radial solution to \eqref{scalliouville}, then $diam_g(\mathbb{R}^2)=\pi$, since $(\mathbb{R}^2, e^{2u}\delta)$ is the standard unit sphere minus a point. 

One can also check that if $u$ is a $1$D solution to \eqref{scalliouville},  say $u(x,y)=\ln(\sech x)$, then under the corresponding conformal metric, $diam_g(\mathbb{R}^2)=\pi$, as we will see in the examples in next section. 
\end{remark}

We make one more remark on the $1D$ solutions:
\begin{remark}
\label{1dremark}
It is easy to prove that if $u$ is a $1$D solution, then its developing function $f(z)$ must have the form $f(z)=\frac{pe^{cz}-\bar{q}}{qe^{cz}+p}$, where $p,q\in \mathbb{C}$ and $|p|^2+|q|^2=1$. The converse is also true. 
\end{remark}

To the end of this section, we also remark that the diameter of $\mathbb{R}^2$ does not change under translation, rotation and scaling of solutions 
to the Liouville equation \eqref{scalliouville}, that is, if $g=e^{2u}\delta$ and 
$g_1=e^{2u_{\lambda, c,\omega}}\delta$ where $\lambda>0, c\in \mathbb{R}^2$, $\omega$ is a unit vector in $\mathbb{R}^2$ and
\begin{align*}
    u_{\lambda,c,\omega}=u(\lambda(\omega \cdot ((x,y)-c)))+\ln \lambda,
\end{align*}then $diam_{g}(\mathbb{R}^2)=diam_{g_1}(\mathbb{R}^2)$.

\section{Examples and Proof of Proposition \ref{utdiamter}}
In this section, we first show that for the family of solutions given by \eqref{u_t}, the diameters of $\mathbb{R}^2$ under the corresponding metrics can take all numbers in the interval $[\pi, 2\pi)$. Note that $u_t$ corresponds to the developing function $t+e^z$, and when $t=0$, $u_t$ is $1$D.

\begin{proof}[Proof of Proposition \ref{utdiamter}]
Since $u_t$ corresponds to the developing function $f(z)=t+e^z$, $u_t$ solves \eqref{scalliouville}.

To prove the diameter equality, first we note that 
\begin{align*}
    \int_{-\infty}^{\infty}e^{u_t(x,y)}dx=&\frac{2}{\sqrt{1+t^2\sin^2 y}}\tan^{-1}(\frac{t\cos y+e^x}{\sqrt{1+t^2 \sin^2 y}})\Big|_{-\infty}^{\infty}\\
    =&\frac{2}{\sqrt{1+t^2\sin^2 y}}\left(\frac{\pi}{2}-\tan^{-1}(\frac{t\cos y}{\sqrt{1+t^2 \sin^2 y}})\right).
\end{align*}
Hence \begin{align}
    \label{sup'}
\sup_{y \in \mathbb{R}}\int_{-\infty}^{\infty}e^{u_t(x,y)}dx=\pi+2\tan^{-1}(t).    
\end{align}
Given arbitrary two points $P_1, P_2 \in \mathbb{R}^2$, for each point $P_i\, (i=1,2)$, we let $\gamma_R^i$ be the horizontal line segment passing through $P_i$ with $Q_{-R}^i$ and $Q_R^i$ as the left and right end points of $\gamma_R^i$, such that $Q_{-R}^1 Q_R^1 Q_R^2Q_{-R}^2$ is a rectangle with length $2R$. Let $\Gamma_{R}$ be the vertical line segments connecting $Q_R^1$ and $Q_R^2$, and $\Gamma_{-R}$ be the vertical line segment connecting $Q_{-R}^1$ and $Q_{-R}^2$. By \eqref{sup'}, $l(\gamma_{R}^i) \le \pi+2\tan^{-1}(t)$ for $i=1,2$. Also, it is easy to see that $\lim_{R \rightarrow \pm\infty} l(\Gamma_{\pm R})=0$. Now that
\begin{align*}
    2d_g(P_1,P_2) \le  \sum_{i=1}^2l (\gamma_{R}^i)+ l(\Gamma_R)+l(\Gamma_{-R}),
\end{align*}by letting $R \rightarrow \infty$, we have that $d_g(P_1,P_2) \le \pi+2\tan^{-1}(t)$. Since $P_1$ and $P_2$ are arbitary, we have that $diam_g(\mathbb{R}^2) \le \pi+2\tan^{-1}(t)$.

To show that the equality holds, we choose $P_1=(a,\pi)$ and $P_2=(a,-\pi)$, where $a$ satisfies
\begin{align}
\label{newa}
    e^a-t=\tan(\frac{\pi}{4}-\frac{1}{2}\tan^{-1}(t)).
\end{align}
Such $a$ exists since $t+\tan(\frac{\pi}{4}-\frac{1}{2}\tan^{-1}(t)) \in [1,\infty)$ if $t \in [0,\infty)$. Let $\Pi$ be the stereographic projection map from the north pole of the unit sphere. Since $f(P_1)=f(P_2)=(t-e^a,0)$, by \eqref{newa} and double angle formula, for $i=1,2$ we have
\begin{align*}
    \Pi^{-1}(f(P_i))=(\frac{2(t-e^a)}{1+(t-e^a)^2},0,\frac{(t-e^a)^2-1}{(t-e^a)^2+1})=(-\sin(\frac{\pi}{2}-\tan^{-1}(t)),0,-\cos(\frac{\pi}{2}-\tan^{-1}(t))).
\end{align*}Let $\alpha=\tan^{-1}(t)$, hence $\alpha \in [0,\frac{\pi}{2})$ and
\begin{align}
\label{P_1}
    \Pi^{-1}(f(P_1))=\Pi^{-1}(f(P_2))=(-\cos \alpha,0,-\sin\alpha).
\end{align}
Let $\gamma$ be a curve connecting $P_1$ and $P_2$, then $\gamma$ must intersect the $x$-axis. Let us assume that $\gamma$ passes through $P_3=(b,0)$, and thus $f(b)=t+e^b>t$. Let $f(b)=\tan \beta$ for some $\beta \in (0,\frac{\pi}{2})$. So $\beta >\alpha$. and 
\begin{align}
\label{P_3}
    \Pi^{-1}(f(P_3))=(\frac{2\tan\beta}{1+\tan^2\beta},0,\frac{\tan^2\beta-1}{1+\tan^2\beta})=(\sin(2\beta),0,-\cos(2\beta)).
\end{align}
Let $\theta \in (0,\pi)$ be the angle between $\Pi^{-1}(f(P_1))$ and $\Pi^{-1}(f(P_3))$. Then by \eqref{P_1} and \eqref{P_3}, 
\begin{align}
\label{costheta}
    \cos \theta=-\cos \alpha \sin(2\beta)+\sin \alpha \cos(2\beta)=\cos(\frac{\pi}{2}+2\beta-\alpha).
\end{align}
If $\pi/2+2\beta-\alpha<\pi$, then by \eqref{costheta} we know $\theta=\pi/2+2\beta-\alpha>\pi/2+\alpha$.
If $\pi/2+2\beta-\alpha>\pi$, then since $\beta <\frac{\pi}{2}$, we still have
\begin{align*}
    \theta=2\pi- (\pi/2+2\beta-\alpha)=\frac{3\pi}{2}+\alpha-2\beta>\frac{\pi}{2}+\alpha.
\end{align*}
Therefore,
\begin{align}
    \label{chonglai}
 d_{g_{S^2}}(\Pi^{-1}(f(P_1)),\Pi^{-1}(f(P_3)))\ge \frac{\pi}{2}+\alpha.    
\end{align}
By \eqref{complexrepresentation}, we have 
\begin{align}
\label{huanyuan}
    l(\gamma)=\int_{f(\gamma)}\frac{2}{1+|\omega|^2}\mathscr{H}^{0}(f^{-1}(\omega)\cap \gamma)|d\omega|
    \ge\int_{f(\gamma)}\frac{2}{1+|\omega|^2}|d\omega|.
\end{align}
Note that the metric of the unit sphere is given by $g_{S^2}=\frac{4}{(1+|\omega|^2)^2}\delta$, where $\omega$ denotes the coordinate of point on sphere obtained using stereographic projection map from the north pole. Hence from \eqref{huanyuan}, \begin{align}
    \label{chonglai2}
l(\gamma)\ge l_{g_{S^2}}(\Pi^{-1}(f(\gamma)),    
\end{align}
where $l_{g_{S^2}}$ is the length function on the unit sphere. 

Since  $\Pi^{-1}\circ f (\gamma)$ is a curve in $S^2$ such that it starts from $\Pi^{-1}(f(P_1))$, goes through $\Pi^{-1}(f(P_3))$ and then goes back to $\Pi^{-1}(f(P_2))=\Pi^{-1}(f(P_1))$, by \eqref{chonglai} and \eqref{chonglai2} we know that the length of $\gamma$ satisfies
\begin{align*}
    l(\gamma) \ge 2d_{g_{S^2}}(\Pi^{-1}(f(P_1)),\Pi^{-1}(f(P_3)))\ge \pi+2\alpha.
\end{align*}
That is, $l(\gamma)\ge \pi+2\tan^{-1}(t)$. Hence $diam_g(\mathbb{R}^2) \ge  \pi+2\tan^{-1}(t)$.

Therefore, we have shown that $diam_g(\mathbb{R}^2) = \pi+2\tan^{-1}(t)$.
\end{proof}

Here we make a remark. At the beginning of this project, we only knew that there exist a family of solutions given by \eqref{u_t}, and by just naively looking at the horizontal integrals, we figured out the upper bound for the corresponding diameters. It is quite straightforward up to this step. However, it took us quite a while to find out a way of exactly computing the conformal diameters corresponding to these functions $u_t$. Let us use the following example to geometrically illustrate the case $t=1$. In fact, this example is so important to us, that only after understanding it did we observe Proposition \ref{manypoints} and have a better understanding of the conformal diameters corresponding to solutions to Liouville equation \eqref{scalliouville}. So we will present the full details including some numerical computations as motivations in the example.

\begin{example}
\label{bifurcationof1dsolution}
Let \begin{align}
\label{1+e^z}
    u(x,y)=u_1(x,y)=\ln\left(\frac{2e^x}{2+2e^x\cos y+e^{2x}}\right).
\end{align}Then clearly $u$ solves \eqref{scalliouville}.  In the following, we will gradually show that $diam_g(\mathbb{R}^2)=\frac{3\pi}{2}$ if $g=e^{2u}\delta$ for such $u$. 

First, note that 
\begin{align}
\label{jisuan1}
    \int_{-\infty}^{\infty} e^{u(x,y)}dx=\frac{2}{\sqrt{1+\sin^2 y}}\tan^{-1}\left(\frac{\cos y+e^x}{\sqrt{1+\sin^2 y}}\right)+C
\end{align}and that
\begin{align}
\label{jisuan2}
    \int_0^{\pi} e^{u(x,y)}dy =\frac{\pi}{\sqrt{(e^{-x}+\frac{e^x}{2})^2-1}}.
\end{align}
By \eqref{jisuan1}, we have that
\begin{align}
\label{sup}
    \sup_{y \in \mathbb{R}} \int_{-\infty}^{\infty} e^{u(x,y)} dx=\frac{3\pi}{2},
\end{align}
and hence by exact argument in the proof of Proposition \ref{utdiamter}, we have that $diam_g(\mathbb{R}^2) \le \frac{3\pi}{2}$. So the question is, can the equality be attained?

Let us consider somehow the worst case. By \eqref{jisuan1}, the supremum of \eqref{sup} is attained at $y=(2k+1)\pi, \, k \in \mathbb{Z}$. Let us choose two points $P_1=(a,\pi)$ and $P_2=(a,-\pi)$, where $a \in \mathbb{R}$ such that 
\begin{align}
\label{intchoiceofa}
    \int_{-\infty}^a e^{u(x,\pm\pi)}dx=\int_a^{\infty} e^{u(x,\pm\pi)}dx=\frac{3\pi}{4}.
\end{align}One can see that $P_1$ is chosen to lie in the ``middle" way of the horizontal line from $(-\infty,\pi)$ to $(\infty,\pi)$, and similarly for $P_2$. From the choices, we can expect that the distance between $P_1=(a,\pi)$ and $P_2=(a,-\pi)$ is equal to $\frac{3\pi}{2}$ under the metric $g=e^{2u}\delta$. This guess can be supported by computing the length of ellipses connecting the two points:

Let $C_s\, (s>0)$ be the right half of the ellipses to connect $(a,\pi)$ and $(a,-\pi)$, and thus the equation of $C_s$ is given by:
\begin{align*}
    \frac{(x-a)^2}{s^2}+y^2=\pi^2.
\end{align*}
By setting $x=a+\pi \cos \theta$ and $y=\pi \sin \theta$, the length of $C_s$ under metric $g$ is given by
\begin{align}
\label{lcs}
    l(C_s)=\int_{-\pi/2}^{\pi/2}\frac{2\pi e^{a+s\pi \cos \theta}}{2+2e^{a+s\pi \cos \theta}\cos(\pi \sin \theta)+e^{2a+2s\pi \cos \theta}} \sqrt{s^2\sin^2\theta+\cos^2\theta}\, d\theta.
\end{align}
By \eqref{jisuan1} and \eqref{intchoiceofa}, we have
\begin{align}
\label{defofa}
    tan^{-1}(e^a-1)=\frac{\pi}{8}.
\end{align}
The following is the graph obtained by Mathematica of the function $l(C_s)-\frac{3\pi}{2}$ for $s \in [0,10]$.\\

\includegraphics[scale=1]{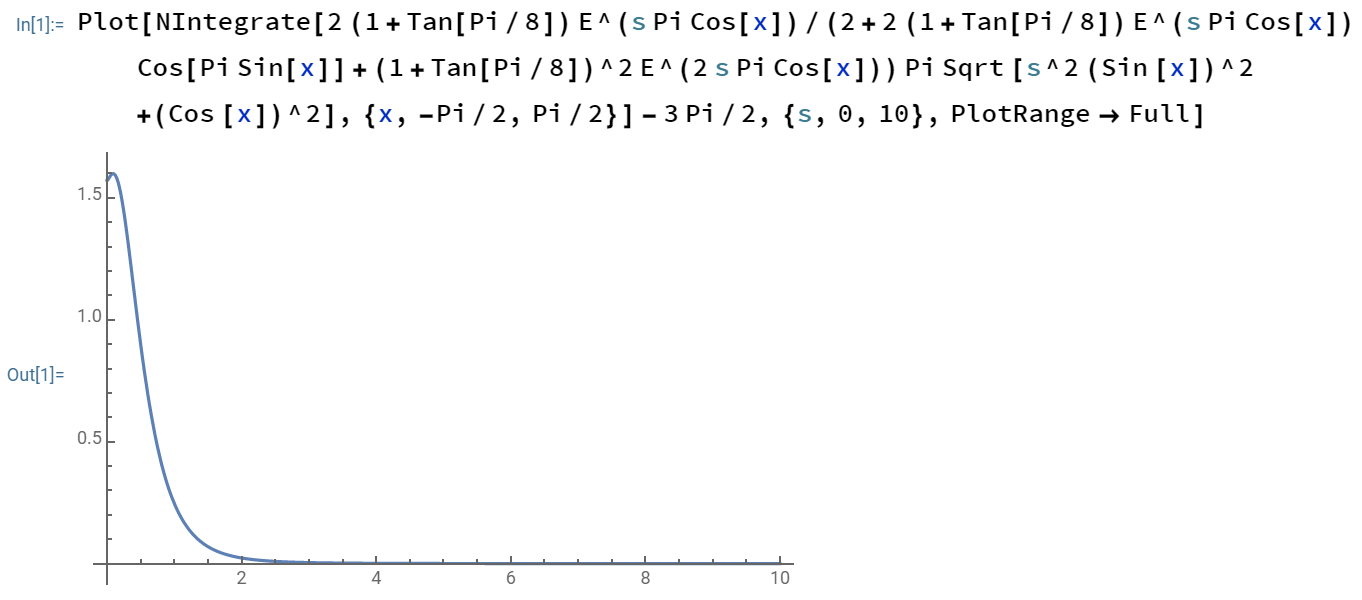}\\

The graph indicates that $l(C_s) \ge \frac{3\pi}{2}, \, \forall s>0$, and $\lim_{s \rightarrow \infty}l(C_s)=\frac{3\pi}{2}$. This suggests that $d_g(P_1, P_2) = \frac{3\pi}{2}$.  To rigorously prove it,  we proceed with the exact same proof as in that of Proposition \ref{utdiamter}, for $t=1$.
\begin{proof}
Let $\gamma$ be a curve starting at $P_1$ and ending at $P_2$, then $\gamma$ must pass through some point on $x$-axis, which is given by $P_3=(b,0)$. Let $P_i'=f(P_i)$, where $f(z)=1+e^z$, which is the developing function for the solution $u$. By \eqref{defofa}, $P_1'=P_2'=(-\tan(\pi/8),0)$. Also, $P_3'=(b',0)$, where $b'=1+e^b>1$. Then as in the proof of Proposition \ref{utdiamter}, we have $l(\gamma)\ge l_{g_{S^2}}(\Pi^{-1}(f(\gamma))$. 

By direct computation, $\Pi^{-1}(P_1')=\Pi^{-1}(P_2')=(-\frac{\sqrt{2}}{2},0,-\frac{\sqrt{{2}}}{2})$, and we denote the point by $A$. Hence $\Pi^{-1}(f(\gamma))$ must be a closed curve on the unit sphere such that it starts and ends at the point $A$, and it must passes through the point $B=\Pi^{-1}(P_3')$, which lies in the circle connecting $(1,0,0)$ and the north pole in the northern hemisphere. Therefore, $\Pi^{-1}(\gamma) \ge 2dist_{g_{S^2}}(A,B) \ge \frac{3\pi}{2}$. Hence $diam_g(\mathbb{R}^2)=\frac{3\pi}{2}$.
\end{proof}

An interesting fact is that, from the proof above, we can now rigorously prove that $$\lim_{s \rightarrow \infty}l(C_s)=\frac{3\pi}{2}.$$ Indeed, when $s$ goes to $+\infty$, $\Pi^{-1}(f(C_s))$ is getting closer and closer to the curve that starts from $(-\frac{\sqrt{2}}{2},0,-\frac{\sqrt{{2}}}{2})$, goes along the great circle to the north pole, and then goes back to $(-\frac{\sqrt{2}}{2},0,-\frac{\sqrt{{2}}}{2})$. Hence the total length is closer and closer to $\frac{3\pi}{2}$. 

Without using the geometry on unit sphere, it seems very difficult to handle the ``monster" integral \eqref{lcs}. 

This finishes the  discussion of  Example 3.1.
\end{example}

Next, we construct a solution to \eqref{scalliouville} such that the corresponding conformal diameter can be greater than or equal to $2\pi$.
\begin{example}
\label{2pi}
Let \begin{align}
    \label{infinitydiameter}
u(x,y)=\ln \left(\frac{2e^{x+e^x \cos y}}{1+e^{2e^x \cos y}}\right).   
\end{align}We will show that $u$ solves \eqref{scalliouville}, and that $diam_g(\mathbb{R}^2)\ge  2\pi$, where $g=e^{2u}\delta$.

\begin{proof}
First, such $u$ given by \eqref{infinitydiameter} corresponds to the developing function 
\begin{align*}
    f(z)=e^{e^z}.
\end{align*}
Hence $u$ is a solution to \eqref{scalliouville}.

Next, note that \begin{align*}
    f(z)=e^{e^x\cos y}\left(\cos (e^x \sin y)+i \sin (e^x \sin y)\right).
\end{align*}We consider the distance between the point $P=(\ln \pi,\frac{\pi}{2})$ and point $Q=(\ln \pi, -\frac{3\pi}{2})$. We have $f(P)=f(Q)=-1$ and $P':=\Pi^{-1}(f(P))=\Pi^{-1}(f(Q))=(-1,0,0)$, where $\Pi$ is the stereographic projection map from the north pole. 
Let $\gamma$ be a curve starting from $P$ to $Q$, then $\gamma$ must pass through a point $P_1=(b,0)$ and another point $P_2=(c,-\pi)$. Since $f(P_1)=e^{e^b}>1$, $P_1':=\Pi^{-1}\circ f(P_1)$ lies between the arc from $(1,0,0)$ to the north pole. Since $f(P_2)=e^{-e^c} \in (0,1)$, $P_2':=\Pi^{-1}\circ f(P_2)$ lies between the arc from $(1,0,0)$ to the south pole.

Let $d_g$ be the distance function under the metric $g=e^{2u}\delta$, $d_{S^2}$ be the distance function on the unit sphere $S^2$, and $l_{S^2}$ be the length function on the unit sphere. By the proof of Proposition \ref{manypoints} or Proposition \ref{utdiamter}, and since $\Pi^{-1}\circ f(\gamma)$ is a curve on the unit sphere starting from $P'$, passing through $P_1', P_2'$ and ending at $P'$, we have
\begin{align*}
    d_g(P,Q) \ge l_{S^2}(\Pi^{-1} \circ f (\gamma)) \ge d_{S^2}(P', P_1')+d_{S^2}(P_1',P_2')+d_{S^2}(P_2',P').
\end{align*}By the location of $P', P_1'$ and $P_2'$, we have
\begin{align*}
     d_g(P,Q) \ge 2\pi.
\end{align*}Hence $diam_g(\mathbb{R}^2 ) \ge 2\pi$.

\end{proof}

\end{example}

\section{Diameter upper bound for supersolutions to \eqref{scalliouville}}
In this section, our main goal is prove Theorem \ref{weakbounddiamter}.

In the following, we will constantly use $d_g(x,y)$ to denote the distance between two points $x$ and $y$ under the metric $g$, that is, the infimum of the lengths of piecewise smooth curves starting at $x$ and ending at $y$. Unlike previous sections, in this section $x$ and $y$ are denoted as points in Euclidean spaces, not coordinate components.

First, for the noncompact Riemannian manifold $(\mathbb{R}^n, g)$, it is convenient to introduce the following definition.
\begin{definition}
For any $p \in \mathbb{R}^n$, $d_g(p,\infty)$ is defined as 
\begin{align*}
    \inf\{l(\gamma): \mbox{$\gamma$ is a piecewise smooth curve from [0,1) to $\mathbb{R}^n$ with $\gamma(0)=p$ and $|\gamma(1^-)|=\infty$}\},
\end{align*}where $l$ is the length function under metric $g$ and $|\cdot|$ is the Euclidean norm. 
\end{definition}

Before stating certain geometric results related to $(\mathbb{R}^2, e^{2u}\delta)$, we first prove the following key observation, which does not rely on finiteness assumption on conformal volume. Actually the statement can be made in $\mathbb{R}^n$ as follows:
\begin{proposition}
\label{diametercontrolinfty}
Let $g$ be a Riemannian metric on $\mathbb{R}^n$ with $Ric_g \ge (n-1)g$, then we have $d_g(x, \infty)\le \pi$ for all $x \in \mathbb{R}^n$. 
\end{proposition}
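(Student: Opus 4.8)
The plan is to argue by contradiction: assume $d_g(x,\infty) > \pi$ (allowing the value $+\infty$) and manufacture a unit-speed geodesic issuing from $x$ that minimizes $d_g(x,\cdot)$ on an interval of length strictly greater than $\pi$. Myers' theorem --- or rather its local, second-variation core, which needs only the geodesic itself and not completeness --- then furnishes a conjugate point at parameter at most $\pi$, so such a geodesic cannot minimize past $\pi$; this contradiction forces $d_g(x,\infty) \le \pi$. The whole difficulty is that $(\mathbb{R}^n,g)$ need not be complete, so I cannot invoke Hopf-Rinow directly; instead I will imitate its proof, treating ``infinity'' as if it were the target point.

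First I would record an extension lemma. Because $g$ is a smooth metric defined on all of $\mathbb{R}^n$, the geodesic flow is a smooth flow on $T\mathbb{R}^n$, and on every Euclidean-compact set $g$ is comparable to the Euclidean metric; hence a maximal unit-speed geodesic $\gamma$ defined only on a finite interval $[0,L)$ must, by the escape lemma for flows, leave every compact subset of $\mathbb{R}^n$, i.e. $|\gamma(t)| \to \infty$ as $t \to L^-$. Such a $\gamma|_{[0,L)}$ is then an admissible curve to infinity of length $L$, so $d_g(x,\infty) \le L$. Consequently, under the standing assumption, every geodesic from $x$ is defined at least on $[0, d_g(x,\infty))$, hence on an interval strictly longer than $\pi$.

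Next I would run the Hopf-Rinow continuation with ``infinity'' as target. Writing $r := d_g(x,\infty)$, the map $q \mapsto d_g(q,\infty)$ is $1$-Lipschitz (triangle inequality through $q$), so for small $\delta>0$ it attains its minimum on the compact geodesic sphere $S_\delta := \exp_x(\delta\,\mathbb{S}^{n-1})$, at a point $p=\exp_x(\delta v)$; since any curve escaping from $x$ must cross $S_\delta$, the usual splitting gives $r = \delta + \min_{S_\delta} d_g(\cdot,\infty) = \delta + d_g(p,\infty)$. Setting $\gamma(t) := \exp_x(tv)$, I would show that the set of $t \in [0,r)$ with $d_g(\gamma(t),\infty) = r-t$ is nonempty, closed, and open to the right: the forward step is the standard argument that the minimizing concatenation $x \rightsquigarrow \gamma(t_0) \rightsquigarrow p'$, with $p'$ on a small sphere about $\gamma(t_0)$ realizing the local minimum of $d_g(\cdot,\infty)$, has no corner and therefore continues $\gamma$ straight. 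Hence $d_g(\gamma(t),\infty) = r-t$ for all $t \in [0,r)$, and combining with $d_g(x,\gamma(t)) \le t$ and $r \le d_g(x,\gamma(t)) + d_g(\gamma(t),\infty)$ yields $d_g(x,\gamma(t)) = t$; that is, $\gamma$ minimizes from $x$ on all of $[0,r)$.

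Finally, I would pick any $t_1 \in (\pi,r)$: then $\gamma|_{[0,t_1]}$ is a geodesic of length $>\pi$ that minimizes $d_g(x,\gamma(t_1))$, contradicting the conjugate-point estimate coming from $Ric_g \ge (n-1)g$, which proves $r \le \pi$. The degenerate case $r = +\infty$ is even easier: then $\exp_x$ is defined on all of $T_x\mathbb{R}^n$, so by Hopf-Rinow at the single point $x$ every point is joined to $x$ by a minimizing geodesic, and were all of these of length $\le \pi$ then $\mathbb{R}^n = \exp_x(\overline{B}(0,\pi))$ would be compact, which is absurd; so again some minimizing geodesic has length $>\pi$, contradicting Myers. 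I expect the delicate point to be the continuation step for the ray to infinity: one must verify that ``infinity'' behaves like a genuine target, i.e. that the first-sphere splitting holds and that the glued minimizer is corner-free, while simultaneously using the escape lemma to guarantee that the geodesic actually survives up to every parameter below $r$ at which the argument is applied.
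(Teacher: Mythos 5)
Your argument is correct, and it rests on the same two pillars as the paper's proof --- a Hopf--Rinow-type continuation that manufactures a long minimizing geodesic, followed by the second-variation core of Myers' theorem, which needs only the geodesic itself and not completeness --- but the implementation is genuinely different. The paper never treats $\infty$ as a target: for each Euclidean radius $r$ it picks $y \in \partial B_r(x)$ realizing $d_g(x,\partial B_r(x))$, runs the continuation toward that honest point (the key compactness being that a curve aiming at $y$ whose length is still below $d_g(x,y)$ cannot yet have reached $\partial B_r(x)$, hence stays in a compact subset of $B_r(x)$ where it can be prolonged), concludes $d_g(x,\partial B_r(x)) \le \pi$, and lets $r \to \infty$. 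You instead aim directly at infinity, which requires two ingredients the paper's route avoids: the ODE escape lemma (a maximal unit-speed geodesic with finite lifespan $L$ must exit every Euclidean compact set, hence is itself a competitor curve to infinity of length $L$, forcing every geodesic from $x$ to survive up to time $d_g(x,\infty)$), and the verification that $q \mapsto d_g(q,\infty)$ is $1$-Lipschitz and splits across small geodesic spheres exactly as a distance-to-a-point function would. Both check out, and the no-corner step you flag as delicate is just the local fact that minimizing curves are geodesics, which needs no completeness. What your route buys is a genuine unit-speed ray with $d_g(\gamma(t),\infty)=d_g(x,\infty)-t$ --- precisely the content of the paper's Remark \ref{chengshi}, which in the paper's formulation would still need a compactness argument on initial directions as $r\to\infty$; what the paper's route buys is that it never has to worry about geodesics ceasing to exist, since everything happens inside a fixed Euclidean ball. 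Your separate treatment of the case $d_g(x,\infty)=+\infty$ (global definition of $\exp_x$, Hopf--Rinow at the single point $x$, and noncompactness of $\mathbb{R}^n$) is also sound, and is needed, since the splitting identity degenerates there.
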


Proposition \ref{diametercontrolinfty} is related to Myer's Theorem. Recall that Myer's Theorem states that if $(M,g)$ is a geodesically complete manifold with $Ric \ge (n-1)$, then $diam(M) \le \pi$. Moreover, if the diameter is equal to $\pi$, then by Cheng's rigidity result in \cite{Cheng75} , $M$ must be isometric to sphere. 

However, the proof of Myer's theorem requires that for any $x, y \in M$, there is a minimizing geodesic connecting $x$ and $y$. For incomplete manifolds, generally two points cannot be connected by a length minimizing curve. So in order to prove Proposition \ref{diametercontrolinfty}, we need to somehow overcome the non-completeness issue. The proof is motivated by the idea in the proof of Hopf-Rinow theorem.

\begin{proof}[Proof of Proposition \ref{diametercontrolinfty}]
Fix $x \in \mathbb{R}^n$ and $r>0$, and $B_r(x)$ be the Euclidean $r$-neighborhood of $x$. Let $y \in \partial B_r(x)$ such that $d_g(x,y)=d_g(x,\partial B_r(x))=T$. Let $\gamma:[0,b]$ be the unit speed curve starting at $x$. We say $\gamma|_{[0,b]}$ \textit{aims at} $y$ if $\gamma$ starts at $x$ and that for any $0<t<b$, $l(\gamma|_{[0,t]})+d_g(\gamma(t),y)=d_g(x,y)$. Clearly $b \le T$. We claim that there exists such a curve starting at $x$ and aiming at $y$ with length equal to $T$.

We prove this claim by modifying the proof of Hopf-Rinow Theorem. Let $$S=\{b \in [0,T]: \mbox{there exists a unit speed curve $\gamma$ starting at $x$ such that $\gamma|_{[0,b]}$ aims at $y$}\}$$ and let $T_0=\sup S$. 

First, note that $S$ is not empty. This is because we can always choose a small geodesic $\epsilon$-neighborhood of $x$, denoted $B'_{\epsilon}(x)$. Since $d_g(\cdot,y)$ is continuous, there is $z \in \partial B'_{\epsilon}(x)$ such that $d_g(z, y)= d_g(y,\partial B'_{\epsilon})$. Let $\gamma$ be the minimizing geodesic connecting $x$ and $z$, and thus $\gamma|_{[0,\epsilon]}$ aims at $y$.

If $T_0<T$, then we claim that $\gamma|_{[0,T_0]}$ lies in a compact subset of $B_r(x)$. Indeed, if this is not the case, then there exists $w\in \partial B_r(x)$ such that $\gamma(t)=w$ for some $t \le T_0$. Then $d_g(x,w)\le T_0<T=d_g(x,y)$, which is a contradiction for the choice of $y$ above. Therefore, $\gamma|_{[0,T_0]} \Subset B_r(x)$. Hence again we can choose a small geodesic neighborhood of $\gamma(T_0)$ and this extends the length of the curve starting at $x$ aiming at $y$. Therefore, eventually we conclude that $T_0=T$.

Note that such $\gamma$ is actually a length minimizing geodesic connecting $x$ and $y$. Therefore, by the proof of Myer's Theorem, see for example \cite{Lee}, we have $l(\gamma) \le \pi$. Hence $d_g(x, \partial B_r(x)) \le \pi$. Since $r$ arbitrary, let $r \rightarrow\infty$ we have $d_g(x,\infty) \le \pi$. 
\end{proof}

\begin{remark}
\label{keyobservation}
The essential observation in the proof of of Proposition \ref{diametercontrolinfty} is that for any $x \in \mathbb{R}^n$ and $r>0$, we can find a point $y \in \partial B_r(x)$ such that there is a length minimizing curve connecting $x$ and $y$. We will use this fact often times in this paper. Note that this is generally not true for every $y \in \partial B_r(x)$.
\end{remark}

\begin{remark}
\label{chengshi}
Also from the proof above we know for any $x \in \mathbb{R}^n$, there exists a unit speed curve $\gamma$ starting at $x$ and aiming at $\infty$, that is, for any $0<t<d_g(x,\infty)$, $l(\gamma_{[0,t]})+d_g(\gamma(t),\infty)=d_g(x,\infty)$.
\end{remark}

\begin{remark}
The other triangle inequality $d_g(x,\infty)+d_g(y,\infty)\ge d_g(x,y)$ generally does not hold. This can be seen in $(\mathbb{R}^2,e^{2u}\delta)$, where $u(t)=\ln(\sech t)$ is the $1D$ solution to \eqref{scalliouville}. Let $x=(-R,0)$ and $y=(R,0)$, then $d_g(x,y)\rightarrow \pi$ as $R \rightarrow \infty$ while both $d_g(x,\infty)$ and $d_g(y,\infty)$ converge to $0$.
\end{remark}

Applying Proposition \ref{diametercontrolinfty} and assuming the finiteness of volume of $(\mathbb{R}^2,g)$, we now prove Theorem \ref{weakbounddiamter}.

\begin{proof}[Proof of Theorem \ref{weakbounddiamter}]
We first show there exists a sequence $r_k \rightarrow \infty$ such that $\int_{\partial B_{r_k}} e^{u} ds \rightarrow 0$. Suppose this is not the case, then there is $c>0$ such that for any $r>0$, $\int_{\partial B_{r}} e^{u} ds>c$. Hence by H\"older's inequality,
\begin{align*}
    2\pi r\int_{\partial B_r} e^{2u} ds \ge \left(\int_{\partial B_{r}} e^{u} ds\right)^2 \ge c^2.
\end{align*}
Hence $\int_{\partial B_r}e^{2u} ds \ge \frac{c^2}{2\pi r}$, and this would imply $\int_{\mathbb{R}^2} e^{2u} dx=\infty$. Thus we get a contradiction.

For any $x,y \in \mathbb{R}^2$, by Proposition \ref{diametercontrolinfty} and Remark \ref{chengshi}, we can choose curves $\gamma_1$ and $\gamma_2$ starting at $x$ and $y$ respectively aiming at $\infty$ with lengths less than or equal to $\pi$. Let $z_{k_i}=\partial B_{r_k} \cap \gamma_i,\, i=1,2$, where $r_k$ is the sequence above. Hence we have \begin{align*}
    dist_g(x,y)\le dist(x,z_{k_1})+dist(y,z_{k_2}) +dist_g(z_{k_1},z_{k_2})\le 2\pi+\int_{\partial B_{r_k}} e^{u} \rightarrow 2\pi
\end{align*}as $k \rightarrow \infty$. Therefore, $diam(\mathbb{R}^2)\le 2\pi$. 
\end{proof}

\section{geometric inequalities for radial supersolutions to \eqref{scalliouville}}
We first prove the following ordinary differential inequality results.
\begin{proposition}
\label{aiya}
Let $u=u(r)$ be a function satisfying 
\begin{align}
    \label{2dradialcase}
u^{''}+\frac{u'}{r}+e^{2u}\le 0,    
\end{align} then
\begin{align}
    \label{radlength}
 \int_0^{\infty} e^{u(r)}dr \le \pi.    
\end{align}
\end{proposition}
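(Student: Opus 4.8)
The plan is to convert the radial differential inequality \eqref{2dradialcase} into a form that controls the growth of the quantity $R(r) = \int_0^r e^{u(\rho)}\,d\rho$, whose value at infinity is exactly the integral we wish to bound. Writing the equation in divergence form, $(r u')' \le -r e^{2u}$, I would first integrate from $0$ to $r$ to obtain $r u'(r) \le -\int_0^r \rho\, e^{2u(\rho)}\,d\rho < 0$, so that $u'(r) < 0$ for $r > 0$; in particular $u$ is decreasing. The natural geometric substitution is to pass to the arclength variable $R$, i.e. to regard the ``conformal radius'' $R(r)$ as the new independent variable, since $dR = e^{u}\,dr$ and the total conformal length $\int_0^\infty e^{u}\,dr$ is precisely $\lim_{r\to\infty} R(r)$.

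The key computation is to track the conformal perimeter $l(r) = 2\pi r\, e^{u(r)}$ as a function of $R$. Using $\frac{dl}{dR} = \frac{1}{e^u}\frac{dl}{dr}$ and the inequality $(r e^u)' = e^u(1 + r u') \le e^u\bigl(1 - \int_0^r \rho e^{2u}\,d\rho\bigr)$, one expects to derive a differential inequality of the form $\frac{d l}{dR} \le 2\pi \cos R$ or equivalently that $l(R)$ is dominated by the spherical profile $2\pi \sin R$. Concretely, I would aim to show that $v(R) := r(R)\, e^{u(r(R))}$ satisfies $v'(R) \le \cos R \cdot(\text{something})$ leading to the comparison $v(R) \le \sin R$ for $R \in [0,\pi]$; since $v \ge 0$ and $v(0)=0$ with $v$ forced back to $0$, the arclength $R$ cannot exceed $\pi$ without $v$ becoming negative, which is impossible. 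This is the mechanism by which the bound $\pi$ emerges: it is the first zero of $\sin R$ past the origin, mirroring the fact that for the genuine solution (the round sphere) $R$ ranges exactly over $[0,\pi]$.

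More precisely, I expect the cleanest route is to set $y(R) = r\,e^{u}$ and show that the supersolution inequality forces $y'' + y \le 0$ in the $R$ variable (the Jacobi/Sturm comparison form), with initial data $y(0) = 0$, $y'(0) = 1$. Sturm comparison against the equation $w'' + w = 0$, $w(R) = \sin R$, then yields $y(R) \le \sin R$ on the maximal interval where $y > 0$, and since $y(R) \ge 0$ throughout (as $r, e^u > 0$), the variable $R$ cannot pass $\pi$. Hence $\int_0^\infty e^{u}\,dr = \lim R(r) \le \pi$, which is \eqref{radlength}.

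The main obstacle I anticipate is twofold. First, one must handle the regularity and behavior at the origin: the substitution $R(r)$ is a valid change of variables only as long as $e^u > 0$, which holds, but I must verify that $y(0) = 0$ and $y'(0) = 1$ genuinely hold (this uses $u$ being smooth at $0$ with $u'(0)=0$, so $r e^u \to 0$ and $\frac{d}{dR}(r e^u) \to 1$). Second, and more delicately, the inequality $y'' + y \le 0$ must be extracted carefully from the integrated form of \eqref{2dradialcase}, because $u'$ need not be monotone and the term $\int_0^r \rho e^{2u}$ must be related back cleanly to $y$ and its derivatives; keeping track of whether the inequalities point in the consistent direction throughout the change of variables is where sign errors are most likely, and that is the step I would check most carefully.
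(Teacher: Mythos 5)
Your proposal is correct, but it takes a genuinely different route from the paper. The paper's proof is geometric: it shows that the radial rays from the origin are geodesics of $g=e^{2u}\delta$, establishes (via a Hopf--Rinow-type argument handling the incompleteness) that the segment from $0$ to any $x$ is a minimizing geodesic, and then invokes the proof of Myers' theorem to bound its length by $\pi$. Your argument replaces all of this machinery by a direct ODE computation, and the key step you were worried about does close cleanly: with $R(r)=\int_0^r e^{u}\,d\rho$ and $y=re^{u}$ one gets $\frac{dy}{dR}=1+ru'$ and
\begin{align*}
\frac{d^2y}{dR^2}+y=\frac{u'+ru''}{e^{u}}+re^{u}=\frac{r}{e^{u}}\Bigl(u''+\frac{u'}{r}+e^{2u}\Bigr)\le 0,
\end{align*}
which is exactly $re^{-u}$ times \eqref{2dradialcase}, so no sign issues arise and the integrated form is not even needed. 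With $y(0)=0$, $y'(0)=1$ (valid since $ru'(r)\to 0$ for a smooth radial $u$), Sturm comparison with $\sin R$ gives $y\le\sin R$ on $(0,\pi)$, and since $y=re^{u}>0$ for $r>0$ the arclength cannot reach $\pi$ at any finite $r$, proving \eqref{radlength}. What each approach buys: the paper's argument is conceptual and generalizes to the statements about minimizing geodesics used elsewhere in Section 5, but it must confront the incompleteness of $(\mathbb{R}^2,g)$; your computation is self-contained, avoids completeness entirely, and in fact answers (for this particular inequality) the question raised in Section 8 and in the sentence preceding the paper's proof, where the authors state they could not find a purely analytic argument. Of course, your Sturm comparison is precisely the analytic engine inside Myers' theorem, made explicit here because the geodesics are the radial rays with arclength $R$; the two proofs are thus morally the same, but yours eliminates the geometric overhead. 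The only point to make rigorous in a final write-up is the regularity at the origin needed for $y'(0)=1$, which both proofs implicitly assume.
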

Unfortunately so far we cannot give a analytic proof of this result, so we use geometric argument to prove this proposition.
\begin{proof}
At any $x \in \mathbb{R}^2$, we consider $g_x=e^{2u(|x|)}\delta_x$,where $u$ is a solution to \eqref{2dradialcase}. This gives a metric $g$ in $\mathbb{R}^2$ with $Ric_g \ge g$. 

First note that all the rays starting from the origin are geodesics. This is because, given any curve $\gamma$, the geodesic curvature of $\gamma$ is given by $\kappa_g=e^{-u}(\kappa+\partial u/\partial \nu)$, where $\kappa$ is the curvature of $\gamma$ in the Euclidean metric, and $\nu$ is the unit normal to $\gamma$. So if $\gamma$ is a ray, then since $u$ is radial, $\partial u/\partial \nu$ is zero, and of course $\kappa$ is also zero, so $k_g$ is zero. Hence rays starting from origin are geodesics.

By local uniqueness of geodesics, all geodesics  starting from the origin must be the rays. 

Next, we claim that for any $x \in \mathbb{R}^2$, there is a minimizing geodesic connecting $0$ and $x$. Indeed, by Remark \ref{keyobservation}, there is a minimizing geodesic connecting $0$ and $\partial B_{|x|}$. Since $u(|\cdot|)$ is a radial function, by symmetry there is a minimizing geodesic connecting $0$ and $x$. 

Since all the geodesics from $0$ are rays and no two rays intersect at other points except $0$, such minimizing geodesic must be part of the rays starting from $0$ and passing through $x$. Therefore, by the proof of Myer's Theorem, $\int_0^{|x|}e^{u(r)}dr \le \pi$. Sending $|x| \rightarrow \infty$, we have \eqref{radlength}.
\end{proof}

\hfill\\

Recall that $l(r):=\int_{\partial B_r} e^{u} ds$, which is the conformal length of $\partial B_r$, and that $A(r):=\int_{B_r} e^{2u} dx$, which is the conformal area of $B_r$. Now we prove Theorem \ref{maintheoremonradialcase}.

\begin{proof}[Proof of Theorem \ref{maintheoremonradialcase}]
We first prove the second inequality in \eqref{leibi}:
Since $u$ is radial, we may write $u(x)=u(r)$, where $r=|x|$. By \eqref{ricci1} and integration by part, we have
\begin{align}
\label{yanxing1}
    A \le -u' 2\pi r.
\end{align}
Since $A'=2\pi re^{2u}$, by \eqref{yanxing1} we have
\begin{align*}
    2A A'\le -2u'e^{2u}(2\pi r)^2 =&-(e^{2u})'4\pi^2 r^2\\
    =& -(e^{2u}4\pi^2 r^2)'+e^{2u}8\pi^2 r\\
    =& -(e^{2u}4\pi^2r^2)'+4\pi A'.
\end{align*}
Hence \begin{align*}
    \left(4\pi A -A^2\right)'\ge \left(e^{2u}4\pi^2 r^2\right)'.
\end{align*}Integrating the above from $0$ to $r$, we have
\begin{align}
\label{yanxing2}
    4\pi A(r) -A^2(r) \ge \left(2\pi r e^{u(r)}\right)^2=l^2(r).
\end{align}This proves the second inequality of \eqref{leibi}.
\hfill\\

Next, we prove \eqref{volumeradial}. This is actually from the above inquality, since it immediately implies that $A(r) \le 4\pi$ for any $r>0$. Let $r\rightarrow \infty$, we conclude that $vol_g(\mathbb{R}^2) =\int_{\mathbb{R}^2} e^{2u} \le 4\pi$.
\hfill\\

Next, we prove the results related to properties of $l(r)$:
First, \eqref{yanxing3} also follows from \eqref{yanxing2}, since $4\pi A-A^2 \le \left(\frac{4\pi-A+A}{2}\right)^2 =4\pi^2$. Hence $l(r) \le 2\pi$ for any $r>0$.

Note that $l'(r)=2\pi e^{u(r)}(1+ru'(r))$ and that $(1+ru'(r))'\le -re^{2u}<0$. If $1+ru'(r)\ge 0$ for all $r>0$, then $l'(r) \ge 0$ and hence $l(r)$ is increasing. This cannot happen since by the proof of Theorem \ref{weakbounddiamter} we can choose a sequence $r_k$ such that $l(r_k) \rightarrow 0$. Hence there exists $r_0>0$ such that when $r\le r_0$, $1+ru'(r) \ge 0$ and when $r>r_0$, $1+ru'(r)\le 0$. Hence $l(r)$ is increasing when $r<r_0$, reaches its maximum at $r_0$ and then decreasing when $r>r_0$. 

Since $l(r_k) \rightarrow 0$ as $k \rightarrow \infty$ and $l(r)$ is decreasing when $r>r_0$, \eqref{jiandanyouyong} is also proved.
\hfill\\

Next, we prove \eqref{boundsforr0} by applying the Heintze-Karcher inequality, which is originally obtained in \cite{HK78}, see also \cite{GLH}[Theorem 4.21] : For any $0<r \le r_0$, we have
\begin{align}
\label{zhuanzhu1}
    l(r) \le \int_{\partial B_{r_0}}\Big(\cos (R(r_0)-R(r))-\eta(r_0) \sin (R(r_0)-R(r)) \Big)e^{u(r_0)} ds,
\end{align}where the function $\eta(\rho)$ is the mean curvature of $\partial B_{\rho}$ under the metric $g=e^{2u}\delta$. Note that even though the statement of Heintze-Karhcer inequality is for domains in complete manifolds, the proof only requires that any point inside the domain can be connected to the boundary along exponential map. This is true in our case since as shown in the proof Proposition \ref{aiya}, any line segment belonging to the ray starting from the origin must be a minimizing geodesic. 

From \eqref{zhuanzhu1} we have that for any $0<r\le r_0$,
\begin{align}
    \cos (R(r_0)-R(r))-\eta(r_0) \sin (R(r_0)-R(r)) \ge 0.
\end{align}
Since $\eta=e^{-u}(u'+\frac{1}{r})$ and $l'(r_0)=0$, $\eta(r_0)=0$. Hence $\cos (R(r_0)-R(r)) \ge 0$ for any $0<r \le r_0$. Therefore, $R(r_0) \le \frac{\pi}{2}$. This is \eqref{boundsforr0}. 
\hfill\\

Now let us prove the first inequality of \eqref{leibi}, which is also a consequence of Heintze-Karcher inequality. We prove as follows. For any $r>0$, applying Heintze-Karcher inequality on $(B_r, e^{2u}\delta)$ and integrating from $0$ to $r$, we have
\begin{align}
\label{zhujidan}
    A(r) \le l(r) \int_0^R (\cos t-\eta(r) \sin t)dt,
\end{align}where $R=R(r)=\int_0^r e^{u(\rho)}d\rho$. Hence $\eta(r) \le \cot r$ for $r \le R$. Let $\eta(r)=\cot \xi$, where $\xi\in [0,\pi]$, and thus $\xi \ge R$ and when $0\le t \le \xi$, $\cos t-\eta(r) \sin t \ge 0$. Hence from \eqref{zhujidan} we have
\begin{align}
\label{zhujidan'}
    A(r) \le l(r) \int_0^{\xi} (\cos t-\eta(r) \sin t)dt.
\end{align}
Since all the radial rays starting $\partial B_r$ exhausts $\mathbb{R}^2 \setminus B_r$, we can also apply Heintze-Karcher inequality on $\mathbb{R}^2 \setminus B_r$ to get
\begin{align}
\label{zhujidan1}
    A_{\infty}-A(r) \le l(r) \int_0^{R_{\infty}-R}(\cos t+\eta \sin t)dt,
\end{align}where $R_{\infty}=\int_0^{\infty} e^{u(\rho)}d\rho$. By Proposition \ref{aiya}, $R_{\infty} \le \pi$.
Suppose that $t \in [0,\pi]$, then $\cos t+\eta(r) \sin t =\sin t (\cot t+\cot \xi) \ge 0$ if and only if $0 \le t \le \pi-\xi$. Note also that from \eqref{zhujidan1} we have that 
\begin{align}
    \label{faxian}
\cos t-\eta(r) \sin t \ge 0, \quad \forall 0\le t \le R_{\infty}-R.    
\end{align}
Then since $\eta(r)=\cot \xi$ and the integrand is nonnegative, we have 
\begin{align}
    \label{Rinftyestimate}
R_{\infty}-R \le \pi-\xi. 
\end{align}Therefore, we have
\begin{align}
\label{zhujidan1'}
    A_{\infty}-A(r) \le l(r) \int_0^{\pi-\xi}(\cos t+\eta \sin t)dt.
\end{align}
Multiplying \eqref{zhujidan'} and \eqref{zhujidan1'}, we have
\begin{align*}
    A(A_{\infty}-A) \le & l^2 \left( \int_0^{\xi} (\cos t-\eta \sin t)dt\right) \left(\int_0^{\pi-\xi} (\cos t+\eta \sin t) dt \right)\\
    =& l^2 \left(\sin \xi +\cot \xi (\cos \xi -1)\right)\left(\sin \xi +\cot \xi (\cos \xi +1)\right)\\
    =& l^2 \frac{1-\cos \xi}{\sin \xi } \frac{1+\cos \xi}{\sin \xi}=l^2.
\end{align*}This proves the first inequality of \eqref{leibi}. 
\hfill\\

Next, we prove \eqref{hmm}. It is actually a consequence of Bishop-Gromov inequality: Let $B_R'$ denote the geodesic ball of radius $R$ centered at the origin, where $R=R(r)=\int_0^r e^{u(\rho)}dr$, then 
\begin{align*}
    f(r):=\frac{vol_g(B_R')}{vol_{S^2}(B_R')}=\frac{\int_{B_r}e^{2u}dx}{2\pi (1-\cos R(r))}
\end{align*}is a non-increasing function. Note that even if $(\mathbb{R}^n, e^{2u}\delta)$ is not complete, we can still apply this inequality because the proof only requires that for any point on $\partial B_R$, there is a minimizing geodesic connecting the origin and the point. This is true since $u$ is radial. We have also used the fact that the line segment starting from the origin to any point on $\partial B_r$ is a minimizing geodesic, as shown in the proof of proposition \ref{aiya}.

Now that $f(r)$ is non-increasing, $f'(r)\le 0$. By directly computing $f'(r)$, we have
\begin{align}
\label{jinzhipaixu}
    2\pi re^{2u(r)}2\pi(1-\cos R) -\left(\int_{B_r}e^{2u}dx\right)2\pi (\sin R) e^{u(r)} \le 0
\end{align}
Since $A(r)=\int_{B_r}e^{2u}dx$ and $l(r)=2\pi r e^{u(r)}$, \eqref{jinzhipaixu} therefore implies \eqref{hmm}.
\hfill\\

Next, we prove \eqref{jiefaqian} and \eqref{jiefa}. Recall that by the Heintze-Karcher inequality, we have
\begin{align}
\label{radialHeintzKarcher}
    A(r) \le \int_0^R \int_{\partial B_r} (\cos t-\eta \sin t)ds dt,
\end{align}where $R=R(r)=\int_0^r e^{u(\rho)}d\rho$ and $\eta$ is the geodesic curvature of $\partial B_r$, which is equal to $e^{-u}(\frac{1}{r}+u'(r))$. Hence by \eqref{radialHeintzKarcher} we have \begin{align}
    A \le & l \sin R-\eta l (1-\cos R)\\
    =&l \sin R -e^{-u}(1/r+u'(r))2\pi r e^{u(r)}(1-\cos R)\\
    =\label{withoutusingbishop} & l\sin R-e^{-u} l' (1-\cos R).
\end{align}
Since $l' \ge 0$ when $r \le r_0$, hence $A \le l \sin R$, and this proves \eqref{jiefaqian}. \eqref{jiefa} is a direct consequence of \eqref{jiefaqian}.
\hfill\\

Next, we prove \eqref{busanhuang} and \eqref{bushanhuangtuichu}. By \eqref{hmm} and the first inequality of \eqref{leibi}, we have
\begin{align*}
    \left(\frac{1-\cos R}{\sin R}\right)^2 \le \frac{A^2}{l^2} \le \frac{A^2}{A(A_{\infty}-A)}.
\end{align*}Hence 
\begin{align*}
    \frac{1-\cos R}{1+\cos R} \le \frac{A}{A_{\infty}-A}.
\end{align*}After simplification, we obtain \eqref{busanhuang}, and \eqref{bushanhuangtuichu} is just a direct consequence of \eqref{busanhuang}.
\hfill\\

It remains to show \eqref{diamradial}. This can be proved by using argument similar to that used in the proof of Proposition \ref{diametercontrolinfty}:

For any $x, y \in \mathbb{R}^n$, We let $\gamma_1, \, \gamma_2$ be the rays staring from $0$ and passing through $x$ and $y$ respectively. Let $\gamma_x$ be the line segment connecting $0$ and $x$, and let $\gamma^x=\gamma_1 \setminus \gamma_x$. Similarly, let $\gamma_y$ be the line segment connecting $0$ and $y$, and let $\gamma^y=\gamma_2 \setminus \gamma_y$. We proceed similarly as the proof of Theorem \ref{maintheoremonradialcase}:

If $l(\gamma_x)+l(\gamma_y) \le \pi$, then this already implies $d_g(x,y) \le \pi$ by triangular inequality.

If $l(\gamma_x)+l(\gamma_y) \ge \pi$, then since $l(\gamma_i) \le \pi, \, i=1,2$ as we proved in Proposition \ref{aiya}, we have
\begin{align}
\label{moshenyihao3'}
    l(\gamma^x)+l(\gamma^y) \le \pi.
\end{align}
Also, by \eqref{jiandanyouyong} which we already proved, we know that for any $\epsilon>0$, we can choose $x_R=\gamma^x \cap \partial B_R$ and $y_R=\gamma^y \cap \partial B_R$ such that $d_g(x_R, y_R)\le \int_{\partial B_R} e^u ds<\epsilon$. Therefore, by \eqref{moshenyihao3'},
\begin{align*}
    d_g(x,y)\le d_g(x,x_R)+d_g(x_R,y_R)+d_g(y_R,y) \le \pi+\epsilon.
\end{align*}
Let $\epsilon \rightarrow 0$, we have $d_g(x,y)\le \pi$. 
\end{proof}
\begin{remark}
\label{bushang} 
From the proof, one can see that if either of the inequalities in \eqref{leibi} becomes equalities for some value $r>0$, then $u$ must be a solution to \eqref{scalliouville} and then $(\mathbb{R}^2, e^{2u}\delta)$ is a sphere minus a point.
\end{remark}

\subsection*{Some alternative proofs of some of the inequalities in Theorem \ref{maintheoremonradialcase}}
\begin{itemize}
\item  \eqref{volumeradial} is proved as a consequence of the second inequality of \eqref{leibi}. Actually the first inequality of \eqref{leibi} also implies \eqref{volumeradial}, since 
    \begin{align*}
        A_{\infty}-A\le \frac{l^2}{A}.
    \end{align*}Let $r\rightarrow 0$ on both sides of the above inequality, we have $A_{\infty} \le 4\pi$, which is exactly \eqref{volumeradial}.
    
\item \eqref{jiefa} can also be proved by applying Alexandrov inequality. 
    \begin{proof}[Second proof of \eqref{jiefa}]
To show that $l(r) \ge A(r)$ for $r \le r_0$, we first exploit the following Alexandrov inequality (see \cite{Top99} for a proof): For any $K_0 \in \mathbb{R}$,
\begin{align}
\label{alexandrov'}
    4\pi A \le L^2 +K_0A^2+2A\int_{\Omega}(K-K_0)_+ dvol_g.
\end{align}Let $K_0=1$, and we apply \eqref{alexandrov'} to $(B_r, g)$. Note that \begin{align*}
    (K-1)_+=(-(\Delta u) e^{-2u}-1)_+=\left(e^{-2u}(-\Delta u-e^{2u})\right)_+=e^{-2u}(-\Delta u-e^{2u}),
\end{align*}hence we have
\begin{align*}
    4\pi A-A^2 \le & l^2+2A\int_{B_r} (-\Delta u-e^{2u}) dx\\
    =&l^2+2A\left(2\pi\int_0^r(-\rho u'(\rho))'d\rho-A\right)\\
    =&l^2+2A\left(-2\pi ru'(r)-A\right).
\end{align*}Hence
\begin{align*}
    l^2 \ge& 4\pi A(1+ru'(r)) +A^2\\
    \ge& A^2, \quad \mbox{when $r\le r_0$}
\end{align*}since $ru'(r) \ge -1$ when $r \le r_0$.

\end{proof}

\item \begin{proof}[Alternative proof of \eqref{yanxing3}:]
If $l(r)$ achieves its maximum at $r_0$, then $l'(r_0)=0$, and thus $1+r_0e^{u(r_0)} =0$. Hence $\partial B_{r_0}$ is a geodesic under the metric $g=e^{2u}$. Applying Toponogov's Theorem, we have $l(r_0) \le l_{S^2}(\partial B_{R(r_0)}')$, where $R(r)$ is the function defined above. Therefore,
\begin{align*}
    l(r_0) \le 2\pi \sin R(r_0) \le 2\pi.
\end{align*}
\end{proof}

\end{itemize}

In the end of this section, we remark that if $u$ satisfies \eqref{ricci1} and $(\mathbb{R}^2, e^{2u}\delta)$ can be completed as a closed Riemannian surface, then the first inequality of \eqref{leibi} is exactly the L\'evy-Gromov isoperimetric inequality, while the second inequality is equivalent to that of \cite[Corollary 3.2]{NiWang}, provided one can show that any minimizer to the functional
\begin{align*}
    h_{\beta}(\Omega):=\Big\{\frac{\int_{\partial \Omega}e^{u} ds}{\int_{\mathbb{R}^2}e^{2u}dx}:\, \Omega \subset \mathbb{R}^2,\quad  \frac{\int_{\Omega} e^{2u}dx}{\int_{\mathbb{R}^2}e^{2u}dx}=\beta\Big\}
\end{align*}must be a ball centered at the origin.

\section{Higher Dimensional Results for radial solutions to generalized Liouville equation}
There are a few directions to extend the study of solutions or supersolutions to Liouville equation \eqref{scalliouville} in $\mathbb{R}^2$ to higher dimensional spaces. For example, \eqref{scalliouville} can be viewed as the equation describing globally conformally flat Einstein manifolds with $Ric_g=(n-1)g$ where $n=2$ is the dimension of the manifold. In higher dimensional case, it is natural to consider corresponding solutions to equation $Ric_g=(n-1)g$, $g=e^{2u}\delta$ where $\delta$ is the classical metric in $\mathbb{R}^n$.

By \cite[Page 58]{Besse}, we have the formula for Ricci tensor
\begin{align}
\label{ricciconformalchangeformula}
    Ric_g=(2-n)(\nabla d u-du \otimes du)+(-\Delta u-(n-2)|\nabla u|^2) \delta.
\end{align}Therefore, the higher dimensional Liouville equation reads
\begin{align}
\label{higherdimensionalLiouville}
    (2-n)(\nabla d u-du \otimes du)+(-\Delta u-(n-2)|\nabla u|^2) \delta=(n-1)e^{2u}\delta.
\end{align}

Let $u$ be a radial solution to $Ric_g \ge (n-1)g$. We first recall the equation when $u$ is radially symmetric.

Let $h$ be the unit round metric on $S^{n-1}$, then 
\begin{align*}
    \delta=dr^2+r^2h
\end{align*}and thus
\begin{align}
\label{change}
    \delta_{ij}-\frac{x_ix_j}{r^2}=r^2h_{ij}
\end{align}where $h_{ij}=h(\partial_i,\partial_j)$.
Hence if $u$ is radially symmetric, then we have
\begin{align*}
    u_iu_j=u_r^2\frac{x_ix_j}{r^2}=u_r^2dr^2(\partial_i,\partial_j)
\end{align*}and 
\begin{align*}
    u_{ij}=&u_{rr}\frac{x_j}{r}\frac{x_i}{r}+\frac{u_r}{r}\left(\delta_{ij}-\frac{x_ix_j}{r^2}\right)\\
    =&[u_{rr}dr^2+ru_rh](\partial_i,\partial_j), \quad \mbox{by \eqref{change}}.
\end{align*}Since $du=\sum_iu_i dx_i$ and $\nabla du=\sum_{i,j}u_{ij}dx_i \otimes dx_j$, we have \begin{align}
    \label{hessiansphericalcoordinates}
\nabla du=u_{rr}dr^2+ru_rh,    
\end{align}and 
\begin{align}
    du\otimes du=u_r^2dr^2,
\end{align}
and thus \eqref{ricciconformalchangeformula} reads
\begin{align*}
    Ric_g=-(n-1)(u^{''}+\frac{u'}{r})dr^2-\left(u^{''}+(2n-3)\frac{u'}{r}+(n-2)(u')^2\right)r^2h.
\end{align*}Hence $Ric_g \ge (n-1)g$ is equivalent to
\begin{align}
    \label{higherdimensionalradiallycase}
\begin{cases}
u^{''}+\frac{u'}{r}+e^{2u}\le 0\\
u^{''}+(2n-3)\frac{u'}{r}+(n-2)(u')^2+(n-1)e^{2u}\le 0.
\end{cases}    
\end{align}

Motivated by Theorem \ref{maintheoremonradialcase}, it is natural to ask: If $(M,g)=(\mathbb{R}^n, e^{2u}\delta)$ where $u$ is a radial function satisfying \eqref{higherdimensionalradiallycase}, then is it true that $diam_g(\mathbb{R}^2) \le \pi$ and $vol_g(\mathbb{R}^n) \le vol_{g_{S^n}}(S^n)$?

The answer is yes. Actually, even assuming the weaker assumption \eqref{2dradialcase}, we still have the same diameter upper bound $\pi$. The proof of this is almost identical to the proof of \eqref{diamradial}.
\begin{proposition}
\label{zaoshuizaoqi}
If $u$ satisfies \eqref{2dradialcase}, and let $g=e^{2u(|\cdot|)}\delta$ where $\delta$ is the classical metric in $\mathbb{R}^n$, then $diam_g(\mathbb{R}^n) \le \pi$.
\end{proposition}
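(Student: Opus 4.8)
The plan is to reproduce the proof of \eqref{diamradial} essentially verbatim, the only genuinely dimension-dependent change being the way two far-apart points on a common Euclidean sphere $\partial B_R$ are joined. Everything hinges on two facts about the scalar function $u(r)$, both insensitive to the ambient dimension $n$: that each radial ray from the origin has finite $g$-length bounded by $\pi$, and that $r\,e^{u(r)}\to 0$ as $r\to\infty$. For the first fact I would simply invoke Proposition \ref{aiya}: its hypothesis is exactly the one-variable inequality \eqref{2dradialcase}, and its conclusion $\int_0^\infty e^{u(\rho)}\,d\rho\le\pi$ makes no reference to the dimension, so it transfers unchanged to $\mathbb{R}^n$. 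Setting $R_\infty=\int_0^\infty e^{u(\rho)}\,d\rho\le\pi$, this says the whole ray from the origin in any fixed direction has $g$-length at most $\pi$; these rays will serve merely as admissible test curves, so I do not need them to be minimizing for the distance estimates below.

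For the second fact I would argue directly from the ODE. Differentiating, $(1+ru')'=u'+ru''\le -r\,e^{2u}<0$ by \eqref{2dradialcase}, so $1+ru'$ is strictly decreasing; since $\phi(r):=r\,e^{u(r)}$ satisfies $\phi'=e^{u}(1+ru')$, the function $\phi$ is eventually monotone. If $1+ru'\ge 0$ for all $r$, then $\phi$ is nondecreasing, hence bounded below by a positive constant, forcing $\int_1^\infty e^{u}\,d\rho=\infty$ and contradicting the first fact; therefore $\phi$ is eventually decreasing to a limit $L\ge 0$, and $L>0$ would likewise give $\int_1^\infty e^{u}\,d\rho=\infty$. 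Hence $L=0$, i.e. $r\,e^{u(r)}\to 0$. This is the $\mathbb{R}^n$ counterpart of \eqref{jiandanyouyong}, obtained here without any finiteness-of-volume hypothesis.

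Now I would run the proof of \eqref{diamradial}. Fix $x,y\in\mathbb{R}^n\setminus\{0\}$, let $\gamma_1,\gamma_2$ be the rays from $0$ through $x,y$, write $\gamma_x,\gamma_y$ for the inner segments from $0$ to $x,y$ and $\gamma^x,\gamma^y$ for the outer rays. If $l(\gamma_x)+l(\gamma_y)\le\pi$, then $d_g(x,y)\le d_g(x,0)+d_g(0,y)\le l(\gamma_x)+l(\gamma_y)\le\pi$. Otherwise $l(\gamma_x)+l(\gamma_y)\ge\pi$, and since $l(\gamma_1)=l(\gamma_2)=R_\infty\le\pi$ we get $l(\gamma^x)+l(\gamma^y)=2R_\infty-(l(\gamma_x)+l(\gamma_y))\le 2R_\infty-\pi\le\pi$. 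For large $R$ put $x_R=R\,x/|x|$ and $y_R=R\,y/|y|$ and join them along a great-circle arc of $\partial B_R$; since $u\equiv u(R)$ on $\partial B_R$, this arc has $g$-length $e^{u(R)}R\,\theta_0\le \pi R\,e^{u(R)}$, where $\theta_0\le\pi$ is the fixed Euclidean angle between $x$ and $y$. Therefore
\begin{align*}
d_g(x,y)\le \int_{|x|}^R e^{u}\,d\rho+\pi R\,e^{u(R)}+\int_{|y|}^R e^{u}\,d\rho,
\end{align*}
and letting $R\to\infty$, using $r\,e^{u(r)}\to 0$, gives $d_g(x,y)\le l(\gamma^x)+l(\gamma^y)\le\pi$. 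Taking the supremum over $x,y$ yields $diam_g(\mathbb{R}^n)\le\pi$.

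The step I expect to need the most care is the passage from $\mathbb{R}^2$ to $\mathbb{R}^n$ in the two scalar facts. For $n>2$ the hypothesis \eqref{2dradialcase} is strictly weaker than $Ric_g\ge(n-1)g$ (it controls only the radial component of the Ricci curvature, cf. \eqref{higherdimensionalradiallycase}), so one cannot simply appeal to Myers' theorem or to Theorem \ref{maintheoremonradialcase}. The resolution is precisely that both facts are statements about the one-dimensional inequality \eqref{2dradialcase}: the radial length bound because a radial geodesic feels only the radial Ricci curvature, and $r\,e^{u(r)}\to 0$ by the elementary monotonicity argument above, while the cost of reconnecting on $\partial B_R$ is incurred along a one-dimensional great-circle arc. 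Thus the extra $n-2$ dimensions never enter the estimate, which is why the weaker radial hypothesis still suffices.
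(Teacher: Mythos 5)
Your proof is correct and follows essentially the same route as the paper's: split into the two cases $l(\gamma_x)+l(\gamma_y)\le\pi$ and $\ge\pi$, use Proposition \ref{aiya} to bound the outer ray lengths, and reconnect the two outer rays on a large Euclidean sphere $\partial B_R$ whose conformal size tends to zero. The only difference is that you derive $r\,e^{u(r)}\to 0$ directly from the scalar ODE via the monotonicity of $1+ru'$, whereas the paper cites \eqref{jiandanyouyong} from Theorem \ref{maintheoremonradialcase}; your self-contained argument is a harmless (arguably cleaner) substitute, since that cited statement is formally a two-dimensional one.
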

\begin{proof}
Let $x$ and $y$ be two points in $\mathbb{R}^n$, and let $\gamma_1, \, \gamma_2$ be the rays staring from $0$ and passing through $x$ and $y$ respectively. Let $\gamma_x$ be the line segment connecting $0$ and $x$, and let $\gamma^x=\gamma \setminus \gamma_x$. Similarly, let $\gamma_y$ be the line segment connecting $0$ and $y$, and let $\gamma^y=\gamma \setminus \gamma_y$. 

If $l(\gamma_x)+l(\gamma_y) \le \pi$, then this already implies $d_g(x,y) \le \pi$ by triangular inequality.

If $l(\gamma_x)+l(\gamma_y) \ge \pi$, then by Proposition \ref{aiya},
\begin{align}
\label{moshenyihao3}
    l(\gamma^x)+l(\gamma^y) \le \pi.
\end{align}
Since $\lim_{r \rightarrow \infty} 2\pi r e^{u(r)}=0$ as proved in Theorem \ref{maintheoremonradialcase}, we know that for any $\epsilon>0$, we can choose $x_R=\gamma^x \cap \partial B_R$ and $y_R=\gamma^y \cap \partial B_R$ such that $d_g(x_R, y_R)\le 2\pi R e^{u(R)}<\epsilon$. Therefore, by \eqref{moshenyihao3},
\begin{align*}
    d_g(x,y)\le d_g(x,x_R)+d_g(x_R,y_R)+d_g(y_R,y) \le \pi+\epsilon.
\end{align*}
Let $\epsilon \rightarrow 0$, we have $d_g(x,y)\le \pi$. 
\end{proof}

Now let us state the higher dimensional result for radially symmetric solutions.
\begin{proposition}
Let $u$ be the function satisfying \eqref{higherdimensionalradiallycase}, then under the metric $g=e^{2u(|\cdot|)}\delta$, where $\delta$ is the Euclidean metric in $\mathbb{R}^n$, then 
\begin{align}
\label{ndimdiameter}
    diam_g(\mathbb{R}^n) \le \pi
\end{align}
and \begin{align}
\label{ndimvolume}
    vol_g(\mathbb{R}^n) \le vol(S^n).
\end{align}
\end{proposition}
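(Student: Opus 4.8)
The plan is to treat the two conclusions separately, and to observe that both follow already from the \emph{first} inequality in \eqref{higherdimensionalradiallycase}, i.e.\ from \eqref{2dradialcase}. The diameter bound \eqref{ndimdiameter} requires no new work: since $u$ satisfies \eqref{2dradialcase}, Proposition \ref{zaoshuizaoqi} applies verbatim and gives $diam_g(\mathbb{R}^n)\le\pi$.

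For the volume bound \eqref{ndimvolume} I would pass to geodesic polar coordinates about the origin. Writing $g=e^{2u}(dr^2+r^2h)$ with $h$ the round metric on $S^{n-1}$, the geodesic distance from the origin along a ray is $R=R(r)=\int_0^r e^{u(\rho)}\,d\rho$, and (as established in the proof of Proposition \ref{aiya}) these rays are minimizing geodesics, so $R$ is a legitimate radial coordinate with $dR=e^u\,dr$. The volume element becomes
\begin{align}
\label{planvol}
dV_g=e^{nu}r^{n-1}\,dr\,d\sigma=(re^u)^{n-1}\,dR\,d\sigma,
\end{align}
where $d\sigma$ is the measure on $S^{n-1}$. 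Thus, with $\omega_{n-1}=vol(S^{n-1})$ and $J(R):=re^{u}$, we have $vol_g(\mathbb{R}^n)=\omega_{n-1}\int_0^{R_\infty}J(R)^{n-1}\,dR$, where $R_\infty=\int_0^\infty e^{u}\,d\rho\le\pi$ by Proposition \ref{aiya}. Since $vol(S^n)=\omega_{n-1}\int_0^\pi\sin^{n-1}R\,dR$, the whole statement reduces to the pointwise comparison $0\le J(R)\le\sin R$ on $[0,\pi]$.

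To obtain this comparison I would set up the scalar Jacobi inequality for $J$. A direct computation using $dr/dR=e^{-u}$ gives $J'(R)=1+ru'(r)$ and $J''(R)=e^{-u}(u'+ru'')$, where $'$ on $J$ denotes $d/dR$ while $u',u''$ are $r$-derivatives. Multiplying \eqref{2dradialcase} by $r$ yields $ru''+u'\le-re^{2u}$, hence
\begin{align}
\label{planjacobi}
J''(R)+J(R)\le 0,\qquad J(0)=0,\quad J'(0)=1,
\end{align}
the initial data coming from $u'(0)=0$ and the finiteness of $u$ near the origin. Comparing \eqref{planjacobi} with $(\sin R)''+\sin R=0$ under the identical initial data, the standard Sturm argument finishes the job: the Wronskian-type quantity $J'\sin R-J\cos R$ has derivative $(J''+J)\sin R\le 0$ on $(0,\pi)$ and tends to $0$ as $R\to0^+$, so $(J/\sin R)'\le 0$ there while $J/\sin R\to 1$ as $R\to0^+$; therefore $J(R)\le\sin R$ on $(0,\pi)$. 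Since $J\ge 0$ and $0\le R_\infty\le\pi$, raising to the power $n-1$ and integrating gives $vol_g(\mathbb{R}^n)\le\omega_{n-1}\int_0^\pi\sin^{n-1}R\,dR=vol(S^n)$, as desired.

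The main obstacle I expect is one of care rather than of idea: justifying the coordinate change \eqref{planvol} and the initial conditions in \eqref{planjacobi} at the singular center $R=0$, and ensuring the Sturm comparison is applied only on the range $[0,R_\infty]\subseteq[0,\pi]$ where $\sin R\ge 0$, so that $J^{n-1}\le\sin^{n-1}R$ is preserved under the $(n-1)$-st power. Alternatively, one could replace the explicit ODE comparison by the Bishop--Gromov inequality exactly as in the proof of \eqref{hmm} in Theorem \ref{maintheoremonradialcase}, since rays from the origin are minimizing geodesics; the inequality $J\le\sin R$ is precisely Bishop's Jacobian comparison in this radially symmetric setting, and only the first line of \eqref{higherdimensionalradiallycase} (the radial Ricci bound) enters.
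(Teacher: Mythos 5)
Your proof is correct, and for the diameter bound it coincides with the paper's: the paper likewise disposes of \eqref{ndimdiameter} in one line by citing Proposition \ref{zaoshuizaoqi}. For the volume bound \eqref{ndimvolume} the paper simply invokes the Bishop--Gromov theorem, noting (as in the proof of \eqref{hmm}) that it applies despite incompleteness because rays from the origin are minimizing geodesics; you instead prove the relevant special case by hand, deriving the scalar Jacobi inequality $J''+J\le 0$ for $J(R)=re^{u(r)}$ with $J(0)=0$, $J'(0)=1$, and concluding $J\le\sin R$ by Sturm comparison before integrating in geodesic polar coordinates. The two arguments are the same comparison theorem in different clothing --- your Riccati/Wronskian computation is exactly the proof of Bishop's Jacobian estimate specialized to a radial conformal factor --- but your version buys two things the paper's citation does not make visible: it is self-contained (no appeal to a comparison theorem whose standard statement assumes completeness), and it isolates the fact that only the first line of \eqref{higherdimensionalradiallycase}, i.e.\ the radial Ricci bound \eqref{2dradialcase}, is actually used for both conclusions. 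Your computations check out: $dV_g=(re^u)^{n-1}\,dR\,d\sigma$, $J'(R)=1+ru'(r)$, $J''(R)+J(R)=re^{-u}\bigl(u''+u'/r+e^{2u}\bigr)\le 0$, the initial data follow from smoothness of the radial function at the origin, and $R_\infty\le\pi$ from Proposition \ref{aiya} keeps the comparison inside $[0,\pi]$ where $\sin R\ge 0$, so the $(n-1)$-st power preserves the inequality.
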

\begin{proof}
\eqref{ndimdiameter} is proved in Proposition \ref{zaoshuizaoqi}.

\eqref{ndimvolume} follows from Bishop-Gromov Theorem. Indeed, as discussed in the proof of \eqref{hmm}, Bishop-Gromov Theorem can be applied in $(\mathbb{R}^n, g)$ for $g$ to be a radially symmetric conformally flat metric.
\end{proof}
At the end of the section, we remark that any solution to \eqref{higherdimensionalLiouville} must be radially symmetric about a point. This is because any solution to \eqref{higherdimensionalLiouville} is also a solution to the Yamabe equation, and hence by \cite{CL91}, such solution is radially symmetric about a point.

\section{Connection with sphere covering inequality for the case $K \ge 1$}
In this section, we prove Theorem \ref{genghm} and Theorem \ref{radialspherecovering}.

First, let us state an equivalent version of L\'evy-Gromov isoperimetric inequality in closed Riemannian manifold of dimension $2$, which gives a simple algebraic relation between length and area.
\begin{proposition}
\label{xulie}
Let $M$ be a closed Riemannian manifold of dimension 2 and the Guassian curvature on $M$ is bounded below by $1$, then for any smooth domain $\Omega$ in $M$, we have
\begin{align}
\label{consequenceofgromov}
    l^2(\partial \Omega) \ge vol(\Omega) vol(M \setminus \Omega).
\end{align}The equality holds if and only if $M$ is a unit sphere and $\Omega$ is a spherical cap (geodesic disk).
\end{proposition}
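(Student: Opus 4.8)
The plan is to recognize \eqref{consequenceofgromov} as nothing more than the L\'evy-Gromov isoperimetric inequality with the geodesic radius of the comparison cap eliminated algebraically. Throughout write $V=vol(M)$, $A=vol(\Omega)$ and $L=l(\partial\Omega)$, and set $\beta=A/V\in[0,1]$. Since $M$ is a closed surface with $K\ge 1$ (equivalently $Ric_g\ge g$), by Myer's Theorem $M$ has diameter at most $\pi$, so the hypotheses needed to run the L\'evy-Gromov isoperimetric inequality (see \cite{Gromov}, \cite{Bayle}) are met. That inequality compares the normalized perimeter of $\Omega$ with that of a geodesic cap on the unit sphere $S^2$ enclosing the same volume fraction: if $B\subset S^2$ is a spherical cap with $vol(B)/4\pi=\beta$, then
\begin{align*}
\frac{L}{V}\ge \frac{l_{S^2}(\partial B)}{4\pi}.
\end{align*}

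First I would record the two elementary formulas for a spherical cap of geodesic radius $R\in[0,\pi]$ on the unit sphere, namely area $a(R)=2\pi(1-\cos R)$ and perimeter $p(R)=2\pi\sin R$, and observe the clean algebraic identity
\begin{align*}
p(R)^2=4\pi^2\sin^2 R=\bigl(2\pi(1-\cos R)\bigr)\bigl(2\pi(1+\cos R)\bigr)=a(R)\bigl(4\pi-a(R)\bigr),
\end{align*}
which holds because $4\pi-a(R)=2\pi(1+\cos R)$. Applying this to the comparison cap $B$, for which $a=4\pi\beta$, gives $l_{S^2}(\partial B)=\sqrt{4\pi\beta(4\pi-4\pi\beta)}=4\pi\sqrt{\beta(1-\beta)}$. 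Substituting into the L\'evy-Gromov inequality and using $\beta=A/V$ and $1-\beta=(V-A)/V=vol(M\setminus\Omega)/V$, I obtain
\begin{align*}
L\ge V\sqrt{\beta(1-\beta)}=\sqrt{A(V-A)}=\sqrt{vol(\Omega)\,vol(M\setminus\Omega)},
\end{align*}
and squaring yields \eqref{consequenceofgromov}.

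For the equality statement, the forward direction follows since equality in \eqref{consequenceofgromov} forces $\Omega$ to realize the L\'evy-Gromov lower bound, i.e.\ $\Omega$ attains the spherical isoperimetric comparison; the rigidity half of L\'evy-Gromov (\cite{Gromov}) then forces $M$ to be isometric to the unit sphere $S^2$ and $\Omega$ to be a geodesic disk. The converse is immediate from the identity $p(R)^2=a(R)(4\pi-a(R))$ established above, which is exactly \eqref{consequenceofgromov} with equality once $M=S^2$ (so $V=4\pi$) and $\Omega=B$ is a cap.

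The main obstacle I anticipate is the equality analysis rather than the inequality: one must invoke the rigidity part of L\'evy-Gromov carefully, and in particular exclude the degenerate fractions $\beta=0$ and $\beta=1$ (where \eqref{consequenceofgromov} reads $L^2\ge 0$ and carries no geometric content), so that the rigidity theorem genuinely applies at an intermediate volume. A secondary point worth checking is that L\'evy-Gromov, usually stated for the isoperimetric profile (an infimum over competitors of fixed volume), transfers correctly to the fixed smooth domain $\Omega$: the inequality direction is automatic since the perimeter of $\Omega$ dominates the profile value, while in the equality case $\Omega$ must itself be a profile minimizer, which is precisely what licenses the rigidity conclusion.
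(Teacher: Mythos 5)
Your proposal is correct and follows essentially the same route as the paper: apply the L\'evy–Gromov isoperimetric inequality to compare $\Omega$ with a spherical cap of the same volume fraction, compute the cap perimeter as $4\pi\sqrt{\beta(1-\beta)}$, and substitute to obtain \eqref{consequenceofgromov}, with the equality case reduced to L\'evy–Gromov rigidity. Your added remarks on excluding the degenerate fractions $\beta=0,1$ and on passing from the isoperimetric profile to a fixed domain are sensible refinements the paper leaves implicit.
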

\begin{proof}
Let $\beta=\frac{vol(\Omega)}{vol(M)}$, then by L\'evy-Gromov isoperimetric inequality, we have
\begin{align}
\label{levygromov}
    \frac{l(\partial \Omega)}{vol(M)}\ge \frac{l(\partial B_r)}{vol(S^2)},
\end{align}
where $S^2$ is the unit 2-sphere and $B_r$ is the geodesic ball in $S^2$ such that $\frac{vol(B_r)}{vol(S^2)}=\beta$. Hence $vol(B_r)=4\pi \beta$ and thus $l(\partial B_r)=4\pi \sqrt{\beta-\beta^2}$. Therefore, \eqref{levygromov} becomes
\begin{align*}
    \frac{l^2(\partial \Omega)}{vol^2(M)}\ge \beta(1-\beta)=\frac{vol(\Omega)}{vol(M)}\left(1-\frac{vol(\Omega)}{vol(M)}\right).
\end{align*}This implies \eqref{consequenceofgromov}. Since from the original proof of \eqref{levygromov}, it becomes equality for some $r>0$ if and only if $M$ is a sphere, we prove the equality case of \eqref{consequenceofgromov}.
\end{proof}

Now we prove Theorem \ref{genghm}.
\begin{proof}[Proof of Theorem \ref{genghm}]
Let $\lambda(t)=h(t)e^{-2t}$, hence \eqref{mc1''} implies $\lambda'(t) \le 0$. 
Set $\alpha(t)=\int_{\{u>t\}}\lambda(u)e^{2u}d\mu$ and $\beta(t)=\mu\left(\{u>t\}\right)$. Hence 
\begin{align*}
    \beta'(t)=-\int_{\{u=t\}}\frac{1}{|\nabla u|} ds \quad \mbox{and $\alpha'(t)=\lambda(t)e^{2t}\beta'(t)$.}
\end{align*}
We integrate \eqref{rubin2''} over $\{u>t\}$, and by divergence theorem we have \begin{align*}
    \int_{\{u=t\}}|\nabla u|ds +\beta \le \alpha.
\end{align*}
Hence \begin{align*}
    -\beta'(\alpha-\beta) \ge \left(\int_{\{u=t\}}|\nabla u|ds\right)\left(\int_{\{u=t\}}\frac{1}{|\nabla u|}ds\right) \ge l^2(\partial \{u>t\}),
\end{align*}where $l$ is the length function on $M$.
We multiply the above by $\lambda(t) e^{2t}$, and using $\alpha'=\lambda e^{2t} \beta'$ and applying \eqref{consequenceofgromov}, we have 
\begin{align*}
    -\alpha \alpha' +\alpha' \beta \ge& \lambda e^{2t} (\mu(M) \beta-\beta^2)\\
    =&\lambda e^{2t}\mu(M) \beta-\frac{(\lambda e^{2t}\beta^2)'-\lambda'e^{2t}\beta^2-2\beta \alpha'}{2}.
\end{align*}Hence \begin{align}
\label{1cru}
    -2\alpha \alpha'\ge 2\mu(M) \lambda e^{2t}\beta -(\lambda e^{2t}\beta^2)'+\lambda'e^{2t} \beta^2.
\end{align}
Note that
\begin{align*}
    2\lambda e^{2t}\beta=(\lambda e^{2t}\beta)'-\lambda' e^{2t} \beta -\alpha',
\end{align*}where we have again used that $\alpha'=\lambda e^{2t}\beta'$.
Hence \eqref{1cru} becomes 
\begin{align}
    \label{2cru}
-2\alpha \alpha' \ge \left(\lambda e^{2t}\beta (\mu(M)-\beta)\right)'-\mu(M)\alpha'+\lambda'e^{2t}(\beta^2-\mu(M)\beta).    
\end{align}
Since $\lambda'\le 0$ and $\beta^2-\mu(M) \beta  \le 0$, we have 
\begin{align}
\label{jianhua}
    -2\alpha \alpha' \ge \left(\lambda e^{2t}\beta (\mu(M)-\beta)\right)'-\mu(M)\alpha'.
\end{align}
Then we integrate \eqref{jianhua}  from $t=0$ to $\infty$, and using the fact that $\lim_{t \rightarrow \infty}\alpha(t)=0$ and $\lim_{t \rightarrow \infty} \lambda e^{2t}\beta (\mu(M)-\beta) \le \mu(M) \lim_{t\rightarrow \infty}\alpha(t)=0$, we have 
\begin{align*}
    \mu(M) \alpha(0) -\alpha(0)^2 \le \lambda(0)\beta(0)\left(\mu(M)-\beta(0)\right).
\end{align*}
That is,
\begin{align*}
    \mu(M)\int_{\Omega}h(u) d\mu-\left(\int_{\Omega} h(u) d\mu\right)^2 \le h(0)\left( \mu(M) \mu(\Omega)-\mu^2(\Omega)\right).
\end{align*} 
In particular, if $h(0)\le 1$, then\begin{align*}
    \mu(M) \int_{\Omega}h(u) d\mu-\left(\int_{\Omega} h(u) d\mu\right)^2 \le \mu(M) h(0) \mu(\Omega)-h^2(0)\mu^2(\Omega).
\end{align*}Hence
\begin{align*}
    \mu(M) \left(\int_{\Omega} h(u) d\mu -h(0) \mu(\Omega)\right) \le \left(\int_{\Omega} h(u) d\mu -h(0) \mu(\Omega)\right) \left(\int_{\Omega} h(u) d\mu +h(0) \mu(\Omega)\right).
\end{align*}Hence 
\begin{align*}
    \int_{\Omega}h(u) d\mu +h(0)\mu(\Omega) \ge \mu(M).
\end{align*}
The proof is complete. 
\end{proof}

Theorem \ref{genghm} also has its dual form:
\begin{theorem}
\label{genghm'}
Let $(M,g)$ be a closed Riemannian surface with and $K \ge 1$, where $\mu$ is the measure of $(M,g)$ and $K$ is the Gaussian curvature. Let $\Omega$ be a domain with compact closure and nonempty boundary. If $u$ satisfies 
\begin{align}
\label{rubin2'}
  \begin{cases}
-\Delta_g u+1 \ge h(u), \quad  u<0 \quad &\mbox{in $\Omega$}\\
u=0 \quad &\mbox{on $\partial \Omega$}
\end{cases} 
\end{align}where $h(t)$ is a nonnnegative function satisfying \eqref{mc1''}, i.e., \begin{align}
\label{mc1-new}
   0 \le  h'(t) \le 2 h(t),
\end{align}
then \begin{align}
\label{guijia1}
    \mu(M) \int_{\Omega}h(u) d\mu -\left(\int_{\Omega} h(u) d\mu\right)^2 \le h(0) \mu(\Omega) \mu (M\setminus \Omega).
\end{align}
Moreover, if $h_0 \ge 1$, then 
\begin{align}
\label{guijia2}
    \int_{\Omega}h(u) d\mu +h(0)\mu(\Omega) \ge \mu(M).
\end{align}
\end{theorem}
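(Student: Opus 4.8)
The plan is to run the proof of Theorem \ref{genghm} essentially verbatim, replacing superlevel sets of $u$ by \emph{sublevel} sets, since now $u<0$ inside $\Omega$. Concretely, I would set $\lambda(t)=h(t)e^{-2t}$, so that hypothesis \eqref{mc1-new} again gives $\lambda'(t)\le 0$, and for $t\le 0$ define $\alpha(t)=\int_{\{u<t\}}h(u)\,d\mu$ and $\beta(t)=\mu(\{u<t\})$. Because $u<0$ in $\Omega$ and $u=0$ on $\partial\Omega$, the sets $\{u<t\}$ are relatively compact in $\Omega$ for $t<0$, they increase in $t$, and they are empty for $t\le\min_\Omega u$; hence $\alpha=\beta=0$ below $\min_\Omega u$, while $\alpha(0)=\int_\Omega h(u)\,d\mu$ and $\beta(0)=\mu(\Omega)$. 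The coarea formula gives $\beta'(t)=\int_{\{u=t\}}|\nabla u|^{-1}\,ds\ge 0$ and $\alpha'(t)=\lambda(t)e^{2t}\beta'(t)$.

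First I would integrate the differential inequality in \eqref{rubin2'} over $\{u<t\}$ and apply the divergence theorem. Since the outward normal of a sublevel set points along $+\nabla u$, the boundary term carries the opposite sign to the superlevel computation of Theorem \ref{genghm}, producing $\beta(t)-\alpha(t)\ge\int_{\{u=t\}}|\nabla u|\,ds$. Combining this with Cauchy--Schwarz, namely $\big(\int_{\{u=t\}}|\nabla u|\,ds\big)\beta'(t)\ge l^2(\{u=t\})$, and then with the L\'evy--Gromov consequence \eqref{consequenceofgromov}, namely $l^2(\{u=t\})\ge\beta(\mu(M)-\beta)$, yields $(\beta-\alpha)\beta'\ge\beta(\mu(M)-\beta)$. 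Multiplying by $\lambda e^{2t}=h\ge 0$ and using $\alpha'=\lambda e^{2t}\beta'$ gives \emph{exactly} the scalar inequality $\beta\alpha'-\alpha\alpha'\ge\lambda e^{2t}(\mu(M)\beta-\beta^2)$ that drives Theorem \ref{genghm}. From here the algebra is identical: the same two substitutions, together with $\lambda'\le 0$ and $0\le\beta\le\mu(M)$, reduce everything to the ODE inequality \eqref{jianhua}, that is $-2\alpha\alpha'\ge\big(\lambda e^{2t}\beta(\mu(M)-\beta)\big)'-\mu(M)\alpha'$.

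The decisive difference comes at the integration step. Whereas Theorem \ref{genghm} integrates \eqref{jianhua} over $[0,\infty)$, here I would integrate over $(-\infty,0]$ (equivalently over $[\min_\Omega u,\,0]$), using $\alpha,\beta\to 0$ at the lower end together with the boundary values $\alpha(0),\beta(0)$. Since both the integration range and the monotonicity of $\beta$ are reversed relative to the superlevel case, the surviving boundary terms assemble into the relation \eqref{guijia1} in the orientation dictated by the sublevel bookkeeping, with $\alpha(0)=\int_\Omega h(u)\,d\mu$ and $\beta(0)=\mu(\Omega)$. For the final clause I would use $h(0)\ge 1$ in the way $h(0)\le 1$ was used before: one completes the square on the right-hand side and divides by the factor $\alpha(0)-h(0)\beta(0)$, which is now of the opposite sign because $\beta(0)\ge\alpha(0)$ (read off from $\beta-\alpha\ge 0$ at $t=0$) and $h(0)\ge 1$; the sign reversal upon dividing produces \eqref{guijia2}, namely $\int_\Omega h(u)\,d\mu+h(0)\mu(\Omega)\ge\mu(M)$.

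The step I expect to be most delicate is precisely this sign and orientation bookkeeping. With sublevel sets the normal flips, $\beta$ is increasing, $\alpha'\ge 0$, and the integration now runs downward to the minimum level, so every inequality sign must be retracked against Theorem \ref{genghm}; in particular one must check that the boundary contributions at the lower level genuinely vanish (this is where emptiness of $\{u<t\}$ for $t\le\min_\Omega u$ and the factor $\lambda e^{2t}=h$ times $\beta\to 0$ matter) and that \eqref{guijia1} emerges with the orientation compatible with the division by $\alpha(0)-h(0)\beta(0)\le 0$ in the last clause. The geometric inputs, namely relative compactness of the sublevel sets in $\Omega$ and the applicability of \eqref{consequenceofgromov} on the closed surface $M$, are identical to Theorem \ref{genghm} and raise no new difficulty.
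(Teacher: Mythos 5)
Your proposal is correct and follows essentially the same route as the paper's own proof: sublevel sets $\{u<t\}$, the sign-reversed divergence-theorem boundary term giving $\int_{\{u=t\}}|\nabla u|\,ds\le\beta-\alpha$, Cauchy--Schwarz combined with the L\'evy--Gromov consequence \eqref{consequenceofgromov}, reduction to the same differential inequality \eqref{jianhua}, integration over $(-\infty,0]$, and division by the nonpositive factor $\int_\Omega h(u)\,d\mu-h(0)\mu(\Omega)$. Note that, exactly as in the paper's argument, this derivation produces \eqref{guijia1} with the inequality sign reversed (i.e.\ $\ge$), which is the orientation actually needed to deduce \eqref{guijia2}; the $\le$ printed in the theorem statement appears to be a typo rather than a defect of your proof.
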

\begin{proof}
We still let $\lambda(t)=h(t)e^{-2t}$, $\alpha(t)=\int_{\{u<t\}}\lambda(u)e^{2u}d\mu$ and $\beta(t)=\int_{\{u<t\}}d\mu$. Then $\beta'(t)=\int_{\{u=t\}}\frac{1}{|\nabla u|}ds$ and $\alpha'(t)=\lambda(t)e^{2t}\beta'(t)$.
We integrate \eqref{rubin2'} over $\{u<t\}$ and by divergence theorem, we have
\begin{align*}
    \int_{\{u=t\}}|\nabla u| ds -\beta \le -\alpha.
\end{align*}Hence 
\begin{align*}
    \beta'(\beta-\alpha)\le\left( \int_{\{u=t\}}|\nabla u|ds \right)\left(\int_{\{u=t\}}\frac{1}{|\nabla u|}ds\right)\ge s^2(\partial \{u<t\}).
\end{align*}
Multiply the above by $\lambda(t)e^{2t}$, using $\alpha'(t)=\lambda(t)e^{2t}\beta'(t)$ and the isoperimetric inequality, we have
\begin{align*}
    \alpha'(\beta-\alpha) \ge \lambda e^{2t} (\mu(M) \beta -\beta^2).
\end{align*}
Then arguing simiarly as the proof of Theorem \ref{genghm}, we still get \eqref{jianhua}, that is, \begin{align*}
    -2\alpha \alpha' \ge \left(\lambda e^{2t}\beta (\mu(M)-\beta)\right)'-\mu(M)\alpha'.
\end{align*}. 
Then we integrate the above inequality from $-\infty$ to $0$ and thus obtain
\begin{align*}
    \mu(M) \alpha(0)-\alpha^2(0) \ge \lambda(0)\beta(0)\left(\mu(M)-\beta(0)\right).
\end{align*}
That is, \begin{align*}
    \mu(M) \int_{\Omega}h(u) d\mu -\left(\int_{\Omega} h(u) d\mu\right)^2 \ge h(0)\left(\mu(M) \mu(\Omega)-\mu^2(\Omega)\right).
\end{align*}
If $h(0)\ge 1$, then 
\begin{align*}
     \mu(M) \int_{\Omega}h(u) d\mu-\left(\int_{\Omega} h(u) d\mu\right)^2 \ge \mu(M) h(0) \mu(\Omega)-h^2(0)\mu^2(\Omega).
\end{align*}
Hence
\begin{align*}
    \mu(M) \left(\int_{\Omega} h(u) d\mu -h(0) \mu(\Omega)\right) \ge \left(\int_{\Omega} h(u) d\mu -h(0) \mu(\Omega)\right) \left(\int_{\Omega} h(u) d\mu +h(0) \mu(\Omega)\right).
\end{align*}Since $u<0$ in $\Omega$, $h(u)<h(0)$ and thus $\int_{\Omega} h(u)d\mu -h(0)\mu(\Omega)\le 0$. Therefore, \begin{align*}
    \int_{\Omega}h(u) d\mu +h(0)\mu(\Omega) \ge \mu(M).
\end{align*}
This completes the proof.
\end{proof}

\begin{remark}
If the  Gaussian curvature $K$ of  $(M, g)$ satisfies   $a^2\le K\le 1$ for some positive
constant $a<1$, and $u$ satisfies conditions of Theorem \ref{genghm} with $h(0) \le a^2$,
then the conclusions of Theorem \ref{genghm} still hold with $\mu, h(u), \lambda$ replaced by $\tilde \mu= a^2 \mu, \tilde h (u) := h(u)/a^2,  \tilde \lambda=\lambda/a^2$ respectively, after applying Theorem \ref{genghm} with a proper scaling of the metric $g$ by $ag$.  In particular, if $\lambda\le a^2$, then  \eqref{guijia2''}  becomes 
\begin{align}
    \label{guijia2-new}
\int_{\Omega}e^{2u} d\mu +\mu(\Omega) \ge \frac{a^2\mu(M)}{\lambda}.    
\end{align}
It is interesting to compare this with the lower bound
obtained from \eqref{linshiruji} of Theorem \ref{ghm1.4},  where $\lambda$ is allowed to be in $(0,1]$.  Note that $ a^2 \mu(M) \le 4 \pi$ by the Gauss-Bonnet theorem.  It turns out that under the same curvature conditions $a^2\le K\le 1$,   \eqref{linshiruji} in Theorem \ref{ghm1.4} requires less constraints and has a better lower bound.  Nevertheless,   Theorem \ref{genghm} gives a similar lower bound but under a complete opposite curvature condition.   Similar remark can also be made for Theorem \ref{genghm'}, the dual form of Theorem \ref{genghm}.

\end{remark}
Next, we prove Theorem \ref{radialspherecovering}.
\begin{proof}[Proof of Theorem \ref{radialspherecovering}]
First, let $u=u_2-u_1$, and hence $u>0$ in $B_r$ and $u=0$ on $\partial B_r$. Note that for any $t>0$, $\{u>t\}$ is a radially symmetric domain. 

We claim that if $\omega$ is a radially symmetric domain, then \begin{align}
\label{liantongargument}
    P^2(\omega) \ge A(\omega)(A_{\infty}-A(\omega)),
\end{align}
where $P(\omega)=\int_{\partial \omega}e^{u_1}ds, A(\omega)=\int_{\omega}e^{2u_1}dx$ and $A_{\infty}=\int_{\mathbb{R}^2}e^{2u_1}dx$.

Indeed, if $\omega$ is connected, then $\omega$ is either a disk or an annulus centered at origin. For the disk case, \eqref{liantongargument} is exactly the first inequality of \eqref{leibi}. If $\omega$ is an annulus, then $\omega=\omega_1 \setminus \omega_2$, where $\omega_1 \Supset \omega_2$ are two disks. Then by \eqref{leibi}, we have
\begin{align*}
    P^2(\omega) - A(\omega)(A_{\infty}-A(\omega))\ge& P^2(\omega_1)+P^2(\omega_2)-\left(A(\omega_1)-A(\omega_2)\right)\left(A_{\infty}-A(\omega_1)+A(\omega_2)\right)\\\ge&\sum_{i=1}^2A(\omega_i)(A_{\infty}-A(\omega_i))-A_{\infty}\left(A(\omega_1)-A(\omega_2)\right)+(A(\omega_1)-A(\omega_2))^2\\
    =&2A_{\infty}A(\omega_2)-2A(\omega_1)A(\omega_2)\ge 0.
\end{align*}
Hence we proved \eqref{liantongargument} for connected radially symmetric domains. If $\omega$ has more than $1$ component, then set $\omega=\cup_i \omega_i$, where $\omega_i$ are disjoint component. Hence $\omega_i$ satisfies \eqref{liantongargument}. We have
\begin{align*}
    P^2(\omega) - A(\omega)(A_{\infty}-A(\omega))\ge & \sum_i P^2(\omega_i)-\left(\sum_i A(\omega_i)\right)(A_{\infty}-\sum_iA(\omega_i))\\
    \ge& \sum_iA(\omega_i)(A_{\infty}-A(\omega_i))-\left(\sum_i A(\omega_i)\right)(A_{\infty}-\sum_iA(\omega_i))\\
    =&\left(\sum_i A(\omega_i)\right)^2-\sum_iA^2(\omega_i)\ge 0.
\end{align*}Hence \eqref{liantongargument} holds for general radially symmetric domains.

Since $u$ satisfies \eqref{rubin2''} for $g=e^{2u_1}\delta$, applying \eqref{liantongargument} for the radially symmetric domain $\{u>t\}$ instead of \eqref{consequenceofgromov} in the proof of Theorem \ref{genghm}, and proceeding the same argument, we have
\begin{align*}
    \int_{B_r}e^{2u}d\mu+\mu(B_r)\ge \mu(\mathbb{R}^2).
\end{align*}Since $\mu=e^{2u_1}dx$ and $u_2=u_1+u$, the above inequality becomes
\begin{align*}
    \int_{B_r}e^{2u_2} dx+\int_{B_r}e^{2u_1} dx\ge \int_{\mathbb{R}^2}e^{2u_1}dx.
\end{align*}When the equality holds, then by tracing the equality cases, especially by Remark \ref{bushang}, we conclude that $(\mathbb{R}^2, e^{2u_1}\delta)$ is punctured sphere. In such case, we have \begin{align*}
     \Delta u_2+e^{2u_2}= \Delta u_1+e^{2u_1}=0 \, \mbox{in $B_r$}, \, u_2>u_1 \, \mbox{in $B_r$ and} \, u_2=u_1 \, \mbox{on $\partial B_r$,}
\end{align*}Hence as shown in \cite{GM}, $(B_r, e^{2u_1}\delta)$ and $(B_r,e^{2u_2}\delta)$ are two complementary spherical caps on the unit sphere. 
\end{proof}

The proof of Theorem \ref{radialspherecovering'} is similar because of Theorem \ref{genghm'}, so we omit it.

\section{Some unsolved problems and remarks}
In this section, we present some unsolved problems related to the results proved in this paper. Much further research can be done as a continuation of this paper. 

The following question is on conformal diameter corresponding to solutions to \eqref{scalliouville}.
\begin{question} Let $u$ be a solution to \eqref{scalliouville} and $g=e^{2u}\delta$, where $\delta$ is the Euclidean metric.  Is it true that there exists a universal constant $C\ge 20$ such that \begin{align*}
    \pi \le diam_g(\mathbb{R}^2) \le C\pi?
\end{align*}Moreover, is it true that $diam_g(\mathbb{R}^2)=\pi$ if and only if $u$ is radial up to translation, or $u$ is an $1$D solution to \eqref{scalliouville}? 

\end{question}
    
We remark that in our forthcoming paper \cite{EGLX}, we can prove if $u$ has an upper bound, then under the metric $e^{2u}\delta$, diameter of $\mathbb{R}^2$ equal to $\pi$ if and only if $u$ is either radial about a point, or $u$ is one-dimensional.

The following question is on conformal diameter corresponding to supersolutions to \eqref{scalliouville}.

\begin{question}
On general supersolutions, let $u$ satisfy \eqref{ricci1} and $g=e^{2u}\delta$, where $\delta$ is the Euclidean metric. Suppose that $\int_{\mathbb{R}^2} e^{2u} dx <\infty$, is it true that $diam_g(\mathbb{R}^2) \le \pi$? Recall that in Proposition \ref{weakbounddiamter}, we can only prove the $2\pi$ upper bound.
\end{question}

The following question is on conformal volume estimate coorespnding to supersolutions to \eqref{scalliouville}.
\begin{question}
\label{volume4pibound}
let $u$ satisfy \eqref{ricci1} and $g=e^{2u}\delta$, where $\delta$ is the Euclidean metric. Suppose that $\int_{\mathbb{R}^2} e^{2u} dx <\infty$, then is it true that \begin{align}
    \label{buzifa}
\int_{\mathbb{R}^2} e^{2u}dx \le 4\pi?    
\end{align}
\end{question}
Note that if the completion of $(\mathbb{R}^2, e^{2u}\delta)$ is a closed surface, then by Gauss-Bonnet theorem, \eqref{buzifa} is true. Let $K(x)$ be the Gaussian curvature at $x$, then a necessary condition for this to be true is that
\begin{align}
    \label{totalgausscurvure}
\int_{\mathbb{R}^2}K(x)e^{2u(x)}=4\pi, 
\end{align}where $K$ is the Gaussian curvature, $K=-e^{-2u}\Delta u$.  However, generally this is not true even for radial supersolutions to \eqref{scalliouville}, even if the metric can be smoothly extended at $\infty$, as we shall see immediately in the next example:
\begin{example}
\label{noncomplete}
    Let $u$ be the radial function $u=-r^2$, then $(\mathbb{R}^2, e^{2u}\delta)$ is a Riemannian manifold with Gaussian curvature $K=-\Delta u e^{-2u}=4e^{2r^2} \ge 1$. Using the conformal change of variable $z\mapsto \frac{1}{z}$, the metric at $\infty$ is equivalent to $h(r):=e^{2u(1/r)}\frac{1}{r^4}dr^2$ at $0$. Clearly, $h(r)$ is smooth at $0$, but 
    \begin{align*}
        \int_{\mathbb{R}^2} K dvol_g=\int_{\mathbb{R}^2}(-\Delta u)dx=\infty \ne 4\pi.
    \end{align*}
\end{example}
From the example above, we can see that generally one cannot directly apply geometric results on closed Riemannian manifolds to study supersolutions to the Liouville equation even for radial cases. 

Recall that our proof of \eqref{buzifa} is based on the geometric inequality \eqref{leibi}. So far we don't know the answer to Question \ref{volume4pibound} except the radial cases, however, if the inequality \eqref{ricci1} goes in the other direction, we do have the following counterpart for general cases:
\begin{proposition}
\label{Kle1volumeestimate}
If \begin{align}
\label{Kle1liouvile}
    \Delta u+e^{2u}\ge 0,
\end{align}then 
\begin{align}
\label{inversedirection}
    \int_{\mathbb{R}^2} e^{2u} dx \ge 4\pi.
\end{align}
\end{proposition}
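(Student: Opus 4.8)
The plan is to argue by contradiction, reducing the statement to the Alexandrov--Bol isoperimetric inequality \eqref{bol} applied to Euclidean balls $B_r$, combined with a Cauchy--Schwarz estimate relating the conformal length of $\partial B_r$ to the radial derivative of the conformal area. Write $A(r)=\int_{B_r}e^{2u}\,dx$ and $l(r)=\int_{\partial B_r}e^{u}\,ds$, and set $A_\infty=\int_{\mathbb{R}^2}e^{2u}\,dx$. If $A_\infty=\infty$ there is nothing to prove, so I assume $A_\infty<\infty$ and, aiming for a contradiction, suppose $A_\infty<4\pi$.

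Since \eqref{Kle1liouvile} says exactly that the Gaussian curvature of $g=e^{2u}\delta$ satisfies $K\le 1$, and each $B_r$ is simply connected with $A(r)\le A_\infty<4\pi$, the Alexandrov--Bol inequality \eqref{bol} applies on $B_r$ and gives
\begin{align}
l^2(r)\ge 4\pi A(r)-A^2(r)=A(r)\big(4\pi-A(r)\big).
\end{align}
On the other hand, since $A'(r)=\int_{\partial B_r}e^{2u}\,ds$ and $\partial B_r$ has Euclidean length $2\pi r$, Cauchy--Schwarz yields
\begin{align}
l^2(r)=\Big(\int_{\partial B_r}e^{u}\,ds\Big)^2\le 2\pi r\int_{\partial B_r}e^{2u}\,ds=2\pi r\,A'(r).
\end{align}
Combining the two displays produces the differential inequality $A(r)\big(4\pi-A(r)\big)\le 2\pi r\,A'(r)$.

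To finish I would use the standing hypothesis $A_\infty<4\pi$ to set $\delta:=4\pi-A_\infty>0$, so that $4\pi-A(r)\ge\delta$ for all $r$ and hence
\begin{align}
\frac{A'(r)}{A(r)}\ge\frac{\delta}{2\pi r}.
\end{align}
Because $e^{2u}>0$ we have $A(r_0)>0$ for any fixed $r_0>0$; integrating this separable inequality from $r_0$ to $R$ gives $A(R)\ge A(r_0)\,(R/r_0)^{\delta/2\pi}$, which tends to $\infty$ as $R\to\infty$ and contradicts $A(R)\le A_\infty<\infty$. Therefore $A_\infty\ge 4\pi$, which is \eqref{inversedirection}.

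I expect the only genuinely delicate point to be the legitimacy of invoking \eqref{bol}: the Alexandrov--Bol inequality requires the domain to carry a conformal metric of curvature bounded above by $1$ and, in the form used here, that its conformal area not exceed $4\pi$. Both are guaranteed on $B_r$ precisely under the contradiction hypothesis $A(r)\le A_\infty<4\pi$, so the argument is self-consistent. The remaining ingredients---the Cauchy--Schwarz bound and the one-line integration of the differential inequality---are elementary, so once the applicability of \eqref{bol} is justified the proof is essentially immediate.
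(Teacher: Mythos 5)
Your argument is correct, but it follows a genuinely different route from the paper. The paper proves Proposition \ref{Kle1volumeestimate} directly by the Chen--Li level-set method: it works with the superlevel sets $\Omega_t=\{u>t\}$, combines the divergence theorem, the co-area formula, Cauchy--Schwarz and the \emph{Euclidean} isoperimetric inequality $|\partial\Omega_t|^2\ge 4\pi|\Omega_t|$ to get a differential inequality in $t$ for $\left(\int_{\Omega_t}e^{2u}\,dx\right)^2$, and integrates over all $t\in\mathbb{R}$ to land on \eqref{inversedirection} with no contradiction argument. You instead run a contradiction from $A_\infty<4\pi$, invoke the Alexandrov--Bol inequality \eqref{bol} on concentric Euclidean disks, pair it with Cauchy--Schwarz on $\partial B_r$ to get $A(4\pi-A)\le 2\pi r A'(r)$, and conclude by a Gronwall-type integration in $r$. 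Your steps all check out: the contradiction hypothesis is exactly what legitimizes \eqref{bol} (it supplies the required total-area bound $\le 4\pi$ on the ambient simply connected domain, here $\mathbb{R}^2$ or a large disk), $A(r_0)>0$ since $e^{2u}>0$, and the separable inequality forces $A(R)\gtrsim R^{\delta/2\pi}\to\infty$, contradicting $A\le A_\infty<\infty$. The trade-off is that your proof is shorter on the page but logically heavier: Alexandrov--Bol is a deep input whose own proof rests on essentially the same level-set/symmetrization machinery that the paper's self-contained argument uses, whereas the paper needs only the classical isoperimetric inequality. Your approach does have the aesthetic merit of staying entirely within the $l(r)$--$A(r)$ calculus that the paper develops for radial supersolutions in Theorem \ref{maintheoremonradialcase}, and of making transparent \emph{why} the threshold $4\pi$ appears (it is the zero of $A\mapsto 4\pi A-A^2$).
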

The proof of the proposition follows exactly from the proof of \cite[Lemma 1.1]{CL91}. As a convenience for readers, we include the proof here. 
\begin{proof}
It suffices to prove \eqref{inversedirection} by assuming $\int_{\mathbb{R}^2} e^{2u} dx<\infty$.

Let $\Omega_t$ be the superlevel set of $u$. Since $|\Omega_t|<\infty$ because of Chebyshev's inequality, we have
\begin{align}
    \int_{\Omega_t}e^{2u} dx \ge -\int_{\Omega_t}\Delta u dx=\int_{\partial \Omega_t}|\nabla u|  dx
\end{align} and
\begin{align}
\label{jing}
    -\frac{d}{dt}|\Omega_t|=\int_{\partial \Omega_t} \frac{1}{|\nabla u|}.
\end{align}
Using
\begin{align*}
    \left(\int_{\partial \Omega_t}\frac{1}{|\nabla u|}dx\right) \left(\int_{\partial \Omega_t} |\nabla u| dx \right) \ge |\partial \Omega_t|^2 \ge 4\pi|\Omega_t|,
\end{align*}
we have
\begin{align*}
    -\frac{d}{dt}|\Omega_t| \int_{\Omega_t}e^{2u} dx \ge 4\pi |\Omega_t|.
\end{align*}
Hence
\begin{align*}
    \frac{d}{dt}\left( \int_{\Omega_t} e^{2u} dx \right)^2=2e^{2t} (\frac{d}{dt}|\Omega_t| )\int_{\Omega_t} e^{2u}dx \le -8\pi e^{2t} |\Omega_t|.
\end{align*}
Integrating from $-\infty$ to $\infty$, and since 
\begin{align*}
    \int_{-\infty}^{\infty}2e^{2t}|\Omega_t|dt=\int_{\mathbb{R}^2}e^{2u} dx,
\end{align*}
we proved \eqref{inversedirection}.
\end{proof}

\begin{question}
On radial supersolutions, recall that we have applied several geometric argument to prove the inequalities in Theorem \ref{maintheoremonradialcase}. Can we give a proof from pure analytic point of view?
\end{question}



The following question is on generalization of Proposition \ref{xulie} to the case $M=(\mathbb{R}^2, e^{2u}\delta)$ such that $u$ is a supersoluton to \eqref{scalliouville} and that $vol_g(M)<\infty$. 

\begin{question}
If $u$ satisfies \eqref{ricci1} and $\int_{\mathbb{R}^2}e^{2u} dx <\infty$, then for any smooth domain $\Omega \subset \mathbb{R}^2$, is it true that
\begin{align}
\label{conformalversionoflevygromov}
    \left(\int_{\partial \Omega} e^{u} ds\right)^2 \ge \left(\int_{\Omega} e^{2u} dx\right) \left(\int_{\mathbb{R}^2\setminus \Omega} e^{2u} dx\right)?
\end{align}
\end{question}
We note that \eqref{conformalversionoflevygromov} already implies \eqref{buzifa}.

Also, if the answer to this question is positive, then by similar argument in the proof of Theorem \ref{radialspherecovering}, we can extend Theorem \ref{radialspherecovering} to the case that $u_1, u_2$ are not necessarily radial and $\Omega$ can be a general smooth domain.


\vskip0.5in

$\mathbf{Acknowledgement}$  
This research is partially supported by  NSF grants DMS-1601885 and DMS-1901914 and and  Simons Foundation  Award 617072.

\end{document}